 \newtheorem{theorem}{Theorem}
 \newtheorem{proposition}[theorem]{Proposition}
 \newtheorem{definition}[theorem]{Definition}
 \newtheorem{lemma}[theorem]{Lemma}
 \newtheorem{corollary}[theorem]{Corollary}
 \newtheorem{remark}[theorem]{Remark}
 \newtheorem{assumption}{Assumption}
  \newenvironment{proof}{\small{\bf Proof.}}%
  {\hfill$\Box$\normalsize\bigskip}
\newcommand{\eqsepv}{\; , \enspace}       
\newcommand{\eqfinv}{\; ,}                
\newcommand{\eqfinp}{\; .}
\renewcommand{\bar}{\overline}
\renewcommand{\tilde}{\widetilde}
\newcommand{\mtext}[1]{\,\mbox{#1}\,} %text in maths
\newcommand{\sequence}[2]{\left\{#1\right\}_{#2}}           % Suite
\newcommand{\proscal}[2]{\left\langle#1\:,#2\right\rangle}  % Produit scalaire
\newcommand{\np}[1]{(#1)}                                   % Parenth\`{e}se normal
\newcommand{\bp}[1]{\big(#1\big)}                           % Parenth\`{e}se big
\newcommand{\Bp}[1]{\Big(#1\Big)}                           % Parenth\`{e}se Big
\newcommand{\bgp}[1]{\bigg(#1\bigg)}                        % Parenth\`{e}se bigg
\newcommand{\Bgp}[1]{\Bigg(#1\Bigg)}                        % Parenth\`{e}se Bigg
\newcommand{\bc}[1]{\big[#1\big]}                           % Crochet big
\newcommand{\Bc}[1]{\Big[#1\Big]}                           % Crochet Big
\newcommand{\ba}[1]{\big\{#1\big\}}                         % Accolade big
\newcommand{\RR}{{\mathbb R}} %"ensemble des reels"   
\newcommand{\NN}{{\mathbb N}} %"ensemble des entiers naturels"  
\newcommand{\AAA}{{\mathbb A}} 
\newcommand{\BB}{{\mathbb B}} 
\newcommand{\EE}{{\mathbb E}} %"symbole d'esperance mathematique"
\newcommand{\control}{u}
\newcommand{\Control}{U}
\newcommand{\CONTROL}{{\mathbb U}} 
\newcommand{\state}{x}
\newcommand{\State}{X}
\newcommand{\STATE}{{\mathbb X}}
\newcommand{\uncertain}{w}
\newcommand{\Uncertain}{W}
\newcommand{\UNCERTAIN}{{\mathbb W}} 
\newcommand{\dynamics}{f}
\newcommand{\horizon}{T}
\newcommand{\coutint}{L}                                    % Co\^{u}t int\'{e}gral
\newcommand{\coutfin}{K}                                    % Co\^{u}t final
\newcommand{\Value}{V}
\newcommand{\Hamiltonian}{{\cal H}}
\newcommand{\prbt}{\mathbb{P}}                              % proba  du triplet
\newcommand{\trib}{{\cal F}}
\newcommand{\epro}{(\Omega,\trib,\prbt)}
\newcommand{\va}[1]{\mathbf{#1}}
\def\stackops#1#2#3{%
  \mathrel{\vbox{\offinterlineskip\ialign{%
        \hfil##\hfil\cr
        $#1$\cr
        \noalign{\kern#3}
        $#2$\cr}}}}
\def\plusdot{\stackops{\cdot}{+}{-2.5ex}}
\newcommand{\LowPlus}{\plusdot}  
\newcommand{\UppPlus}{\dotplus}
\newcommand{\PRIMAL}{{\mathbb X}}
\newcommand{\primal}{x}
\newcommand{\DUAL}{{\PRIMAL}^{\sharp}}
\newcommand{\dual}{{\primal}^{\sharp}}
\newcommand{\PRIMALBIS}{{\mathbb Y}}
\newcommand{\primalbis}{y}
\newcommand{\DUALBIS}{{\PRIMALBIS}^{\sharp}}
\newcommand{\dualbis}{{\primalbis}^{\sharp}}
\newcommand{\dualstate}{\state^{\sharp}}
\newcommand{\Dualstate}{\State^{\sharp}}
\newcommand{\DUALSTATE}{\STATE^{\sharp}}
\newcommand{\barRR}{[-\infty,+\infty]} 
\newcommand{\bRR}{[0,+\infty]} 
\newcommand{\dom}{\mathrm{dom}}
\newcommand{\sri}{\mathrm{sri}}
\newcommand{\fonctionprimal}{f} 
\newcommand{\fonctionprimalbis}{g} 
\newcommand{\fonctiontrois}{h} 
\newcommand{\coupling}{c}
\newcommand{\couplingbis}{d}
\newcommand{\couplingter}{\Gamma}
\newcommand{\SumCoupling}[2]{#1 \LowPlus #2} % Product coupling
\newcommand{\kernel}{{\cal K}} 
\newcommand{\convolution}{{\cal I}} 
\newcommand{\tildeconvolution}{\overline{\convolution}}
\newcommand{\tildetildeconvolution}{\underline{\convolution}}
\newcommand{\Exchange}{{\cal E}}
\newcommand{\LFM}[1]{#1^{\star}}
\newcommand{\LFMbi}[1]{#1^{\star\star}}
\newcommand{\SFM}[2]{#1^{#2}}
\newcommand{\SFMbi}[2]{#1^{#2{#2}'}}
\newcommand{\espaceL}[1]{\mathbb{L}^{#1}\bp{\epro,\RR^{n_{\STATE}}}}
\title{Fenchel-Moreau Conjugation Inequalities\\
with Three Couplings\\
  and Application to Stochastic Bellman Equation}
\author{Jean-Philippe Chancelier and Michel De Lara\footnote{delara@cermics.enpc.fr}
\\ Universit\'{e} Paris-Est, CERMICS (ENPC) }
\begin{document}

\maketitle

\begin{abstract}
  Given two couplings between ``primal'' and ``dual'' sets, 
  we prove a general implication 
  that relates an inequality involving ``primal'' sets 
  to a reverse inequality involving the ``dual'' sets.
  More precisely,
  let be given two ``primal'' sets $\PRIMAL$, $\PRIMALBIS$
and two ``dual'' sets $\DUAL$, $\DUALBIS$, 
together with two {coupling} functions
  \( \PRIMAL \overset{\coupling}{\leftrightarrow} \DUAL \) and
  \( \PRIMALBIS \overset{\couplingbis}{\leftrightarrow} \DUALBIS \).
  We define a new coupling \( \SumCoupling{\coupling}{\couplingbis} \)
  between the ``primal'' product set~$\PRIMAL \times \PRIMALBIS$ 
  and the ``dual'' product set $\DUAL \times \DUALBIS$.
  Then, we consider any bivariate function 
  \( \kernel : \PRIMAL \times \PRIMALBIS \to \barRR \)
  and univariate functions 
  \( \fonctionprimal : \PRIMAL  \to \barRR \) and \( \fonctionprimalbis : \PRIMALBIS  \to \barRR \), 
  all defined on the ``primal'' sets. 
  We prove that \( \fonctionprimal\np{\primal} \geq \inf_{\primalbis \in \PRIMALBIS} 
  \Bp{ \kernel\np{\primal, \primalbis} \UppPlus 
    \fonctionprimalbis\np{\primalbis} } \)
  \(  \Rightarrow 
  \SFM{\fonctionprimal}{\coupling}\np{\dual} \leq \inf_{\dualbis \in \DUALBIS} 
  \Bp{ \SFM{\kernel}{\SumCoupling{\coupling}{\couplingbis}}\np{\dual,\dualbis} 
    \UppPlus \SFM{\fonctionprimalbis}{-\couplingbis}\np{\dualbis} } \),
  where we stress that the Fenchel-Moreau conjugates 
  \( \SFM{\fonctionprimal}{\coupling} \) and 
  \( \SFM{\fonctionprimalbis}{-\couplingbis}\) 
  are not necessarily taken with the same coupling. 
We study the equality case. 
  We display several  applications.
  We provide a new formula for the Fenchel-Moreau conjugate of a
  generalized inf-convolution.
  We obtain formulas with partial Fenchel-Moreau conjugates.
  Finally, we consider the Bellman equation in stochastic dynamic 
  programming and we  provide a ``Bellman-like'' 
  equation for the Fenchel conjugates of the value functions.
\end{abstract}

% \pagebreak
% \tableofcontents
% \pagebreak

\section{Introduction}

In convex analysis, the Fenchel conjugacy plays a central role.
It is involved in many equalities and inequalities, 
like the well known Fenchel (in)equalities
or the Fenchel conjugate of an inf-convolution.
The classical Fenchel conjugate was extended by J.~J. Moreau~\cite{Moreau:1970}, 
by replacing the bilinear pairing,
between a vector space and its dual,
with a more general coupling. 
This gives the so-called Fenchel-Moreau conjugate
(see Chapter~11L and the Commentary in 
\cite{Rockafellar-Wets:1998} with a brief historical perspective
and references).
In abstract convexity~\cite{MR1834382,MR1461544,MR1442257,Martinez-Legaz:2005}, 
affine functions are replaced by another class
of functions (related to the coupling), and so are convex functions
(replaced by so-called abstract convex functions), by taking the supremum. 
In this way, generalized Fenchel conjugation formulas are obtained, 
as well as duality for abstract convex functions.
Generalized Fenchel conjugation also appears in the dual formulation 
of optimal transport problems~\cite{MR2279394,Santambrogio:2015}. 
Calculus with different couplings can be found in~\cite{Cabot-Jourani-Thibault:2018}.

In this paper, we provide a main Fenchel-Moreau conjugation 
inequality with three couplings, and applications.
In Sect.~\ref{A_duality_formula_with_multiple_Fenchel-Moreau_conjugates},
we establish our main inequality. Then, we provide sufficient conditions
for the equality case. 
In Sect.~\ref{Applications}, we display several  applications.
First, we provide a definition of a generalized inf-convolution,
and new formulas for its Fenchel-Moreau conjugate (inequality and equality).
Second, we obtain formulas with partial Fenchel-Moreau conjugates.
Finally, we consider the Bellman equation in stochastic dynamic 
programming and we  provide a ``Bellman-like'' 
equation for the Fenchel conjugates of the value functions.

\section{Duality inequality with three Fenchel-Moreau conjugates}
\label{A_duality_formula_with_multiple_Fenchel-Moreau_conjugates}

Given two couplings between ``primal'' and ``dual'' sets, 
we prove a general implication 
that relates an inequality involving ``primal'' sets 
to a reverse inequality involving the ``dual'' sets.

In what follows, we rely upon 
background on J.~J. Moreau lower and upper additions and 
on Fenchel-Moreau conjugacy with respect to a coupling, that 
can be found in Appendix~\ref{Appendix}.

\subsection{Main duality inequality}

Let be given two ``primal'' sets $\PRIMAL$, $\PRIMALBIS$
and two ``dual'' sets $\DUAL$, $\DUALBIS$, 
together with two \emph{coupling} functions
\begin{equation}
  \coupling : \PRIMAL \times \DUAL \to \barRR \eqsepv
  \couplingbis : \PRIMALBIS \times \DUALBIS \to \barRR \eqfinp 
\end{equation}
We will call $\PRIMAL$ and $\PRIMALBIS$ ``primal'' sets,
whereas $\DUAL$ and $\DUALBIS$ are ``dual'' sets.

We define the \emph{sum coupling} \( \SumCoupling{\coupling}{\couplingbis} \) 
--- coupling the ``primal'' product set~$\PRIMAL \times \PRIMALBIS$ 
with the ``dual'' product set $\DUAL \times \DUALBIS$ ---
by
\begin{subequations}
  \begin{align}
    \SumCoupling{\coupling}{\couplingbis}: 
    \bp{\PRIMAL \times \PRIMALBIS} \times 
    \bp{\DUAL \times \DUALBIS} 
    & \to\barRR \eqfinv \\
    \bp{ \np{\primal,\primalbis} ,\np{\dual,\dualbis}}\hspace{0.6cm}
    &\mapsto 
      \coupling\np{\primal,\dual} \LowPlus 
      \couplingbis\np{\primalbis,\dualbis} 
      \eqfinp 
  \end{align}
  \label{eq:product_coupling}
\end{subequations}
With any bivariate function 
\( \kernel : \PRIMAL \times \PRIMALBIS \to \barRR \), 
defined on the ``primal'' product set~$\PRIMAL \times \PRIMALBIS$, 
we associate the conjugate, with respect to the 
coupling \( \SumCoupling{\coupling}{\couplingbis} \), defined 
on the ``dual'' product set $\DUAL \times \DUALBIS$, by:
\begin{subequations}
  \begin{align}
    \SFM{\kernel}{\SumCoupling{\coupling}{\couplingbis}}\np{\dual,\dualbis} 
    &=     \sup_{\primal \in \PRIMAL, \primalbis \in \PRIMALBIS} 
      \Bp{ \bp{\SumCoupling{\coupling}{\couplingbis}}%
      \bp{\np{\primal,\primalbis},\np{\dual,\dualbis}} 
      \LowPlus \bp{ - \kernel \np{\primal,\primalbis}} } \\
    &=\sup_{\primal \in \PRIMAL, \primalbis \in \PRIMALBIS} 
      \Bp{ 
      \coupling\np{\primal,\dual} \LowPlus 
      \couplingbis\np{\primalbis,\dualbis}
      \LowPlus \bp{ - \kernel \np{\primal,\primalbis}}} \\
    & \qquad \forall \np{\dual,\dualbis} \in \DUAL \times \DUALBIS \eqfinp \nonumber
  \end{align}
  \label{eq:product_conjugate}
\end{subequations}

\begin{figure}
  \centering
  \[
  \begin{tikzcd}[column sep=huge, labels={font=\everymath\expandafter{\the\everymath\textstyle}}]
    \PRIMAL \arrow[r, " \coupling " ]  \arrow{d}[description]{\quad} 
    & \DUAL \arrow[l] \arrow{d}[description]{\quad} \\[1.5cm]
    \PRIMALBIS \arrow[r, "\couplingbis " ] \arrow{u}[description]{\kernel} 
    & \DUALBIS \arrow[l] \arrow{u}[description]{\SFM{\kernel}{\SumCoupling{\coupling}{\couplingbis}}}
  \end{tikzcd}
  \]
  \caption{A kernel~$\kernel$, two couplings $\coupling$ and $\couplingbis$, and a
    new kernel~$\SFM{\kernel}{\protect\SumCoupling{\coupling}{\couplingbis}}$}
\end{figure}

% \[
% \begin{tikzcd}[column sep=huge, labels={font=\everymath\expandafter{\the\everymath\textstyle}}]
%   \extendedR^\PRIMAL \arrow[r, " \fonctionprimal \mapsto \fonctionprimal^\coupling " ] 
%   & \extendedR^{\DUAL} \\[1.5cm]
%   \extendedR^\PRIMALBIS \arrow{u}[description]{\fonctionprimalbis \mapsto \fonctionprimalbis^\kernel} \arrow[r, "\fonctionprimalbis \mapsto \fonctionprimalbis^\couplingbis " ] % \arrow[ur]
%   & \extendedR^{\DUALBIS} \arrow{u}[description]{\fonctionprimalbis \mapsto \fonctionprimalbis_{\tilde\kernel}}
% \end{tikzcd}
% \]

In what follows, we will call the function~$\kernel$ a \emph{kernel}
(or a \emph{potential}). Indeed, consider the expression in 
the left hand side assumption in~\eqref{eq:main}.
If we translate it from the $(\min,+)$ algebra
to the usual $(+,\times)$ algebra, it 
stands as an integration with respect to a kernel.

\begin{theorem}
  For any bivariate function 
  \( \kernel : \PRIMAL \times \PRIMALBIS \to \barRR \)
  and univariate functions 
  \( \fonctionprimal : \PRIMAL  \to \barRR \) and \( \fonctionprimalbis : \PRIMALBIS  \to \barRR \), 
  all defined on the ``primal'' sets, 
  we have that
  \begin{multline}
    \fonctionprimal\np{\primal} \geq \inf_{\primalbis \in \PRIMALBIS} 
    \Bp{ \kernel\np{\primal, \primalbis} \UppPlus \fonctionprimalbis\np{\primalbis} } 
    \eqsepv \forall \primal \in \PRIMAL 
    \Rightarrow \\
    \SFM{\fonctionprimal}{\coupling}\np{\dual} \leq \inf_{\dualbis \in \DUALBIS} 
    \Bp{ \SFM{\kernel}{\SumCoupling{\coupling}{\couplingbis}}%
      \np{\dual,\dualbis} 
      \UppPlus \SFM{\fonctionprimalbis}{-\couplingbis}\np{\dualbis} } 
    \eqsepv \forall \dual \in \DUAL 
    \eqfinp
    \label{eq:main}
  \end{multline}
  \label{th:main}
\end{theorem}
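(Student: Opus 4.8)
The plan is to transport the ``primal'' inequality to the ``dual'' side by applying the Fenchel-Moreau conjugation $\SFM{\cdot}{\coupling}$, which is \emph{antitone}, and then to unfold the resulting expression using the calculus rules for the lower and upper additions recalled in Appendix~\ref{Appendix}. First I would introduce the function $g : \PRIMAL \to \barRR$ defined by $g\np{\primal} = \inf_{\primalbis \in \PRIMALBIS} \Bp{\kernel\np{\primal,\primalbis} \UppPlus \fonctionprimalbis\np{\primalbis}}$, so that the left-hand side assumption in~\eqref{eq:main} reads $\fonctionprimal \geq g$ pointwise on $\PRIMAL$. Since $\SFM{\fonctionprimal}{\coupling}\np{\dual} = \sup_{\primal \in \PRIMAL} \Bp{\coupling\np{\primal,\dual} \LowPlus \np{-\fonctionprimal\np{\primal}}}$ and $\LowPlus$ is nondecreasing, the conjugation reverses the order, whence $\SFM{\fonctionprimal}{\coupling} \leq \SFM{g}{\coupling}$. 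It therefore suffices to bound $\SFM{g}{\coupling}$ from above by the right-hand side of~\eqref{eq:main}.

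Second, I would compute $\SFM{g}{\coupling}$. Using the rule $-\np{a \UppPlus b} = \np{-a} \LowPlus \np{-b}$ and the fact that negation turns the infimum over $\primalbis$ into a supremum, I get $-g\np{\primal} = \sup_{\primalbis \in \PRIMALBIS} \Bp{\np{-\kernel\np{\primal,\primalbis}} \LowPlus \np{-\fonctionprimalbis\np{\primalbis}}}$. Distributing the constant term $\coupling\np{\primal,\dual} \LowPlus$ across the supremum over $\primalbis$ (lower addition commutes with suprema) and merging the two suprema yields
\begin{equation*}
  \SFM{g}{\coupling}\np{\dual} = \sup_{\primal \in \PRIMAL, \primalbis \in \PRIMALBIS} \Bp{\coupling\np{\primal,\dual} \LowPlus \np{-\kernel\np{\primal,\primalbis}} \LowPlus \np{-\fonctionprimalbis\np{\primalbis}}} \eqfinp
\end{equation*}

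Third, I would fix $\dual \in \DUAL$ and an arbitrary $\dualbis \in \DUALBIS$, and bound the generic summand from above. For any $\np{\primal,\primalbis}$, the definitions~\eqref{eq:product_conjugate} of $\SFM{\kernel}{\SumCoupling{\coupling}{\couplingbis}}\np{\dual,\dualbis}$ and of $\SFM{\fonctionprimalbis}{-\couplingbis}\np{\dualbis}$ as suprema give the two lower bounds $\SFM{\kernel}{\SumCoupling{\coupling}{\couplingbis}}\np{\dual,\dualbis} \geq \coupling\np{\primal,\dual} \LowPlus \couplingbis\np{\primalbis,\dualbis} \LowPlus \np{-\kernel\np{\primal,\primalbis}}$ and $\SFM{\fonctionprimalbis}{-\couplingbis}\np{\dualbis} \geq \np{-\couplingbis\np{\primalbis,\dualbis}} \LowPlus \np{-\fonctionprimalbis\np{\primalbis}}$. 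By monotonicity of the upper addition $\UppPlus$, it then remains to check the elementary scalar inequality $\Bp{A \LowPlus B} \UppPlus \Bp{\np{-B} \LowPlus D} \geq A \LowPlus D$ in $\barRR$, applied with $A = \coupling\np{\primal,\dual} \LowPlus \np{-\kernel\np{\primal,\primalbis}}$, $B = \couplingbis\np{\primalbis,\dualbis}$ and $D = -\fonctionprimalbis\np{\primalbis}$: this reproduces exactly the summand above. Taking the supremum over $\np{\primal,\primalbis}$ gives $\SFM{g}{\coupling}\np{\dual} \leq \SFM{\kernel}{\SumCoupling{\coupling}{\couplingbis}}\np{\dual,\dualbis} \UppPlus \SFM{\fonctionprimalbis}{-\couplingbis}\np{\dualbis}$; since $\dualbis$ is arbitrary, taking the infimum over $\dualbis$ and chaining with $\SFM{\fonctionprimal}{\coupling} \leq \SFM{g}{\coupling}$ yields~\eqref{eq:main}.

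I expect the scalar inequality $\Bp{A \LowPlus B} \UppPlus \Bp{\np{-B} \LowPlus D} \geq A \LowPlus D$ to be the main obstacle, as it is the step where the asymmetry between the couplings becomes essential: the inner additions must be \emph{lower} additions (so that the $\pm B$ terms can cancel), while the outer combination must be an \emph{upper} addition for the inequality to survive when $A$ or $D$ equals $\pm\infty$. I would establish it by a short case analysis on whether $B$ is finite, $+\infty$ or $-\infty$, the only delicate configuration being $A = +\infty$, $D = -\infty$ with $B$ finite, where the outer $\UppPlus$ returns $+\infty$ whereas the target $A \LowPlus D$ returns $-\infty$; the verification rests only on the identities $-\np{a \UppPlus b} = \np{-a}\LowPlus\np{-b}$ and $a \LowPlus b \leq a \UppPlus b$ from Appendix~\ref{Appendix}.
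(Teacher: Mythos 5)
Your proof is correct, but it follows a genuinely different route from the paper's. The paper's proof of Theorem~\ref{th:main} is a long chain that (i)~replaces $\fonctionprimalbis$ by its biconjugate $\SFMbi{\fonctionprimalbis}{(-\couplingbis)}$ via~\eqref{eq:Fenchel-Moreau_biconjugate_inequality}, (ii)~unfolds that biconjugate as a supremum over~$\dualbis$, and (iii)~concludes with a $\sup\inf\leq\inf\sup$ swap between~\eqref{eq:main_supinf} and~\eqref{eq:main_infsup}. You use neither the biconjugate nor any minimax step: after the antitonicity reduction and the exact computation of the conjugate of your auxiliary function~$g$ as a double supremum (both steps match the opening of the paper's argument), you fix an arbitrary~$\dualbis$, bound each summand by $\SFM{\kernel}{\SumCoupling{\coupling}{\couplingbis}}\np{\dual,\dualbis}\UppPlus\SFM{\fonctionprimalbis}{-\couplingbis}\np{\dualbis}$ through the scalar inequality $\np{A\LowPlus B}\UppPlus\bp{\np{-B}\LowPlus D}\geq A\LowPlus D$, and only then take the infimum over~$\dualbis$. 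That scalar inequality does hold for all $A,B,D\in\barRR$ (it also follows from the paper's toolbox: $A\leq\np{A\LowPlus B}\UppPlus\np{-B}$ by~\eqref{eq:lower_upper_addition_comparisons}, then monotonicity of~$\LowPlus$ and~\eqref{eq:lower_upper_addition_inequality}), though your localization of the ``delicate'' configuration is slightly off --- the outer~$\UppPlus$ is genuinely needed when $B=\pm\infty$ with $A,D$ finite (an outer~$\LowPlus$ would then fail), not only when $A=+\infty$ and $D=-\infty$ with $B$ finite; your announced full case analysis on~$B$ covers this anyway. What your route buys is brevity and the automatic avoidance of the minimax swap; what it loses is the step-by-step bookkeeping of which inequalities become equalities, which the paper exploits in Sect.~\ref{The_duality_equality_case} to reduce the equality case to a single strong-duality hypothesis. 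Your argument is in fact closest in spirit to the paper's second proof of the theorem, given inside the proof of Corollary~\ref{cor:main==extended_couplings}, which also avoids the minimax step but factors through the marginal kernel~$\kernel_{\dual}$ of~\eqref{eq:marginal_kernel} and the generalized Fenchel inequality~\eqref{eq:Fenchel_inequality_with_general_coupling_un}. A last cosmetic point: your auxiliary function is named~$g$, which is exactly the symbol that $\fonctionprimalbis$ already prints as in the paper; rename it to avoid the clash.
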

Notice that the left hand side assumption in~\eqref{eq:main} is
a rather weak inequality (upper bound for an infimum), whereas 
the right hand side assumption in~\eqref{eq:main} is
a rather strong inequality (lower bound for an infimum).
\bigskip

\begin{proof}
  \begin{subequations}
    \begin{align}
      \SFM{\fonctionprimal}{\coupling}\np{\dual} 
      &= 
        \sup_{\primal \in \PRIMAL} \Bp{ \coupling\np{\primal,\dual} 
        \LowPlus \bp{-\fonctionprimal\np{\primal}} } 
        \nonumber 
        \intertext{ by definition~\eqref{eq:upper-Fenchel-Moreau_conjugate} 
        of the conjugate~$\SFM{\fonctionprimal}{\coupling}\np{\dual}$ }
      &\leq 
        \sup_{\primal \in \PRIMAL} \bgp{ \coupling\np{\primal,\dual} 
        \LowPlus \Bp{-
        \inf_{\primalbis \in \PRIMALBIS} \bp{ \kernel\np{\primal,\primalbis} 
        \UppPlus \fonctionprimalbis\np{\primalbis} } } } 
        \nonumber 
        \intertext{ by the left hand side assumption in~\eqref{eq:main} 
        and by the property~\eqref{eq:lower_addition_leq} that 
        the operator~$\LowPlus$ is monotone 
        \protect\newline[this inequality is an equality when 
        the left hand side assumption in~\eqref{eq:main} is an equality]}
      &= 
        \sup_{\primal \in \PRIMAL} \bgp{ \coupling\np{\primal,\dual} 
        \LowPlus
        \sup_{\primalbis \in \PRIMALBIS} 
        \Bp{ - \bp{ \kernel\np{\primal,\primalbis} \UppPlus 
        \fonctionprimalbis\np{\primalbis} } } }
        \nonumber 
        \intertext{ by \(-\inf = \sup - \)}
      &=
        \sup_{\primal \in \PRIMAL, \primalbis \in \PRIMALBIS} 
        \bgp{ \coupling\np{\primal,\dual} \LowPlus  
        \Bp{ - \bp{ \kernel\np{\primal,\primalbis} \UppPlus 
        \fonctionprimalbis\np{\primalbis} } } } 
        \nonumber 
        \intertext{ by the property~\eqref{eq:lower_addition_sup} 
that the operator~$\sup$ is ``distributive'' with respect to~$\LowPlus$ }
      &\leq 
        \sup_{\primal \in \PRIMAL, \primalbis \in \PRIMALBIS} 
        \Bp{ \coupling\np{\primal,\dual} \LowPlus  
        \Bp{ - \bp{ \kernel\np{\primal,\primalbis} \UppPlus 
        \SFMbi{\fonctionprimalbis}{(-\couplingbis)}\np{\primalbis} } } } 
        \nonumber 
        \intertext{ because 
        \( \SFMbi{\fonctionprimalbis}{(-\couplingbis)} \leq \fonctionprimalbis \) 
        by~\eqref{eq:Fenchel-Moreau_biconjugate_inequality} 
        and by the property~\eqref{eq:lower_addition_leq} that 
        the operator~$\LowPlus$ is monotone 
        \protect\newline[this inequality is an equality when 
        \( \SFMbi{\fonctionprimalbis}{(-\couplingbis)} = \fonctionprimalbis \) ]}
      &=
        \sup_{\primal \in \PRIMAL, \primalbis \in \PRIMALBIS} 
        \Bgp{ \coupling\np{\primal,\dual} \LowPlus 
        \Bgp{ - \bgp{ \kernel\np{\primal,\primalbis} \UppPlus 
        \Bp{ \sup_{\dualbis \in \DUALBIS} 
        \Bp{ 
        \bp{- \couplingbis\np{\primalbis,\dualbis} } 
        \LowPlus \bp{ -\SFM{\fonctionprimalbis}{-\couplingbis}\np{\dualbis} } } } }
        } } 
        \nonumber 
        \intertext{ by definition~\eqref{eq:Fenchel-Moreau_biconjugate} of the 
        biconjugate~\( \SFMbi{\fonctionprimalbis}{(-\couplingbis)} \) }
      &\leq 
        \sup_{\primal \in \PRIMAL, \primalbis \in \PRIMALBIS} 
        \Bgp{ \coupling\np{\primal,\dual} \LowPlus  
        \bgp{ - \sup_{\dualbis \in \DUALBIS} 
        \Bp{ \kernel\np{\primal,\primalbis} \UppPlus 
        \bp{- \couplingbis\np{\primalbis,\dualbis}
        \LowPlus \bp{ -\SFM{\fonctionprimalbis}{-\couplingbis}\np{\dualbis} } } } }
        } 
        \nonumber 
        \intertext{ by the property~\eqref{eq:upper_addition_sup} that the 
        operator~$\sup$ is "subdistributive'' with respect to~$\UppPlus$,
        and by the property~\eqref{eq:upper_addition_leq} that 
        the operator~$\UppPlus$ is monotone 
        \protect\newline[this inequality is an equality when 
        \(-\infty < \kernel\np{\primal,\primalbis} \) 
        by~\eqref{eq:upper_addition_sup_constant}]}
      &=
        \sup_{\primal \in \PRIMAL, \primalbis \in \PRIMALBIS} 
        \Bgp{ \coupling\np{\primal,\dual} \LowPlus 
        \inf_{\dualbis \in \DUALBIS} -\bgp{ \kernel\np{\primal,\primalbis} 
        \UppPlus 
        \Bp{\bp{- \couplingbis\np{\primalbis,\dualbis}} 
        \LowPlus \bp{ -\SFM{\fonctionprimalbis}{-\couplingbis}\np{\dualbis} } } } } 
        \nonumber 
        \intertext{ by \( -\sup = \inf - \)}
      &\leq 
        \sup_{\primal \in \PRIMAL, \primalbis \in \PRIMALBIS} 
        \inf_{\dualbis \in \DUALBIS} 
        \Bgp{ \coupling\np{\primal,\dual} \LowPlus 
        \Bgp{ -\bgp{ \kernel\np{\primal,\primalbis} \UppPlus 
        \Bp{\bp{- \couplingbis\np{\primalbis,\dualbis}} 
        \LowPlus \bp{ -\SFM{\fonctionprimalbis}{-\couplingbis}\np{\dualbis} } } } } }
        \nonumber 
        \intertext{ by the property~\eqref{eq:lower_addition_inf} that the 
        operator~$\inf$ is "subdistributive'' with respect to~$\LowPlus$
        \protect\newline[this inequality is an equality when 
        \( \coupling\np{\primal,\dual} < +\infty \) 
        by~\eqref{eq:lower_addition_inf_constant}]}
      &= 
        \sup_{\primal \in \PRIMAL, \primalbis \in \PRIMALBIS} 
        \inf_{\dualbis \in \DUALBIS} 
        \Bgp{ \coupling\np{\primal,\dual} \LowPlus 
        \bp{ -\kernel\np{\primal,\primalbis} } 
        \LowPlus \bgp{ -\Bp{\bp{- \couplingbis\np{\primalbis,\dualbis}} 
        \LowPlus 
        \bp{-\SFM{\fonctionprimalbis}{-\couplingbis}\np{\dualbis}} } } } 
        \nonumber 
        \intertext{ by the correspondence~\eqref{eq:lower_upper_addition_minus} 
        between $\LowPlus$ and $\UppPlus$ by means of $a \mapsto -a$}
      &= 
        \sup_{\primal \in \PRIMAL, \primalbis \in \PRIMALBIS} 
        \inf_{\dualbis \in \DUALBIS} 
        \Bgp{ \coupling\np{\primal,\dual} \LowPlus 
        \bp{ -\kernel\np{\primal,\primalbis} } 
        \LowPlus \Bp{\couplingbis\np{\primalbis,\dualbis}
        \UppPlus \SFM{\fonctionprimalbis}{-\couplingbis}\np{\dualbis} } }  
        \nonumber 
        \intertext{ by the correspondence~\eqref{eq:lower_upper_addition_minus} 
        between $\LowPlus$ and $\UppPlus$ by means of $a \mapsto -a$}
      &= 
        \sup_{\primal \in \PRIMAL, \primalbis \in \PRIMALBIS} 
        \inf_{\dualbis \in \DUALBIS} 
        \Bgp{
        \bgp{\coupling\np{\primal,\dual} \LowPlus 
        \bp{ -\kernel\np{\primal,\primalbis} }}
        \LowPlus
        \Bp{ \couplingbis\np{\primalbis,\dualbis} 
        \UppPlus \SFM{\fonctionprimalbis}{-\couplingbis}\np{\dualbis} } } 
        \nonumber 
        \intertext{by associativity of~$\LowPlus$ }
      &\leq \sup_{\primal \in \PRIMAL, \primalbis \in \PRIMALBIS} 
        \inf_{\dualbis \in \DUALBIS} 
        \Bgp{
        \bgp{ \Bp{\coupling\np{\primal,\dual} \LowPlus 
        \bp{ -\kernel\np{\primal,\primalbis} }} 
        \LowPlus
        \couplingbis\np{\primalbis,\dualbis}} 
        \UppPlus \SFM{\fonctionprimalbis}{-\couplingbis}\np{\dualbis} } 
        \nonumber 
        \intertext{by the inequality~\eqref{eq:lower_upper_addition_inequality}
        \protect\newline[this inequality is an equality when 
        \( -\infty <\coupling\np{\primal,\dual} \LowPlus
        \bp{ -\kernel\np{\primal,\primalbis} } < +\infty \) and 
        \( -\infty <\couplingbis\np{\primalbis,\dualbis} < +\infty \)]
        } 
        % u=\SFM{\fonctionprimalbis}{-\couplingbis}\np{\dualbis} 
        % v=\couplingbis\np{\primalbis,\dualbis} 
        % w=\coupling\np{\primal,\dual} \LowPlus \bp{ -\kernel\np{\primal,\primalbis} }
        % 
      &= 
        \sup_{\primal \in \PRIMAL, \primalbis \in \PRIMALBIS} 
        \inf_{\dualbis \in \DUALBIS} 
        \bgp{
        \Bp{\coupling\np{\primal,\dual} \LowPlus 
        \bp{ -\kernel\np{\primal,\primalbis} }
        \LowPlus 
        \couplingbis\np{\primalbis,\dualbis}}
        \UppPlus \SFM{\fonctionprimalbis}{-\couplingbis}\np{\dualbis}} 
        \label{eq:main_supinf} 
        \intertext{by associativity of~$\LowPlus$}
      &\leq 
        \inf_{\dualbis \in \DUALBIS} 
        \sup_{\primal \in \PRIMAL, \primalbis \in \PRIMALBIS} 
        \bgp{
        \Bp{\coupling\np{\primal,\dual} \LowPlus 
        \bp{ -\kernel\np{\primal,\primalbis} }
        \LowPlus 
        \couplingbis\np{\primalbis,\dualbis}}
        \UppPlus \SFM{\fonctionprimalbis}{-\couplingbis}\np{\dualbis}}  
        \label{eq:main_infsup}
        \intertext{ by \( \sup\inf \leq \inf\sup \) }
      &= 
        \inf_{\dualbis \in \DUALBIS} 
        \Bgp{ \sup_{\primal \in \PRIMAL, \primalbis \in \PRIMALBIS} 
        \Bp{\coupling\np{\primal,\dual} \LowPlus 
        \bp{ -\kernel\np{\primal,\primalbis} }
        \LowPlus 
        \couplingbis\np{\primalbis,\dualbis}} 
        \UppPlus \SFM{\fonctionprimalbis}{-\couplingbis}\np{\dualbis}}  
        \nonumber 
        \intertext{by the property~\eqref{eq:lower_addition_sup} that the 
        operator~$\sup$ is "subdistributive'' with respect to~$\LowPlus$}
      &=
        \inf_{\dualbis \in \DUALBIS} \Bp{ 
        \SFM{\kernel}{\SumCoupling{\coupling}{\couplingbis}}\np{\dual,\dualbis}
        \UppPlus \SFM{\fonctionprimalbis}{-\couplingbis}\np{\dualbis} } 
        \nonumber \eqfinp
    \end{align}
  \end{subequations}
by the definition~\eqref{eq:product_conjugate}
        of~$\SFM{\kernel}{\SumCoupling{\coupling}{\couplingbis}}$.

  This ends the proof. 
\end{proof}

\subsection{The duality equality case}
\label{The_duality_equality_case}

% Let be given two {coupling} functions
% \( \coupling : \PRIMAL \times \DUAL \to \barRR \) and
% \( \couplingbis : \PRIMALBIS \times \DUALBIS \to \barRR \).

The equality case in~\eqref{eq:main} is the property that 
\begin{multline}
  \fonctionprimal\np{\primal} = \inf_{\primalbis \in \PRIMALBIS} 
  \Bp{ \kernel\np{\primal,\primalbis} \UppPlus 
    \fonctionprimalbis\np{\primalbis} } 
  \eqsepv \forall \primal \in \PRIMAL 
  \Rightarrow \\
  \SFM{\fonctionprimal}{\coupling}\np{\dual} = \inf_{\dualbis \in \DUALBIS} 
  \Bp{ \SFM{\kernel}{\SumCoupling{\coupling}{\couplingbis}}%
    \np{\dual,\dualbis} 
    \UppPlus \SFM{\fonctionprimalbis}{-\couplingbis}\np{\dualbis} } 
  \eqsepv \forall \dual \in \DUAL 
  \eqfinp
  \label{eq:main==}
\end{multline}
We will now provide sufficient conditions under which the
equality case~\eqref{eq:main==} holds true in different cases:
with real-valued couplings and real-valued kernel;
with extended couplings and extended kernel;
with one bilinear coupling and extended kernel.

% In \cite{Mazure-Volle:1991}, the equation 
% \( \fonctionprimal\np{\primal} = \inf_{\primalbis \in \PRIMALBIS} 
%   \Bp{ \kernel\np{\primal,\primalbis} \UppPlus 
%     \fonctionprimalbis\np{\primalbis} } \) 
% with unknown function~\( \fonctionprimalbis \) is studied.

\subsubsection{With real-valued couplings and real-valued kernel}

We consider the case where
both the couplings and the kernel take real values,
whereas all the other functions can take extended values. 

\begin{corollary}
  Consider any bivariate function 
  \( \kernel : \PRIMAL \times \PRIMALBIS \to \barRR \)
and univariate functions 
  \( \fonctionprimal : \PRIMAL  \to \barRR \) and 
  \( \fonctionprimalbis : \PRIMALBIS  \to \barRR \), 
  all defined on the ``primal'' sets.
  Suppose that
  \begin{enumerate}
  \item 
    \( \SFMbi{\fonctionprimalbis}{(-\couplingbis)} = \fonctionprimalbis \);
  \item 
    \label{it:Sion}
    % the equality between~\eqref{eq:main_supinf}
    % and \eqref{eq:main_infsup} holds true, that is 
we have strong duality 
 %   \begin{multline}
\begin{align}
&\sup_{\primal \in \PRIMAL, \primalbis \in \PRIMALBIS} 
        \inf_{\dualbis \in \DUALBIS} 
        \bgp{
        \Bp{\coupling\np{\primal,\dual} \LowPlus 
        \bp{ -\kernel\np{\primal,\primalbis} }
        \LowPlus 
        \couplingbis\np{\primalbis,\dualbis}}
        \UppPlus \SFM{\fonctionprimalbis}{-\couplingbis}\np{\dualbis}} 
\nonumber \\
 = &
        \inf_{\dualbis \in \DUALBIS} 
        \sup_{\primal \in \PRIMAL, \primalbis \in \PRIMALBIS} 
        \bgp{
        \Bp{\coupling\np{\primal,\dual} \LowPlus 
        \bp{ -\kernel\np{\primal,\primalbis} }
        \LowPlus 
        \couplingbis\np{\primalbis,\dualbis}}
        \UppPlus \SFM{\fonctionprimalbis}{-\couplingbis}\np{\dualbis}}  
     \eqfinv 
\end{align}
%    \end{multline}
for all \( \dual \in \DUAL \);
% as, for instance, when
%     % \begin{enumerate}
%     % \item 
%     %   the sets $\PRIMAL$, $\PRIMALBIS$ and $\DUALBIS$ are convex compact,
%     % \item 
%     the following function 
%     % is quasi-concave-convex u.s.c.-l.s.c. 
%     has a saddle point (or has no duality gap) for all \( \dual \in \DUAL \) 
%     % \end{enumerate}
%     \begin{equation}
%       \bp{ \np{\primal, \primalbis} , \dualbis } 
%       \in \np{\PRIMAL \times \PRIMALBIS} \times \DUALBIS \mapsto 
%       \Bp{\coupling\np{\primal,\dual} \LowPlus \bp{ -\kernel\np{\primal,\primalbis} }
%         \LowPlus 
%         \couplingbis\np{\primalbis,\dualbis} }
%       \UppPlus \SFM{\fonctionprimalbis}{-\couplingbis}\np{\dualbis} 
% \eqfinv
%     \end{equation}
  \item 
the two {coupling} functions
\( \coupling : \PRIMAL \times \DUAL \to \RR \) and
\( \couplingbis : \PRIMALBIS \times \DUALBIS \to \RR \),
and the kernel 
  \( \kernel : \PRIMAL \times \PRIMALBIS \to \RR \)
all take finite values.
  \end{enumerate}
  Then, the equality case~\eqref{eq:main==} holds true.
\label{cor:main==real-valued_couplings}
\end{corollary}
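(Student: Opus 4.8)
The plan is to revisit the chain of (in)equalities that constitutes the proof of Theorem~\ref{th:main} and to verify that, under the three hypotheses of the Corollary, every inequality in that chain becomes an equality. The implication in~\eqref{eq:main} already delivers the inequality ``$\leq$'' in the conclusion of~\eqref{eq:main==}, so only the reverse direction is at stake; and since the derivation in Theorem~\ref{th:main} was annotated at each step with the precise condition under which that step is an equality, the task reduces to matching each such condition against our hypotheses.

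I would go through the steps in order. The first inequality, obtained from the left hand side assumption together with the monotonicity~\eqref{eq:lower_addition_leq} of~$\LowPlus$, becomes an equality because we now assume the left hand side of~\eqref{eq:main==} is \emph{itself} an equality. The step using \(\SFMbi{\fonctionprimalbis}{(-\couplingbis)} \leq \fonctionprimalbis\) becomes an equality by hypothesis~1, namely \(\SFMbi{\fonctionprimalbis}{(-\couplingbis)} = \fonctionprimalbis\). The two ``subdistributivity'' inequalities --- for~$\sup$ with respect to~$\UppPlus$ and for~$\inf$ with respect to~$\LowPlus$ --- become equalities via~\eqref{eq:upper_addition_sup_constant} and~\eqref{eq:lower_addition_inf_constant}, whose finiteness provisos \(\kernel\np{\primal,\primalbis} > -\infty\) and \(\coupling\np{\primal,\dual} < +\infty\) are secured by hypothesis~3. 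Similarly, the passage through~\eqref{eq:lower_upper_addition_inequality} relating~$\LowPlus$ and~$\UppPlus$ becomes an equality because, under hypothesis~3, the term \(\coupling\np{\primal,\dual} \LowPlus \bp{-\kernel\np{\primal,\primalbis}}\) is a finite real (an ordinary sum of finite reals) and \(\couplingbis\np{\primalbis,\dualbis}\) is finite, which is exactly the finiteness requirement recorded there.

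This leaves one genuinely non-automatic step: the passage from the $\sup\inf$ in~\eqref{eq:main_supinf} to the $\inf\sup$ in~\eqref{eq:main_infsup}, where Theorem~\ref{th:main} used only the free bound \(\sup\inf \leq \inf\sup\). Here hypothesis~2 enters, since it postulates equality of these two expressions for every \(\dual \in \DUAL\). With this minimax equality in hand, the entire chain now reads as a string of equalities, and its two ends give exactly \(\SFM{\fonctionprimal}{\coupling}\np{\dual} = \inf_{\dualbis \in \DUALBIS} \Bp{ \SFM{\kernel}{\SumCoupling{\coupling}{\couplingbis}}\np{\dual,\dualbis} \UppPlus \SFM{\fonctionprimalbis}{-\couplingbis}\np{\dualbis} }\), which is the conclusion of~\eqref{eq:main==}. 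I expect this minimax exchange to be the only real obstacle: it is the single step not forced by the algebra of the lower and upper additions, and this is precisely why the Corollary assumes it outright rather than proving it (furnishing verifiable conditions for it, say through a Sion-type theorem, would require convexity and topological structure absent from this abstract setting). The rest is routine bookkeeping, confirming that the finiteness hypothesis~3 activates the cited Appendix identities at each subdistributivity and $\LowPlus$/$\UppPlus$ step.
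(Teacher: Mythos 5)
Your proof is correct and follows essentially the same route as the paper's: the paper's own argument simply notes that, under the finiteness and biconjugacy hypotheses, every annotated inequality in the proof of Theorem~\ref{th:main} becomes an equality except the $\sup\inf\leq\inf\sup$ step between~\eqref{eq:main_supinf} and~\eqref{eq:main_infsup}, which is then closed by the strong-duality hypothesis. Your write-up just makes explicit the step-by-step matching of each finiteness proviso that the paper leaves implicit.
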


\begin{proof}
  Following the proof of Theorem~\ref{th:main}, all but one inequality 
--- namely \( \sup\inf \leq \inf\sup \) between~\eqref{eq:main_supinf}
  and \eqref{eq:main_infsup} --- become equalities when the functions
  \( \coupling : \PRIMAL \times \DUAL \to \RR \),
  \( \couplingbis : \PRIMALBIS \times \DUALBIS \to \RR \)
  and \( \kernel : \PRIMAL \times \PRIMALBIS \to \RR \)
  take real values 
  and when \( \SFMbi{\fonctionprimalbis}{(-\couplingbis)} 
  = \fonctionprimalbis \). 
  Once we have the 
  % The conditions in item~\ref{it:Sion} imply the 
  equality between~\eqref{eq:main_supinf}
  and \eqref{eq:main_infsup}, we obtain that 
  the equality case~\eqref{eq:main==} holds true.
  % by Sion's Theorem.
\end{proof}

\subsubsection{With extended couplings and extended kernel}

We consider the case where the couplings, the kernel and 
all the other functions can take extended values. 

\begin{corollary}
  Consider any bivariate function 
  \( \kernel : \PRIMAL \times \PRIMALBIS \to \barRR \)
and univariate functions 
  \( \fonctionprimal : \PRIMAL  \to \barRR \) and 
  \( \fonctionprimalbis : \PRIMALBIS  \to \barRR \), 
  all defined on the ``primal'' sets.
  We define
  \begin{equation}
    \kernel_{\dual}  \np{\primalbis} =
- \Bp{\SFM{ \kernel\np{\cdot,\primalbis} }{\coupling}\np{\dual}} =
    \inf_{\primal \in \PRIMAL} 
    \Bp{ \bp{-\coupling\np{\primal,\dual}} \UppPlus 
      \kernel\np{\primal, \primalbis }}
    \eqsepv \forall \np{\dual,\primalbis} \in \DUAL\times\PRIMALBIS
    \eqfinp
    \label{eq:marginal_kernel}
  \end{equation}
  Suppose that
  \begin{equation}
\sup_{ \primalbis \in \PRIMALBIS} \bgp{ 
       \bp{ -\kernel_{\dual} \np{\primalbis} }  \LowPlus 
\bp{ -\fonctionprimalbis\np{\primalbis} } }
=
        \inf_{ \dualbis \in \DUALBIS} \bgp{
        \kernel_{\dual}^{\couplingbis}\np{\dualbis}
        \UppPlus \SFM{\fonctionprimalbis}{-\couplingbis}\np{\dualbis} } 
\eqfinp
\label{eq:rfd-post}
  \end{equation}
  Then, the equality case~\eqref{eq:main==} holds true.
\label{cor:main==extended_couplings}
\end{corollary}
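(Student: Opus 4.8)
The plan is to avoid re-running the inequality chain of the proof of Theorem~\ref{th:main}, and instead to recast both sides of the target equality~\eqref{eq:main==} in terms of the marginal kernel~$\kernel_{\dual}$ from~\eqref{eq:marginal_kernel}. This turns the two-variable conjugation identity into the one-variable duality equality~\eqref{eq:rfd-post}, which is exactly the hypothesis. Throughout, I fix $\dual \in \DUAL$ and assume the premise of~\eqref{eq:main==}, namely the primal equality $\fonctionprimal\np{\primal} = \inf_{\primalbis \in \PRIMALBIS} \bp{ \kernel\np{\primal,\primalbis} \UppPlus \fonctionprimalbis\np{\primalbis} }$ for all $\primal \in \PRIMAL$.

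\emph{Step 1 (left hand side).} I would rewrite $\SFM{\fonctionprimal}{\coupling}\np{\dual}$. Starting from the definition of the conjugate, I inject the primal equality, then apply $-\inf = \sup\np{-\cdot}$, the correspondence~\eqref{eq:lower_upper_addition_minus} between $\LowPlus$ and $\UppPlus$ under $a \mapsto -a$, associativity of $\LowPlus$, and the distributivity~\eqref{eq:lower_addition_sup} of $\sup$ over $\LowPlus$ to peel off the $\primal$-supremum, so as to reach
\[
\SFM{\fonctionprimal}{\coupling}\np{\dual} = \sup_{\primalbis \in \PRIMALBIS} \Bp{ \bp{-\kernel_{\dual}\np{\primalbis}} \LowPlus \bp{-\fonctionprimalbis\np{\primalbis}} } \eqfinv
\]
because $-\kernel_{\dual}\np{\primalbis} = \SFM{\kernel\np{\cdot,\primalbis}}{\coupling}\np{\dual} = \sup_{\primal \in \PRIMAL} \bp{ \coupling\np{\primal,\dual} \LowPlus \bp{-\kernel\np{\primal,\primalbis}} }$ by~\eqref{eq:marginal_kernel}. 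The right hand side above is the left hand side of the assumption~\eqref{eq:rfd-post}.

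\emph{Step 2 (right hand side).} I would show that the sum-coupling conjugate of the kernel factors through the marginal kernel, i.e. $\SFM{\kernel}{\SumCoupling{\coupling}{\couplingbis}}\np{\dual,\dualbis} = \kernel_{\dual}^{\couplingbis}\np{\dualbis}$ for every $\dualbis \in \DUALBIS$. Starting from the definition~\eqref{eq:product_conjugate} of $\SFM{\kernel}{\SumCoupling{\coupling}{\couplingbis}}$ and from $\kernel_{\dual}^{\couplingbis}$ as the $\couplingbis$-conjugate of $\kernel_{\dual}$, this follows by inserting $-\kernel_{\dual}\np{\primalbis} = \sup_{\primal} \bp{ \coupling\np{\primal,\dual} \LowPlus \bp{-\kernel\np{\primal,\primalbis}} }$, by the distributivity~\eqref{eq:lower_addition_sup} of $\sup$ over $\LowPlus$, by associativity of $\LowPlus$, and by $\sup_{\primalbis}\sup_{\primal} = \sup_{\primal,\primalbis}$. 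Consequently, the right hand side of~\eqref{eq:main==} equals $\inf_{\dualbis \in \DUALBIS} \bp{ \kernel_{\dual}^{\couplingbis}\np{\dualbis} \UppPlus \SFM{\fonctionprimalbis}{-\couplingbis}\np{\dualbis} }$, which is the right hand side of~\eqref{eq:rfd-post}.

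Having matched the two sides of~\eqref{eq:main==} with the two sides of the hypothesis~\eqref{eq:rfd-post}, I would conclude at once by invoking~\eqref{eq:rfd-post}; as $\dual \in \DUAL$ was arbitrary, this yields the equality case~\eqref{eq:main==}. The one point deserving care --- and the reason no finiteness is required, in contrast with Corollary~\ref{cor:main==real-valued_couplings} --- is that Steps~1 and~2 must be genuine equalities for arbitrary $\barRR$-valued data. This holds because both rewritings use \emph{only} the exactly distributive rule~\eqref{eq:lower_addition_sup} of $\sup$ over the lower addition together with the exact sign correspondence~\eqref{eq:lower_upper_addition_minus}, and never the merely \emph{sub}distributive rules (of $\sup$ over $\UppPlus$, or of $\inf$ over $\LowPlus$) that cost finiteness in the proof of Theorem~\ref{th:main}. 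Thus the entire duality content of the statement is concentrated in the single minimax-type equality~\eqref{eq:rfd-post}.
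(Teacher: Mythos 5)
Your proof is correct and follows essentially the same route as the paper's: the paper's own argument likewise reduces $\SFM{\fonctionprimal}{\coupling}\np{\dual}$, under the primal equality, to $\sup_{\primalbis \in \PRIMALBIS}\bp{\bp{-\kernel_{\dual}\np{\primalbis}}\LowPlus\bp{-\fonctionprimalbis\np{\primalbis}}}$ via the marginal kernel, identifies $\SFM{\kernel}{\SumCoupling{\coupling}{\couplingbis}}\np{\dual,\dualbis}$ with $\kernel_{\dual}^{\couplingbis}\np{\dualbis}$, and then invokes~\eqref{eq:rfd-post} as the exactness of the generalized Fenchel inequality for the pair $\kernel_{\dual},\fonctionprimalbis$. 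Your remark that only the exact rules (distributivity of $\sup$ over $\LowPlus$ and the sign correspondence) are used, so that no finiteness hypotheses are needed, matches the paper's own accounting of which steps are genuine equalities.
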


\begin{proof}
   First, to prove the equality result~\eqref{eq:main==}, 
  we start by stating the so-called Fenchel inequality,
but with a general coupling:
  for any two 
  functions \( \fonctiontrois : \PRIMALBIS  \to \barRR \) 
  and \( \fonctionprimalbis : \PRIMALBIS  \to \barRR \), 
  we have that
  \begin{equation}
      \sup_{\primalbis \in \PRIMALBIS} \Bp{ \bp{-\fonctiontrois\np{\primalbis}}
      \LowPlus \bp{-\fonctionprimalbis\np{\primalbis}} } 
      \leq
        \inf_{\dualbis \in \DUALBIS} 
        \Bp{ \SFM{\fonctiontrois}{\couplingbis}\np{\dualbis} 
        \UppPlus \SFM{\fonctionprimalbis}{-\couplingbis}\np{\dualbis} } 
        \eqfinp
        \label{eq:Fenchel_inequality_with_general_coupling_un}     
  \end{equation}
The proof easily follows from the definition of the Fenchel-Moreau conjugate
in~\eqref{eq:upper-Fenchel-Moreau_conjugate}. 
It is also a corollary of Theorem~\ref{th:main} when 
  we take singleton sets $\PRIMAL=\{ \primal \}$ and
$\DUAL=\{ \dual \}$, with the null coupling
\( \coupling\np{\primal,\dual}=0 \).
\bigskip

Second, we 
give a new proof of~\eqref{eq:main} in Theorem~\ref{th:main}.
  \begin{subequations}
    \begin{align}
      \SFM{\fonctionprimal}{\coupling}\np{\dual} 
      &= 
        \sup_{\primal \in \PRIMAL} 
\Bp{ \coupling\np{\primal,\dual} \LowPlus \bp{-\fonctionprimal\np{\primal}} } 
\nonumber 
      \intertext{ by definition~\eqref{eq:upper-Fenchel-Moreau_conjugate} 
      of the conjugate~$\SFM{\fonctionprimal}{\coupling}\np{\dual}$ }
      &\leq 
\sup_{\primal \in \PRIMAL} \bgp{ \coupling\np{\primal,\dual} \LowPlus \Bp{-
        \inf_{\primalbis \in \PRIMALBIS} \bp{ \kernel\np{\primal,\primalbis} 
\UppPlus \fonctionprimalbis\np{\primalbis} } } }
        \label{ineq:un}
      \intertext{ by the left hand side assumption in~\eqref{eq:main} 
      and by the property~\eqref{eq:lower_addition_leq} that 
      the operator~$\LowPlus$ is monotone }
      &= 
\sup_{\primal \in \PRIMAL} \bgp{ \coupling\np{\primal,\dual} \LowPlus
        \sup_{\primalbis \in \PRIMALBIS} 
\Bp{ - \bp{ \kernel\np{\primal,\primalbis} \UppPlus 
\fonctionprimalbis\np{\primalbis} } } }
\nonumber
      \intertext{ by \(-\inf = \sup - \)}
      &=
\sup_{\primal \in \PRIMAL, \primalbis \in \PRIMALBIS} 
\bgp{ \coupling\np{\primal,\dual} \LowPlus  
        \Bp{ - \bp{ \kernel\np{\primal,\primalbis} \UppPlus 
\fonctionprimalbis\np{\primalbis} } } }
\nonumber 
      \intertext{ by the property~\eqref{eq:lower_addition_sup} that the 
operator~$\sup$ is ``distributive'' with respect to~$\LowPlus$ }
      &=
\sup_{\primal \in \PRIMAL, \primalbis \in \PRIMALBIS} 
\bgp{ \Bp{ \coupling\np{\primal,\dual} \LowPlus  
        \bp{ - \kernel\np{\primal,\primalbis}}} \LowPlus 
\bp{ -\fonctionprimalbis\np{\primalbis} } } 
\nonumber 
      \intertext{ by \eqref{eq:lower_upper_addition_minus} 
and by associativity of~$\LowPlus$ } 
      &=
\sup_{ \primalbis \in \PRIMALBIS} \bgp{ 
        \sup_{\primal \in \PRIMAL} \Bp{ \coupling\np{\primal,\dual} \LowPlus  
        \bp{ - \kernel\np{\primal,\primalbis}} } \LowPlus 
\bp{ -\fonctionprimalbis\np{\primalbis} } }
\nonumber 
      \intertext{ by the property~\eqref{eq:lower_addition_sup} that the 
operator~$\sup$ is ``distributive'' with respect to~$\LowPlus$ }
      &=
\sup_{ \primalbis \in \PRIMALBIS} \bgp{ 
       \bp{ -\kernel_{\dual} \np{\primalbis} }  \LowPlus 
\bp{ -\fonctionprimalbis\np{\primalbis} } }
\nonumber 
      \intertext{as \( \kernel_{\dual} \np{\primalbis} =
-\sup_{\primal \in \PRIMAL} \Bp{ \coupling\np{\primal,\dual} \LowPlus  
        \bp{ - \kernel\np{\primal,\primalbis}} } \) 
by~\eqref{eq:marginal_kernel} }
      &\leq
        \inf_{ \dualbis \in \DUALBIS} \bgp{
        \kernel_{\dual}^{\couplingbis}\np{\dualbis}
        \UppPlus \SFM{\fonctionprimalbis}{-\couplingbis}\np{\dualbis} } 
        \label{ineq:rfd-post} 
      \intertext{by Fenchel 
inequality~\eqref{eq:Fenchel_inequality_with_general_coupling_un}
where \( \fonctiontrois\np{\primalbis} =
\kernel_{\dual} \np{\primalbis} \) 
}
      & = 
        \inf_{ \dualbis \in \DUALBIS} \bgp{ \sup_{\primalbis \in \PRIMALBIS} 
        \bgp{\couplingbis\np{\primalbis,\dualbis} \LowPlus  
        \bp{ - \kernel_{\dual} \np{\primalbis} } }
        \UppPlus \SFM{\fonctionprimalbis}{-\couplingbis}\np{\dualbis}
}\nonumber
      \intertext{ by definition~\eqref{eq:upper-Fenchel-Moreau_conjugate} 
      of the Fenchel-Moreau $\couplingbis$-conjugate of \( \kernel_{\dual} \)}
      & = 
        \inf_{ \dualbis \in \DUALBIS} \bgp{ \sup_{\primalbis \in \PRIMALBIS} 
        \bgp{\couplingbis\np{\primalbis,\dualbis} \LowPlus  
        \sup_{\primal \in \PRIMAL} 
        \Bp{  \coupling\np{\primal,\dual} 
        \LowPlus  
        \bp{ - \kernel\np{\primal,  \primalbis }}}}
        \UppPlus \SFM{\fonctionprimalbis}{-\couplingbis}\np{\dualbis}
}\nonumber
      \intertext{as \( -\kernel_{\dual} \np{\primalbis} =
\sup_{\primal \in \PRIMAL} \Bp{ \coupling\np{\primal,\dual} \LowPlus  
        \bp{ - \kernel\np{\primal,\primalbis}} } \) 
by~\eqref{eq:marginal_kernel} }
      & = 
\inf_{ \dualbis \in \DUALBIS} \bgp{
        \sup_{\primal \in \PRIMAL, \primalbis \in \PRIMALBIS}
        \bgp{\couplingbis\np{\primalbis,\dualbis} \LowPlus  
        \coupling\np{\primal,\dual} 
        \LowPlus  
        \bp{ - \kernel\np{\primal,  \primalbis }}}\UppPlus \SFM{\fonctionprimalbis}{-\couplingbis}\np{\dualbis}}
\nonumber 
      \intertext{ by the property~\eqref{eq:lower_addition_sup} that the 
operator~$\sup$ is ``distributive'' with respect to~$\LowPlus$,
      and by associativity of~$\LowPlus$}
      &= 
        \inf_{ \dualbis \in \DUALBIS} 
        \bgp{ \SFM{\kernel}{\SumCoupling{\coupling}{\couplingbis}}
\np{\dual,\dualbis}
        \UppPlus \SFM{\fonctionprimalbis}{-\couplingbis}\np{\dualbis}} 
\nonumber 
    \end{align}
  \end{subequations}
by the definition~\eqref{eq:product_conjugate}
of~$\SFM{\kernel}{\SumCoupling{\coupling}{\couplingbis}}$.
  This ends the new proof of Theorem~\ref{th:main}. 
\bigskip

Third, to end the proof of Corollary~\ref{cor:main==extended_couplings}, 
we just check two points.
That inequality~\eqref{ineq:un}
is an equality, by the left hand side assumption in~\eqref{eq:main==}.
That inequality~\eqref{ineq:rfd-post} is also an equality
by assumption~\eqref{eq:rfd-post}.
\end{proof}

\subsubsection{With one bilinear coupling and extended kernel}

We consider the case where one of the two couplings is bilinear, 
whereas the other coupling, the kernel and 
all the other functions can take extended values. 

Let $\PRIMALBIS$ be a locally convex Hausdorff topological vector space
over the real numbers~$\RR$, with its topological dual~$\DUALBIS$
made of continuous linear forms on~$\PRIMALBIS$.
% so that the coupling \( \couplingbis : \PRIMALBIS \times \DUALBIS \to \RR \) is 
The coupling is the duality bilinear form \( \proscal{}{} \), and 
the conjugacy operator on functions is denoted by$\;^\star$.
Let be given $\PRIMAL$ and $\DUAL$, two sets.

\begin{corollary}
  Consider any bivariate function 
  \( \kernel : \PRIMAL \times \PRIMALBIS \to \barRR \)
  and univariate functions 
  \( \fonctionprimal : \PRIMAL  \to \barRR \) and %\( \fonctionprimalbis : \PRIMALBIS  \to \barRR \), 
  \( \fonctionprimalbis : \PRIMALBIS  \to ]-\infty, +\infty ] \), 
  all defined on the ``primal'' sets. 
Let \( \coupling : \PRIMAL \times \DUAL \to \barRR \) be a {coupling} function.
  Suppose that
  \begin{enumerate}
  \item 
the coupling \( \couplingbis : \PRIMALBIS \times \DUALBIS \to \RR \) is 
  the duality bilinear form \( \proscal{}{} \) between 
  $\PRIMALBIS$ and its topological dual~$\DUALBIS$,
  \item 
    the function~$\fonctionprimalbis$ is a proper\footnote{%
      The function~$\fonctionprimalbis$ never takes the value $-\infty$ 
and is not identicaly equal to~$+\infty$.} convex function,
    \label{it:g_proper_convex_function}
  \item 
    for any $\dual\in \DUAL$, 
    the function~$\kernel_{\dual}$ in~\eqref{eq:marginal_kernel}
    is a proper convex function,
    \label{it:marginal_kernel_proper_convex_function}
  \item 
    for any $\dual\in \DUAL$, the function~$\fonctionprimalbis$ is continuous at some point 
    where $\kernel_{\dual}$ is finite.
    \label{it:g_proper_continuous_function}
  \end{enumerate}
  Then, the equality case~\eqref{eq:main==} holds true.
\label{cor:main==bilinear_coupling}
\end{corollary}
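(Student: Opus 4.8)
The plan is to deduce the statement from Corollary~\ref{cor:main==extended_couplings}, whose only hypothesis is the strong duality equality~\eqref{eq:rfd-post}. Thus, it suffices to check that, under the assumptions of the present corollary, equation~\eqref{eq:rfd-post} holds true --- equivalently, that the Fenchel inequality~\eqref{ineq:rfd-post}, used in the proof of Corollary~\ref{cor:main==extended_couplings} with $\fonctiontrois = \kernel_{\dual}$, is in fact an equality. Everything is then reduced to a single application of the classical Fenchel duality theorem.

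First I would translate~\eqref{eq:rfd-post} into a statement about classical Fenchel conjugates. Since, by assumption, $\couplingbis$ is the duality pairing $\proscal{\cdot}{\cdot}$, the definition~\eqref{eq:upper-Fenchel-Moreau_conjugate} of the Fenchel--Moreau conjugate gives $\kernel_{\dual}^{\couplingbis} = \LFM{(\kernel_{\dual})}$, the ordinary conjugate, whereas the $(-\couplingbis)$-conjugate satisfies $\SFM{\fonctionprimalbis}{-\couplingbis}\np{\dualbis} = \sup_{\primalbis \in \PRIMALBIS} \bp{ -\proscal{\primalbis}{\dualbis} - \fonctionprimalbis\np{\primalbis} } = \LFM{\fonctionprimalbis}\np{-\dualbis}$, the ordinary conjugate evaluated at the opposite point.

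Next I would dispose of the Moreau additions. By assumptions~\ref{it:g_proper_convex_function} and~\ref{it:marginal_kernel_proper_convex_function}, the functions $\fonctionprimalbis$ and $\kernel_{\dual}$ are proper, so $-\fonctionprimalbis$ and $-\kernel_{\dual}$ never take the value $+\infty$ and the lower addition $\LowPlus$ on the left-hand side of~\eqref{eq:rfd-post} is an ordinary sum; dually, the conjugates $\LFM{(\kernel_{\dual})}$ and $\LFM{\fonctionprimalbis}$ of proper functions never take the value $-\infty$, so the upper addition $\UppPlus$ on the right-hand side is an ordinary sum. Hence~\eqref{eq:rfd-post} is equivalent to the Fenchel duality equality
\[
\inf_{\primalbis \in \PRIMALBIS} \bp{ \kernel_{\dual}\np{\primalbis} + \fonctionprimalbis\np{\primalbis} }
= \sup_{\dualbis \in \DUALBIS} \bp{ -\LFM{(\kernel_{\dual})}\np{\dualbis} - \LFM{\fonctionprimalbis}\np{-\dualbis} }
\eqfinp
\]

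Finally I would invoke the classical Fenchel--Rockafellar duality theorem. By assumptions~\ref{it:g_proper_convex_function} and~\ref{it:marginal_kernel_proper_convex_function}, the functions $\kernel_{\dual}$ and $\fonctionprimalbis$ are proper convex on the locally convex space~$\PRIMALBIS$; by assumption~\ref{it:g_proper_continuous_function}, there is a point of $\dom \kernel_{\dual} \cap \dom \fonctionprimalbis$ at which $\fonctionprimalbis$ is continuous. This is exactly the continuity constraint qualification ensuring that the displayed duality equality holds, with attainment of the supremum on the dual side, hence that~\eqref{eq:rfd-post} is satisfied for every $\dual \in \DUAL$. Corollary~\ref{cor:main==extended_couplings} then yields the equality case~\eqref{eq:main==}. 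The only analytic input is this classical theorem, whose hypotheses are matched one-to-one by assumptions~\ref{it:g_proper_convex_function}--\ref{it:g_proper_continuous_function}; in particular no lower semicontinuity of~$\fonctionprimalbis$ is required, since the continuity qualification already forces the primal infimum and the dual supremum to coincide. The main delicate point is therefore not analytic but bookkeeping: correctly handling the lower/upper Moreau additions and the sign in the $(-\couplingbis)$-conjugate, so as to bring~\eqref{eq:rfd-post} exactly into the form covered by the classical theorem.
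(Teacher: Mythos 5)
Your proposal is correct and follows essentially the same route as the paper: reduce to verifying the hypothesis~\eqref{eq:rfd-post} of Corollary~\ref{cor:main==extended_couplings} (equivalently, that inequality~\eqref{ineq:rfd-post} is an equality), and obtain it from Rockafellar's extension of Fenchel's duality theorem applied to the proper convex functions $\kernel_{\dual}$ and $\fonctionprimalbis$ under the continuity qualification. Your extra bookkeeping --- checking that properness turns the Moreau additions into ordinary sums and tracking the sign in the $(-\couplingbis)$-conjugate --- is sound and merely makes explicit what the paper leaves implicit in its citation of \cite[Theorem 1]{MR0187062}.
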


\begin{proof}
  The equality case~\eqref{eq:main==} follows by
  checking that the inequalities~\eqref{ineq:un} and~\eqref{ineq:rfd-post} 
  % contained in the proof 
  turn out to be equalities, under the assumptions of
  Corollary~\ref{cor:main==bilinear_coupling}.
  Indeed, the left hand equality
  in~\eqref{eq:main==} gives an equality in~\eqref{ineq:un}.
  The equality in~\eqref{ineq:rfd-post} is a consequence of 
  the equality
  \begin{equation*}
    \inf_{ \primalbis \in \PRIMALBIS} 
    \Bp{ \kernel_{\dual} \np{\primalbis} \UppPlus 
      \fonctionprimalbis\np{\primalbis}} 
    = - \inf_{ \dualbis \in \DUALBIS} 
    \Bp{\kernel_{\dual}^{\couplingbis} \np{\dualbis}
      \UppPlus \SFM{\fonctionprimalbis}{-\couplingbis}\np{\dualbis}}\eqfinv \\
  \end{equation*}
  which holds true by~\cite[Theorem 1]{MR0187062}, 
  under the assumptions made on the  functions~$\fonctionprimalbis$ and $\kernel_{\dual}$,
  where the coupling \( \couplingbis : \PRIMALBIS \times \DUALBIS \to \RR \) is 
  the duality bilinear form \( \proscal{}{} \) between 
  $\PRIMALBIS$ and its dual~$\DUALBIS$.
\end{proof}

\begin{remark} 
  We can weaken the assumptions in Corollary~\ref{cor:main==bilinear_coupling} in two ways.
  \begin{itemize}
  \item 
    Some of the assumptions in 
    item~\ref{it:marginal_kernel_proper_convex_function} 
    in Corollary~\ref{cor:main==bilinear_coupling} --- that bear 
    on the marginal function~$\kernel_{\dual}$ in~\eqref{eq:marginal_kernel} ---
    can be obtained from assumptions
    on the basic elements~$\kernel$ (kernel) and~$\coupling$ (coupling).
    Indeed, if both
    the functions~\( \kernel : \PRIMAL \times \PRIMALBIS \to \barRR \)
    and~\( \coupling(\cdot,\dual) : \PRIMAL \to \barRR \)
    are convex, for any $\dual\in \DUAL$, then 
    the function~$\kernel_{\dual}$ in~\eqref{eq:marginal_kernel} is convex.
  \item 
    The assumptions in 
    item~\ref{it:g_proper_continuous_function}
    in Corollary~\ref{cor:main==bilinear_coupling} 
    can be replaced by the following 
    assumptions (using~\cite[Proposition 15.13]{MR3616647}): 
    $\fonctionprimalbis$ is a proper l.s.c. (lower semi continuous) convex function,
    the marginal function~$\kernel_{\dual}$ in~\eqref{eq:marginal_kernel} 
    is a proper l.s.c. convex function,
    and $0 \in \sri \bp{ \dom \np{\overline\kernel_{\dual}} - \dom \np{\fonctionprimalbis}}$,
    for any $\dual\in \DUAL$.
    Moreover, for any given $\dual\in \DUAL$, 
the marginal function~$\kernel_{\dual}$ in~\eqref{eq:marginal_kernel} 
    is a l.s.c. convex function under the following assumptions
(see~\cite{MR2458436}):
the function~$\kernel$ is l.s.c. convex:
the function $\coupling(\cdot,\dual)$ is proper u.s.c. (upper semi continuous), 
    and the minimization in the definition~\eqref{eq:marginal_kernel} 
of $\kernel_{\dual}$ is performed on a compact set.
  \end{itemize}
\end{remark}

\section{Applications}
\label{Applications}

We now display three applications of our main result 
in Theorem~\ref{th:main}. 
We provide a new formula for the Fenchel-Moreau conjugate of a
generalized inf-convolution.
We obtain formulas with partial Fenchel-Moreau conjugates.
Finally, we consider the Bellman equation in stochastic dynamic 
programming and we  provide a ``Bellman-like'' 
equation for the Fenchel conjugates of the value functions.

\subsection{Fenchel-Moreau  conjugate of generalized inf-convolution}

We generalize the inf-convolution, 
and provide an inequality and an equality with Fenchel-Moreau conjugates
involving three coupling functions.

\begin{definition}
  Let be given three sets $\PRIMAL$, $\PRIMALBIS_1$ and $\PRIMALBIS_2$.
  For any trivariate \emph{convoluting function}
  \begin{equation}
    \convolution : \PRIMALBIS_1 \times \PRIMAL \times \PRIMALBIS_2 \to \barRR 
    % \kernel : \PRIMAL \times \PRIMALBIS_1 \times \PRIMALBIS_2 \to \barRR 
    \eqfinv
    \label{eq:convoluting_inf-convolution}
  \end{equation}
  we define the $\convolution$-\emph{inf-convolution} of two functions
  \( \fonctionprimalbis_1 : \PRIMALBIS_1  \to \barRR \) and 
  \( \fonctionprimalbis_2 : \PRIMALBIS_2  \to \barRR \) by
  \begin{equation}
    \bp{\fonctionprimalbis_1 \overset{\convolution}{\square} 
\fonctionprimalbis_2}
    \np{\primal} =
    \inf_{\primalbis_1 \in \PRIMALBIS_1 , \primalbis_2 \in \PRIMALBIS_2 }
    \bgp{ \fonctionprimalbis_1\np{\primalbis_1} \UppPlus 
      \convolution\np{\primalbis_1,\primal,\primalbis_2} \UppPlus
      \fonctionprimalbis_2\np{\primalbis_2} } 
    \eqsepv \forall \primal \in \PRIMAL 
    \eqfinp
    \label{eq:inf-convolution}
  \end{equation}
\end{definition}

To any convoluting function~\( \convolution \) 
in~\eqref{eq:convoluting_inf-convolution}, 
we can easily attach 
\begin{enumerate}
\item 
  a coupling function~\( \tildeconvolution : \PRIMAL \times 
  \np{ \PRIMALBIS_1 \times \PRIMALBIS_2 }  \to \barRR \) 
  between \( \PRIMAL \) and %\( \PRIMALBIS = 
  \( \PRIMALBIS_1 \times \PRIMALBIS_2 \)
  defined by
  \begin{equation}
    % \PRIMAL
    % \overset{\tilde\convolution}{\leftrightarrow} 
    % \PRIMALBIS_1 \times \PRIMALBIS_2 
    % \mtext{ with }
    \tildeconvolution\bp{\primal,\np{\primalbis_1,\primalbis_2}}=
    \convolution\np{\primalbis_1,\primal,\primalbis_2} 
    \eqsepv \forall \np{\primal,\primalbis_1,\primalbis_2} \in 
    \PRIMAL \times \PRIMALBIS_1 \times \PRIMALBIS_2 
    \eqfinv
\label{eq:tildeconvolution}
  \end{equation}
  % where we have used the notation~\eqref{eq:coupling} for a coupling,
\item 
  a kernel function \( \tildetildeconvolution 
  : \PRIMAL \times \PRIMALBIS_1 \times \PRIMALBIS_2 \to \barRR \)
  defined by 
  \begin{equation}
    \tildetildeconvolution\np{\primal,\primalbis_1,\primalbis_2}=
    \convolution\np{\primalbis_1,\primal,\primalbis_2} 
    \eqsepv \forall \np{\primal,\primalbis_1,\primalbis_2} \in 
    \PRIMAL \times \PRIMALBIS_1 \times \PRIMALBIS_2 
    \eqfinp
\label{eq:tildetildeconvolution}
  \end{equation}
\end{enumerate}
% In what follows, we will use the same notation~\( \convolution \) 
% for the three objects, as the proper meaning can be deduced from the context.

We provide an inequality with Fenchel-Moreau conjugates
involving three coupling functions.

\begin{proposition}
  Let be given three ``primal'' sets $\PRIMAL$, $\PRIMALBIS_1$, $\PRIMALBIS_2$ 
and three ``dual'' sets  $\DUAL$, $\DUALBIS_1$, $\DUALBIS_2$, 
  together with three coupling functions
  \begin{equation}
    \coupling : \PRIMAL \times \DUAL \to \barRR \eqsepv
    \couplingbis_1 : \PRIMALBIS_1 \times \DUALBIS_1 \to \barRR \eqsepv
    \couplingbis_2 : \PRIMALBIS_2 \times \DUALBIS_2 \to \barRR 
    \eqfinp 
  \end{equation}
  For any univariate functions 
  \( \fonctionprimal : \PRIMAL  \to \barRR \),
  \( \fonctionprimalbis_1 : \PRIMALBIS_1  \to \barRR \) and 
  \( \fonctionprimalbis_2 : \PRIMALBIS_2  \to \barRR \),
  all defined on the ``primal'' sets, 
  we have that
  \begin{equation}
    \fonctionprimal\np{\primal} \geq 
    \bp{\fonctionprimalbis_1  \overset{\convolution}{\square} \fonctionprimalbis_2}
    \np{\primal}     \eqsepv \forall \primal \in \PRIMAL 
    \Rightarrow 
    \SFM{\fonctionprimal}{\coupling}\np{\dual} \leq 
    \bp{ \SFM{\fonctionprimalbis_1}{(-\couplingbis_1)}  
      \overset{ {\convolution}^{\sharp} }{\square} 
      \SFM{\fonctionprimalbis_2}{(-\couplingbis_2)} }\np{\dual}
    \eqsepv \forall \dual \in \DUAL 
    \eqfinv
    \label{eq:main_inf-convolution}
  \end{equation}
  where the convoluting function~\( {\convolution}^{\sharp} \)
  on the ``dual'' sets is given by
  \begin{subequations}
  \begin{equation}
    {\convolution}^{\sharp} =  \SFM{\tildetildeconvolution}%
    {\SumCoupling{\coupling}{\couplingbis_1\LowPlus\couplingbis_2} } 
    \eqfinv
  \end{equation}
(where the kernel~\( \tildetildeconvolution \) 
is defined in~\eqref{eq:tildetildeconvolution}),
  that is, by
  \begin{multline}
    {\convolution}^{\sharp}\np{\dualbis_1,\dual,\dualbis_2}
    = \\ 
    \sup_{\np{\primalbis_1,\primal,\primalbis_2} \in 
      \PRIMALBIS_1 \times \PRIMAL \times \PRIMALBIS_2 }
    \Bp{ 
      \coupling\np{\primal,\dual} 
      \LowPlus 
      \couplingbis_1\np{\primalbis_1,\dualbis_1} \LowPlus
      \couplingbis_2\np{\primalbis_2,\dualbis_2} 
      % \Bp{ -\bp{ \couplingbis_1\np{\primalbis_1,\dualbis_1} \LowPlus
      % \couplingbis_2\np{\primalbis_2,\dualbis_2} } }
      \LowPlus 
      \bp{ - \convolution\np{\primalbis_1,\primal,\primalbis_2} } } 
    \eqfinp
    \label{eq:dual-convolution}
  \end{multline}
  \end{subequations}
\end{proposition}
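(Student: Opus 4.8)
The plan is to derive the implication~\eqref{eq:main_inf-convolution} from Theorem~\ref{th:main}, after bundling the two ``secondary'' primal and dual sets into products. Concretely, I would apply Theorem~\ref{th:main} with the data $\PRIMALBIS := \PRIMALBIS_1 \times \PRIMALBIS_2$, $\DUALBIS := \DUALBIS_1 \times \DUALBIS_2$, the sum coupling $\couplingbis := \couplingbis_1 \LowPlus \couplingbis_2$ between these products, the kernel $\kernel := \tildetildeconvolution$ of~\eqref{eq:tildetildeconvolution}, and the function $\fonctionprimalbis\np{\primalbis_1,\primalbis_2} := \fonctionprimalbis_1\np{\primalbis_1} \UppPlus \fonctionprimalbis_2\np{\primalbis_2}$, leaving $\PRIMAL$, $\DUAL$, $\coupling$ and $\fonctionprimal$ unchanged.

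First I would check that the hypotheses match. By~\eqref{eq:tildetildeconvolution} and the associativity and commutativity of~$\UppPlus$, the infimum $\inf_{\np{\primalbis_1,\primalbis_2}} \bp{ \kernel\np{\primal,\np{\primalbis_1,\primalbis_2}} \UppPlus \fonctionprimalbis\np{\primalbis_1,\primalbis_2}}$ is exactly $\bp{\fonctionprimalbis_1 \overset{\convolution}{\square} \fonctionprimalbis_2}\np{\primal}$ from~\eqref{eq:inf-convolution}, so the left-hand side assumption $\fonctionprimal \geq \fonctionprimalbis_1 \overset{\convolution}{\square} \fonctionprimalbis_2$ of the Proposition is precisely the left-hand side assumption of Theorem~\ref{th:main} for these data. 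Theorem~\ref{th:main} then gives $\SFM{\fonctionprimal}{\coupling}\np{\dual} \leq \inf_{\np{\dualbis_1,\dualbis_2}} \bp{ \SFM{\kernel}{\SumCoupling{\coupling}{\couplingbis}}\np{\dual,\np{\dualbis_1,\dualbis_2}} \UppPlus \SFM{\fonctionprimalbis}{-\couplingbis}\np{\dualbis_1,\dualbis_2} }$. Comparing the definition~\eqref{eq:product_conjugate} of $\SFM{\kernel}{\SumCoupling{\coupling}{\couplingbis}}$ with~\eqref{eq:dual-convolution}, I would note that $\SFM{\kernel}{\SumCoupling{\coupling}{\couplingbis}}\np{\dual,\np{\dualbis_1,\dualbis_2}} = {\convolution}^{\sharp}\np{\dualbis_1,\dual,\dualbis_2}$ (the same supremum, the reordering of arguments mirroring that between $\tildetildeconvolution$ and $\convolution$).

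It remains to control the ``dual'' penalty term, that is, to prove the key inequality $\SFM{\fonctionprimalbis}{-\couplingbis}\np{\dualbis_1,\dualbis_2} \leq \SFM{\fonctionprimalbis_1}{(-\couplingbis_1)}\np{\dualbis_1} \UppPlus \SFM{\fonctionprimalbis_2}{(-\couplingbis_2)}\np{\dualbis_2}$. Starting from the definition~\eqref{eq:upper-Fenchel-Moreau_conjugate} of the conjugate with respect to~$-\couplingbis$ and using the correspondence~\eqref{eq:lower_upper_addition_minus}, I would write $-\couplingbis = \np{-\couplingbis_1} \UppPlus \np{-\couplingbis_2}$ and $-\fonctionprimalbis = \np{-\fonctionprimalbis_1} \LowPlus \np{-\fonctionprimalbis_2}$. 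The integrand of the defining supremum is then $\bp{\np{-\couplingbis_1}\UppPlus\np{-\couplingbis_2}} \LowPlus \np{-\fonctionprimalbis_1} \LowPlus \np{-\fonctionprimalbis_2}$, and two applications of the inequality~\eqref{eq:lower_upper_addition_inequality} (together with commutativity and monotonicity of the Moreau additions) regroup it, at the cost of an inequality, as $\bp{\np{-\couplingbis_1}\LowPlus\np{-\fonctionprimalbis_1}} \UppPlus \bp{\np{-\couplingbis_2}\LowPlus\np{-\fonctionprimalbis_2}}$. Since the first bracket depends only on~$\primalbis_1$ and the second only on~$\primalbis_2$, monotonicity~\eqref{eq:upper_addition_leq} of~$\UppPlus$ yields $\sup_{\primalbis_1,\primalbis_2}\bp{u\np{\primalbis_1} \UppPlus v\np{\primalbis_2}} \leq \bp{\sup_{\primalbis_1} u} \UppPlus \bp{\sup_{\primalbis_2} v}$, and the two resulting suprema are exactly $\SFM{\fonctionprimalbis_1}{(-\couplingbis_1)}\np{\dualbis_1}$ and $\SFM{\fonctionprimalbis_2}{(-\couplingbis_2)}\np{\dualbis_2}$. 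Feeding this key inequality into the bound obtained from Theorem~\ref{th:main}, using monotonicity of~$\UppPlus$ and of $\inf_{\np{\dualbis_1,\dualbis_2}}$ together with the associativity and commutativity of~$\UppPlus$ to recognise the ${\convolution}^{\sharp}$-inf-convolution, would conclude the proof.

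I expect the main obstacle to be precisely this key inequality. The conjugate to be split concerns the \emph{upper} sum $\fonctionprimalbis_1 \UppPlus \fonctionprimalbis_2$ taken against the \emph{lower} sum coupling $\couplingbis_1 \LowPlus \couplingbis_2$; because the lower and upper Moreau additions do not commute freely, the separation into one conjugate over~$\PRIMALBIS_1$ and one over~$\PRIMALBIS_2$ can only hold as an inequality. This is consistent with the Proposition asserting~\eqref{eq:main_inf-convolution} as an implication between inequalities rather than equalities; upgrading it to an equality would require the extra structure (finiteness, or a strong-duality/biconjugation assumption) isolated in Corollaries~\ref{cor:main==real-valued_couplings}--\ref{cor:main==bilinear_coupling}.
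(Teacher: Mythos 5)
Your proposal is correct and follows essentially the same route as the paper's proof: apply Theorem~\ref{th:main} with the product sets $\PRIMALBIS_1\times\PRIMALBIS_2$ and $\DUALBIS_1\times\DUALBIS_2$, the coupling $\couplingbis_1\LowPlus\couplingbis_2$, the kernel $\tildetildeconvolution$ and the function $\fonctionprimalbis_1\UppPlus\fonctionprimalbis_2$, then control the dual term via the splitting inequality $\SFM{\fonctionprimalbis}{-\np{\couplingbis_1\LowPlus\couplingbis_2}}\np{\dualbis_1,\dualbis_2}\leq\SFM{\fonctionprimalbis_1}{(-\couplingbis_1)}\np{\dualbis_1}\UppPlus\SFM{\fonctionprimalbis_2}{(-\couplingbis_2)}\np{\dualbis_2}$, which is exactly the paper's inequality~\eqref{eq:psi-additive}. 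Your derivation of that splitting via two applications of~\eqref{eq:lower_upper_addition_inequality} is a correct (and slightly more explicit) proof of the preliminary inequality~\eqref{eq:psi-additive_preliminary_inequality} that the paper leaves to the reader.
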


\begin{proof}
  The left hand side assumption in~\eqref{eq:main_inf-convolution}
  can be rewritten as 
  \begin{equation}
    \fonctionprimal\np{\primal} \geq 
    \inf_{\primalbis_1 \in \PRIMALBIS_1 , \primalbis_2 \in \PRIMALBIS_2 }
    \bgp{ \tildetildeconvolution\bp{\primal,\primalbis_1,\primalbis_2} \UppPlus
      \Bp{ \fonctionprimalbis_1\np{\primalbis_1} \UppPlus 
        \fonctionprimalbis_2\np{\primalbis_2} } } 
    \eqsepv \forall \primal \in \PRIMAL 
\eqfinp
\label{eq:left_hand_side_assumption_in_main_inf-convolution}
  \end{equation}
  Now, we apply Theorem~\ref{th:main} with 
  \begin{align*}
    \PRIMALBIS &= \PRIMALBIS_1 \times \PRIMALBIS_2 \eqsepv
 \DUALBIS = \DUALBIS_1 \times \DUALBIS_2 
\\
\couplingbis\bp{\np{\primalbis_1,\primalbis_2},\np{\dualbis_1,\dualbis_2}}
  &=
\couplingbis_1\np{\primalbis_1,\dualbis_1} \LowPlus
  \couplingbis_2\np{\primalbis_2,\dualbis_2} 
\\
\fonctionprimalbis\np{\primalbis_1,\primalbis_2}
&=
\fonctionprimalbis_1\np{\primalbis_1} \UppPlus 
  \fonctionprimalbis_2\np{\primalbis_2} 
\mtext{ and } \kernel = 
\tildetildeconvolution \mtext{ by~\eqref{eq:tildetildeconvolution}. }
%\eqfinp 
  \end{align*}
  We first prove that 
  \begin{equation}
    \SFM{\fonctionprimalbis}{-\np{\couplingbis_1\LowPlus\couplingbis_2}}
    \np{\dualbis_1,\dualbis_2} \leq
    \Bp{ \SFM{\fonctionprimalbis_1}{(-\couplingbis_1)}\np{\dualbis_1}
      \UppPlus 
      \SFM{\fonctionprimalbis_2}{(-\couplingbis_2)}\np{\dualbis_2}} 
    \eqfinp
    \label{eq:psi-additive}
  \end{equation}
For this, we let the reader check that 
the following preliminary inequality always holds true
\begin{equation}
  \bp{ -\np{u_1 \LowPlus u_2} } \LowPlus \bp{ -\np{v_1 \UppPlus v_2} }
\leq
  \bp{ \np{-u_1} \LowPlus \np{-v_1} } \UppPlus 
\bp{ \np{-u_2} \LowPlus \np{-v_2} } 
\eqfinp
    \label{eq:psi-additive_preliminary_inequality}
\end{equation}
  Then, we have that 
  \begin{align*}
    \SFM{\fonctionprimalbis}{-\np{\couplingbis_1\LowPlus\couplingbis_2}}
    \np{\dualbis_1,\dualbis_2} 
    &= 
      \sup_{\primalbis_1\in \PRIMALBIS_1,\primalbis_2\in \PRIMALBIS_2}
    \bgp{
\Bp{ -\bp{\couplingbis_1\np{\primalbis_1,\dualbis_1}
      \LowPlus\couplingbis_2\np{\primalbis_2,\dualbis_2} } }
      \LowPlus 
 \Bp{ - \bp{ \fonctionprimalbis_1\np{\primalbis_1} \UppPlus 
      \fonctionprimalbis_2\np{\primalbis_2} } } }
      \nonumber 
    % \\
    % &= 
    %   \sup_{\primalbis_1\in \PRIMALBIS_1,\primalbis_2\in \PRIMALBIS_2}
    %   \Bgp{
    %   \Bp{
    %   \bp{ {- \couplingbis_1\np{\primalbis_1,\dualbis_1}} \UppPlus {- \couplingbis_2\np{\primalbis_2,\dualbis_2}}}
    %   \LowPlus \np{- \fonctionprimalbis_2\np{\primalbis_2}}} \LowPlus 
    %   \np{ - \fonctionprimalbis_1\np{\primalbis_1}}} 
    %   \nonumber 
    %   \intertext{by~\eqref{eq:lower_upper_addition_minus} 
    %   and associativity of~$\LowPlus$}
    % % 
    % &\leq 
    %   \sup_{\primalbis_1\in \PRIMALBIS_1,\primalbis_2\in \PRIMALBIS_2}
    %   \Bgp{
    %   \Bp{ - \couplingbis_1\np{\primalbis_1,\dualbis_1} \UppPlus 
    %   \bp{ - \couplingbis_2\np{\primalbis_2,\dualbis_2} \LowPlus 
    %   \np{- \fonctionprimalbis_2\np{\primalbis_2}}}}
    %   \LowPlus 
    %   \np{ - \fonctionprimalbis_1\np{\primalbis_1}}}
    %   \tag{by~\eqref{eq:lower_upper_addition_inequality}} 
    %   \nonumber 
    % \\
    % &\leq 
    %   \sup_{\primalbis_1\in \PRIMALBIS_1,\primalbis_2\in \PRIMALBIS_2}
    %   \Bp{
    %   \bp{ - \couplingbis_1\np{\primalbis_1,\dualbis_1} \LowPlus 
    %   \np{-\fonctionprimalbis_1\np{\primalbis_1}}}
    %   \UppPlus \bp{ - \couplingbis_2\np{\primalbis_2,\dualbis_2} 
    %   \LowPlus \np{- \fonctionprimalbis_2\np{\primalbis_2}}}}
    %   \tag{by~\eqref{eq:lower_upper_addition_inequality}}
    %   \nonumber
    \\
    &\leq
      \sup_{\primalbis_1\in \PRIMALBIS_1} 
      \Bp{
      \bp{ -\couplingbis_1\np{\primalbis_1,\dualbis_1} }
      \LowPlus \bp{ -\fonctionprimalbis_1\np{\primalbis_1} } }
      \UppPlus 
      \sup_{\primalbis_2\in \PRIMALBIS_2} 
      \Bp{
      \bp{ -\couplingbis_2\np{\primalbis_2,\dualbis_2} 
      \LowPlus \bp{ -\fonctionprimalbis_2\np{\primalbis_2} } } }
\intertext{ by the preliminary 
inequality~\eqref{eq:psi-additive_preliminary_inequality}}
      \nonumber 
    &=  
      \SFM{\fonctionprimalbis_1}{(-\couplingbis_1)}\np{\dualbis_1}
      \UppPlus 
      \SFM{\fonctionprimalbis_2}{(-\couplingbis_2)}\np{\dualbis_2} 
      \eqfinp
      \nonumber 
  \end{align*}
  We now obtain, by~\eqref{eq:main} 
applied with~\eqref{eq:left_hand_side_assumption_in_main_inf-convolution},
  \begin{subequations}
    \begin{align}
      \SFM{\fonctionprimal}{\coupling}\np{\dual} 
      &\leq 
        \inf_{\dualbis_1 \in \DUALBIS_1 , \dualbis_2 \in \DUALBIS_2 }
        \bgp{ \SFM{\tildetildeconvolution}%
        {\SumCoupling{\coupling}{\np{\couplingbis_1\LowPlus\couplingbis_2}} }
        \np{\dual,\dualbis_1,\dualbis_2} \UppPlus
        \SFM{\fonctionprimalbis}{-\np{\couplingbis_1\LowPlus\couplingbis_2}}
        \np{\dualbis_1,\dualbis_2} } 
        \nonumber 
        \nonumber \\
      &\leq
        \inf_{\dualbis_1 \in \DUALBIS_1 , \dualbis_2 \in \DUALBIS_2 }
        \bgp{ \SFM{\tildetildeconvolution}%
        {\SumCoupling{\coupling}{\np{\couplingbis_1\LowPlus\couplingbis_2}} }
        \np{\dual,\dualbis_1,\dualbis_2} \UppPlus
        \Bp{ \SFM{\fonctionprimalbis_1}{(-\couplingbis_1)}\np{\dualbis_1}
        \UppPlus 
        \SFM{\fonctionprimalbis_2}{(-\couplingbis_2)}\np{\dualbis_2} } } 
        \tag{by~\eqref{eq:psi-additive}}
        \nonumber 
      \\
      &=
        \bp{ \SFM{\fonctionprimalbis_1}{(-\couplingbis_1)}  
        \overset{ {\convolution}^{\sharp} }{\square} 
        \SFM{\fonctionprimalbis_2}{(-\couplingbis_2)} }\np{\dual} 
        \nonumber
    \end{align}
  \end{subequations}
by definition~\eqref{eq:inf-convolution} of the generalized inf-convolution,
and where the kernel~\( \tildetildeconvolution \) 
is defined in~\eqref{eq:tildetildeconvolution}.

This ends the proof.
\end{proof}

We check our result in Theorem~\ref{th:main} on
the classical inf-convolution.
Suppose that $\PRIMAL=\PRIMALBIS = \RR^n$
and $\DUAL=\DUALBIS = \RR^n$ equipped with the scalar product \( \proscal{}{} \). 
The conjugacy operator on functions is denoted by$\;^\star$.
For the \emph{characteristic function} of a subset
\( \Uncertain \subset \UNCERTAIN \) of a set~$\UNCERTAIN$, we adopt the notation
\begin{equation}
  \delta_{\Uncertain} : \UNCERTAIN \to \{ 0, +\infty \} \eqsepv
  \delta_{\Uncertain}\np{\uncertain} =  
   \begin{cases}
      0 &\text{ if } \uncertain \in \Uncertain \eqfinv \\ 
      +\infty &\text{ if } \uncertain \not\in \Uncertain \eqfinp
    \end{cases} 
\end{equation}
Now, when we take 
\begin{subequations}
  \begin{equation}
    \convolution\np{\primalbis_1,\primal,\primalbis_2}=
    \delta_{\primalbis_1 \UppPlus \primalbis_2}\np{\primal}
    \eqsepv
    \coupling\np{\primal,\dual} = \proscal{\primal}{\dual} 
    \eqsepv
    \couplingbis_i\np{\primalbis_i,\dualbis_i} = -\proscal{\primalbis_i}{\dualbis_i} 
    \eqsepv i=1,2 
    \eqfinv
  \end{equation}
  we find that, by~\eqref{eq:dual-convolution}
  % \begin{equation}
  %   \SFM{\convolution\np{\primalbis_1,\cdot,\primalbis_2}}{\coupling}\np{\dual}
  %   = \proscal{\primalbis_1}{\dual} \UppPlus \proscal{\primalbis_2}{\dual} 
  %   \eqfinv
  % \end{equation}
  % and also that 
  \begin{equation}
    {\convolution}^{\sharp}\np{\dualbis_1,\dual,\dualbis_2}
    = \delta_{\dualbis_1}\np{\dual} \LowPlus \delta_{\dualbis_1}\np{\dual} 
    \eqsepv
    \bp{ \SFM{\fonctionprimalbis_1}{(-\couplingbis_1)}  
      \overset{ {\convolution}^{\sharp} }{\square} 
      \SFM{\fonctionprimalbis_2}{(-\couplingbis_2)} }\np{\dual} 
    = \fonctionprimalbis_1^\star\np{\dual} 
    \UppPlus \fonctionprimalbis_2^\star\np{\dual}
    \eqfinp
  \end{equation}
\end{subequations}
We conclude with~\eqref{eq:main_inf-convolution} 
that we indeed obtain the well known property of the inf-convolution:
\( \fonctionprimal \ge \fonctionprimalbis_1 \square \fonctionprimalbis_2 
\Rightarrow %implies that 
\fonctionprimal^\star \le 
(\fonctionprimalbis_1 \square \fonctionprimalbis_2)^\star 
= \fonctionprimalbis_1^\star + \fonctionprimalbis_2^\star \).
\bigskip

To end up, we provide an expression of the inf-convolution as a 
Fenchel-Moreau conjugate, and we obtain an equality 
with Fenchel-Moreau conjugates
involving three coupling functions.

\begin{proposition}
  The $\convolution$-\emph{inf-convolution} 
in~\eqref{eq:inf-convolution} is given by
  \begin{equation}
    \fonctionprimalbis_1  \overset{\convolution}{\square} \fonctionprimalbis_2 
    = - \SFM{\np{\fonctionprimalbis_1 \UppPlus \fonctionprimalbis_2}}%
    {-\tildeconvolution} 
    \eqfinp
    \label{eq:kernel_inf-convolution}
  \end{equation}     
\end{proposition}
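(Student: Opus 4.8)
The plan is to establish~\eqref{eq:kernel_inf-convolution} by a direct computation that unfolds the definition of the Fenchel-Moreau conjugate on the right hand side and matches it, term by term, with the definition~\eqref{eq:inf-convolution} of the $\convolution$-inf-convolution. The only tools needed are the definition~\eqref{eq:upper-Fenchel-Moreau_conjugate} of the conjugate, the definition~\eqref{eq:tildeconvolution} of the coupling~$\tildeconvolution$, the elementary identity $-\sup = \inf\np{-\,\cdot\,}$, and the correspondence~\eqref{eq:lower_upper_addition_minus} between the lower and upper Moreau additions under $a \mapsto -a$; no hypothesis on the sets or on the functions is required.

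First, I would fix $\primal \in \PRIMAL$ and write out $\SFM{\np{\fonctionprimalbis_1 \UppPlus \fonctionprimalbis_2}}{-\tildeconvolution}\np{\primal}$ from the definition~\eqref{eq:upper-Fenchel-Moreau_conjugate} of the conjugate, taken with respect to the coupling~$-\tildeconvolution$ between $\PRIMALBIS_1 \times \PRIMALBIS_2$ and $\PRIMAL$. This yields a supremum over $\np{\primalbis_1,\primalbis_2}$ of $\bp{ -\tildeconvolution\bp{\primal,\np{\primalbis_1,\primalbis_2}} } \LowPlus \bp{ -\np{\fonctionprimalbis_1\np{\primalbis_1} \UppPlus \fonctionprimalbis_2\np{\primalbis_2}} }$, which by~\eqref{eq:tildeconvolution} becomes a supremum of $\bp{ -\convolution\np{\primalbis_1,\primal,\primalbis_2} } \LowPlus \bp{ -\np{\fonctionprimalbis_1\np{\primalbis_1} \UppPlus \fonctionprimalbis_2\np{\primalbis_2}} }$.

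Second, I would negate both sides. Using $-\sup = \inf\np{-\,\cdot\,}$, the left hand side of~\eqref{eq:kernel_inf-convolution} becomes an infimum over $\np{\primalbis_1,\primalbis_2}$ of the negated summand. Applying the correspondence~\eqref{eq:lower_upper_addition_minus}, namely $-\bp{ a \LowPlus b } = \np{-a} \UppPlus \np{-b}$, turns the negated lower sum into $\convolution\np{\primalbis_1,\primal,\primalbis_2} \UppPlus \np{\fonctionprimalbis_1\np{\primalbis_1} \UppPlus \fonctionprimalbis_2\np{\primalbis_2}}$. Finally, associativity and commutativity of~$\UppPlus$ rearrange this into $\fonctionprimalbis_1\np{\primalbis_1} \UppPlus \convolution\np{\primalbis_1,\primal,\primalbis_2} \UppPlus \fonctionprimalbis_2\np{\primalbis_2}$, and taking the infimum over $\np{\primalbis_1,\primalbis_2}$ recovers exactly the definition~\eqref{eq:inf-convolution} of $\bp{\fonctionprimalbis_1 \overset{\convolution}{\square} \fonctionprimalbis_2}\np{\primal}$.

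The computation has no genuine obstacle; the one point requiring care is the bookkeeping between the two Moreau additions. Because $-\tildeconvolution$ and $-\np{\fonctionprimalbis_1 \UppPlus \fonctionprimalbis_2}$ enter the conjugate through the lower addition~$\LowPlus$, whereas the inf-convolution~\eqref{eq:inf-convolution} is built with the upper addition~$\UppPlus$, one must invoke the correspondence~\eqref{eq:lower_upper_addition_minus} at the correct place and verify that the sign flip does not alter the value when $+\infty$ and $-\infty$ meet. Since~\eqref{eq:lower_upper_addition_minus} is an exact identity valid on all of~$\barRR$, no case distinction or finiteness assumption is needed, and the chain of equalities holds for arbitrary extended-real-valued data.
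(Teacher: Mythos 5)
Your computation is correct and is essentially the paper's own proof run in the opposite direction: the paper starts from the definition~\eqref{eq:inf-convolution}, rewrites $\convolution$ as $\tildeconvolution$, and converts $\inf\,\UppPlus$ into $-\sup\,\LowPlus$ via~\eqref{eq:lower_upper_addition_minus} to land on $-\SFM{\np{\fonctionprimalbis_1 \UppPlus \fonctionprimalbis_2}}{-\tildeconvolution}$, whereas you unfold the conjugate and negate back. Your remark that~\eqref{eq:lower_upper_addition_minus} is an exact identity on all of~$\barRR$, so that no finiteness hypotheses are needed, is accurate and matches the unconditional statement of the proposition.
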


\begin{proof}
  \begin{subequations}
    For any \( \primal \in \PRIMAL \), we have that 
    \begin{align*}
      \bp{\fonctionprimalbis_1 \overset{\convolution}{\square} 
\fonctionprimalbis_2}  \np{\primal} 
      &=
        \inf_{\primalbis_1 \in \PRIMALBIS_1 , \primalbis_2 \in \PRIMALBIS_2 }
        \bgp{ \fonctionprimalbis_1\np{\primalbis_1} \UppPlus 
        \convolution\np{\primalbis_1,\primal,\primalbis_2} \UppPlus
        \fonctionprimalbis_2\np{\primalbis_2} }
\intertext{ by definition~\eqref{eq:inf-convolution} 
of the generalized inf-convolution }
      &=
        \inf_{\primalbis_1 \in \PRIMALBIS_1 , \primalbis_2 \in \PRIMALBIS_2 }
        \Bp{ \tildeconvolution\bp{\primal,\np{\primalbis_1,\primalbis_2}} 
\UppPlus
        \bp{ \fonctionprimalbis_1\np{\primalbis_1} \UppPlus 
        \fonctionprimalbis_2\np{\primalbis_2} } } 
\intertext{ by definition~\eqref{eq:tildeconvolution} 
of the coupling function $\tildeconvolution$ }
      &=
        - \sup_{\primalbis_1 \in \PRIMALBIS_1 , \primalbis_2 \in \PRIMALBIS_2 }
        \bgp{ \Bp{ -\tildeconvolution\bp{\primal,\np{\primalbis_1,\primalbis_2}} } 
        \LowPlus
        \Bp{-\bp{ \fonctionprimalbis_1\np{\primalbis_1} \UppPlus 
        \fonctionprimalbis_2\np{\primalbis_2} } } } \\
      &=
        - \bp{\SFM{\np{\fonctionprimalbis_1 \UppPlus \fonctionprimalbis_2}}%
        {-\tildeconvolution}}
        \np{\primal}  \eqfinp
    \end{align*}
  \end{subequations}
This ends the proof.
\end{proof}

\begin{proposition}
  If there exist two {coupling} functions
  \begin{equation}
    \couplingter_1 : \DUAL \times \PRIMALBIS_1 \to \barRR \eqsepv
    \couplingter_2 : \DUAL \times \PRIMALBIS_2 \to \barRR \eqfinv
  \end{equation}
  such that the $\coupling$-Fenchel-Moreau conjugate of 
the convoluting function~$\convolution$ splits as 
  \begin{equation}
    % \SFM{\kernel\np{\cdot,\primalbis_1,\primalbis_2}}{\coupling}\np{\dual}
    \SFM{\convolution\np{\primalbis_1,\cdot,\primalbis_2}}{\coupling}\np{\dual}
    = \couplingter_1\np{\dual,\primalbis_1} \LowPlus
    \couplingter_2\np{\dual,\primalbis_2} \eqsepv
    \forall \np{\dual,\primalbis_1,\primalbis_2} \in 
    \DUAL \times \PRIMALBIS_1 \times \PRIMALBIS_2 
    \eqfinv
\label{eq:splits}
  \end{equation}
  then the $\coupling$-Fenchel-Moreau conjugate of 
the inf-convolution 
\( \fonctionprimalbis_1 \overset{\convolution}{\square} 
        \fonctionprimalbis_2 \) is given by a sum as
  \begin{equation}
    \SFM{\bp{\fonctionprimalbis_1 \overset{\convolution}{\square} 
        \fonctionprimalbis_2}}{\coupling} =
    \SFM{\fonctionprimalbis_1}{\couplingter_1} \LowPlus
    \SFM{\fonctionprimalbis_2}{\couplingter_2} 
    \eqfinp
  \end{equation}
\end{proposition}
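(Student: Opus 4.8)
The plan is to compute the left-hand side $\SFM{\bp{\fonctionprimalbis_1 \overset{\convolution}{\square} \fonctionprimalbis_2}}{\coupling}\np{\dual}$ directly from the definitions and to show that it coincides, term by term, with $\SFM{\fonctionprimalbis_1}{\couplingter_1}\np{\dual} \LowPlus \SFM{\fonctionprimalbis_2}{\couplingter_2}\np{\dual}$, in the spirit of the new proof of Corollary~\ref{cor:main==extended_couplings}. First I would unfold the $\coupling$-conjugate of the inf-convolution by its definition~\eqref{eq:upper-Fenchel-Moreau_conjugate}, then insert the definition~\eqref{eq:inf-convolution} of the $\convolution$-inf-convolution. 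Using $-\inf = \sup -$ and the distributivity of $\sup$ with respect to $\LowPlus$ (property~\eqref{eq:lower_addition_sup}), I would merge the three suprema into a single supremum over $\np{\primal,\primalbis_1,\primalbis_2} \in \PRIMAL \times \PRIMALBIS_1 \times \PRIMALBIS_2$.

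At that stage the integrand is the negative of the triple upper sum $\fonctionprimalbis_1\np{\primalbis_1} \UppPlus \convolution\np{\primalbis_1,\primal,\primalbis_2} \UppPlus \fonctionprimalbis_2\np{\primalbis_2}$, so I would apply the correspondence~\eqref{eq:lower_upper_addition_minus} between $\LowPlus$ and $\UppPlus$ (together with associativity) to rewrite it as $\bp{-\fonctionprimalbis_1\np{\primalbis_1}} \LowPlus \bp{-\convolution\np{\primalbis_1,\primal,\primalbis_2}} \LowPlus \bp{-\fonctionprimalbis_2\np{\primalbis_2}}$, combined with the coupling term $\coupling\np{\primal,\dual}$. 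The crux is then to perform the supremum over $\primal$ first: since the terms $-\fonctionprimalbis_1\np{\primalbis_1}$ and $-\fonctionprimalbis_2\np{\primalbis_2}$ do not depend on $\primal$, distributivity~\eqref{eq:lower_addition_sup} lets me pull $\sup_{\primal}$ inside so that it acts only on $\coupling\np{\primal,\dual} \LowPlus \bp{-\convolution\np{\primalbis_1,\primal,\primalbis_2}}$, which is precisely $\SFM{\convolution\np{\primalbis_1,\cdot,\primalbis_2}}{\coupling}\np{\dual}$. Here I would invoke the splitting hypothesis~\eqref{eq:splits} to replace this inner conjugate by $\couplingter_1\np{\dual,\primalbis_1} \LowPlus \couplingter_2\np{\dual,\primalbis_2}$.

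Finally, by commutativity and associativity of $\LowPlus$ I would regroup the surviving expression so that the $\primalbis_1$-terms $\couplingter_1\np{\dual,\primalbis_1} \LowPlus \bp{-\fonctionprimalbis_1\np{\primalbis_1}}$ and the $\primalbis_2$-terms $\couplingter_2\np{\dual,\primalbis_2} \LowPlus \bp{-\fonctionprimalbis_2\np{\primalbis_2}}$ are grouped separately, and then split the joint supremum over $\np{\primalbis_1,\primalbis_2}$ into two independent suprema by applying distributivity~\eqref{eq:lower_addition_sup} twice; these are exactly $\SFM{\fonctionprimalbis_1}{\couplingter_1}\np{\dual}$ and $\SFM{\fonctionprimalbis_2}{\couplingter_2}\np{\dual}$, which yields the claim. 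I expect the main obstacle to be careful bookkeeping rather than conceptual depth: one must verify that each merging, pulling-in and splitting of suprema is a genuine \emph{equality} in the Moreau arithmetic, which is guaranteed because $\sup$ is truly distributive (not merely subdistributive) over $\LowPlus$ by~\eqref{eq:lower_addition_sup}, and one must respect the argument-order convention of the couplings $\couplingter_1$ and $\couplingter_2$ so that each conjugate is read as a function of $\dual \in \DUAL$.
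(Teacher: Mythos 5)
Your proof is correct and follows essentially the same route as the paper's: after the opening, both arguments reduce to the same triple supremum of \( \coupling\np{\primal,\dual} \LowPlus \bp{-\convolution\np{\primalbis_1,\primal,\primalbis_2}} \LowPlus \bp{-\fonctionprimalbis_1\np{\primalbis_1}} \LowPlus \bp{-\fonctionprimalbis_2\np{\primalbis_2}} \), take the supremum over \(\primal\) first to recognize \( \SFM{\convolution\np{\primalbis_1,\cdot,\primalbis_2}}{\coupling}\np{\dual} \), apply the splitting hypothesis~\eqref{eq:splits}, and factor the remaining supremum via~\eqref{eq:lower_addition_sup}. The only (cosmetic) difference is that the paper reaches the triple supremum by invoking the representation~\eqref{eq:kernel_inf-convolution} together with the composition formula~\eqref{eq:composition_of_conjugates}, whereas you inline those two lemmas by expanding the definitions directly; both yield exact equalities at every step.
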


\begin{proof}
We have that 
  \begin{subequations}
    \begin{align*}
      \SFM{\bp{\fonctionprimalbis_1 \overset{\convolution}{\square} 
      \fonctionprimalbis_2}}{\coupling}\np{\dual}
      &=  
        \SFM{ \bp{ - \SFM{\np{\fonctionprimalbis_1 \UppPlus 
\fonctionprimalbis_2}}%
        {-\tildeconvolution} } }{\coupling} \np{\dual}
\tag{ by~\eqref{eq:kernel_inf-convolution} }
      \\
      &=  
        \SFM{\np{\fonctionprimalbis_1 \UppPlus \fonctionprimalbis_2} }%
        { \np{-\tildeconvolution} \LowPlus \coupling} \np{\dual}
\tag{ by~\eqref{eq:composition_of_conjugates}} 
      \\
      &=  
        \sup_{\np{\primalbis_1,\primal,\primalbis_2} \in 
        \PRIMALBIS_1 \times \PRIMAL \times \PRIMALBIS_2 }
        \bgp{ 
        \bp{-\convolution\np{\primalbis_1,\primal,\primalbis_2} } 
        \LowPlus \coupling\np{\primal,\dual} 
        \LowPlus \Bp{-\bp{\fonctionprimalbis_1\np{\primalbis_1} 
        \UppPlus \fonctionprimalbis_2\np{\primalbis_2} } } }
      \\
      &=  
        \sup_{\np{\primalbis_1,\primalbis_2} \in 
        \PRIMALBIS_1 \times \PRIMALBIS_2 }
        \Bgp{ \sup_{\primal \in \PRIMAL}
        \bgp{ 
        \bp{-\convolution\np{\primalbis_1,\primal,\primalbis_2} } 
        \LowPlus \coupling\np{\primal,\dual} }
        \LowPlus \Bp{-\bp{\fonctionprimalbis_1\np{\primalbis_1} 
        \UppPlus \fonctionprimalbis_2\np{\primalbis_2} } } } 
      \\
      &=  
        \sup_{\np{\primalbis_1,\primalbis_2} \in 
        \PRIMALBIS_1 \times \PRIMALBIS_2 }
        \Bgp{ 
        \SFM{\convolution\np{\primalbis_1,\cdot,\primalbis_2}}{\coupling}\np{\dual}
        \LowPlus \Bp{-\bp{\fonctionprimalbis_1\np{\primalbis_1} 
        \UppPlus \fonctionprimalbis_2\np{\primalbis_2} } } } 
      \\
      &=  
        \sup_{\np{\primalbis_1,\primalbis_2} \in 
        \PRIMALBIS_1 \times \PRIMALBIS_2 }
        \Bgp{ 
        \Bp{ \couplingter_1\np{\dual,\primalbis_1} \LowPlus
        \couplingter_2\np{\dual,\primalbis_2} }
        \LowPlus \Bp{-\bp{\fonctionprimalbis_1\np{\primalbis_1} 
        \UppPlus \fonctionprimalbis_2\np{\primalbis_2} } } } 
\tag{ by assumption~\eqref{eq:splits} }
      \\
      &=  
        \sup_{\np{\primalbis_1,\primalbis_2} \in 
        \PRIMALBIS_1 \times \PRIMALBIS_2 }
        \Bgp{ 
        \couplingter_1\np{\dual,\primalbis_1} \LowPlus
        \couplingter_2\np{\dual,\primalbis_2} 
        \LowPlus \bp{-\fonctionprimalbis_1\np{\primalbis_1}}
        \LowPlus \bp{-\fonctionprimalbis_2\np{\primalbis_2} } }
      \\
      &=  
        \SFM{\fonctionprimalbis_1}{\couplingter_1}\np{\dual} \LowPlus
        \SFM{\fonctionprimalbis_2}{\couplingter_2}\np{\dual}
        \eqfinp
    \end{align*}
  \end{subequations}
This ends the proof.
\end{proof}

\subsection{Exchanging partial Fenchel-Moreau conjugates}

Let be given two ``primal'' sets $\PRIMAL$, $\PRIMALBIS$
and two ``dual'' sets $\DUAL$, $\DUALBIS$, 
together with two {coupling} functions
\begin{equation}
  \coupling : \PRIMAL \times \DUAL \to \barRR \eqsepv
  \couplingbis : \PRIMALBIS \times \DUALBIS \to \barRR \eqfinp 
\end{equation}

In~\eqref{eq:main}, all Fenchel-Moreau conjugates stand on the right side
of the implication. We show formulas where they appear on both sides.
\begin{subequations}
  For this purpose, for any \emph{exchange function}
  \begin{equation}
    \Exchange : \PRIMAL \times \DUALBIS \to \barRR \eqfinv
    \label{eq:ExchangePRIMALtimesDUALBIS}
  \end{equation}  
  we introduce the \emph{partial Fenchel-Moreau conjugates}
  \begin{align}
    \SFM{ \bp{ -\Exchange\np{\primal,\cdot} } }{\couplingbis}
    \np{\primalbis} 
    &= \sup_{\dualbis \in \DUALBIS} 
      \Bp{ \couplingbis\np{\primalbis,\dualbis}
      \LowPlus \Exchange\np{\primal,\dualbis} } 
      \eqsepv 
      \forall \np{\primal, \primalbis} \in \PRIMAL \times \PRIMALBIS 
      \eqfinv
      \label{eq:partial_Fenchel-Moreau_conjugate_couplingbis}
    \\
    \SFM{\Exchange\np{\cdot,\dualbis}}{\coupling}\np{\dual} 
    &= \sup_{\primal \in \PRIMAL}
      \Bp{ 
      \coupling\np{\primal,\dual} 
      \LowPlus  \bp{ - \Exchange\np{\primal,\dualbis} } } 
      \eqsepv \forall \np{\dual,\dualbis} \in \DUAL \times \DUALBIS 
      \eqfinp
      \label{eq:partial_Fenchel-Moreau_conjugate_coupling}
  \end{align}
\end{subequations}

We prove the following implication.
\begin{proposition}
  For any function 
  \( \Exchange : \PRIMAL \times \DUALBIS \to \barRR \),
  we have that 
  \begin{multline}
    \fonctionprimal\np{\primal} \geq \inf_{\primalbis \in \PRIMALBIS} \Bp{ 
      \SFM{ \Exchange\np{\primal,\cdot} }{\couplingbis}
      \np{\primalbis} 
      \UppPlus \fonctionprimalbis\np{\primalbis} } 
    \eqsepv \forall \primal \in \PRIMAL 
    \Rightarrow \\ 
    \SFM{\fonctionprimal}{\coupling}\np{\dual} 
    \leq 
    \inf_{\dualbis \in \DUALBIS} 
    \Bp{\SFM{ \bp{-\Exchange\np{\cdot,\dualbis}}}{\coupling}\np{\dual} 
      \UppPlus \SFM{\fonctionprimalbis}{-\couplingbis}\np{\dualbis} } 
    \eqsepv \forall \dual \in \DUAL 
    \eqfinp
  \end{multline}
\end{proposition}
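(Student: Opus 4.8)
The plan is to recognize the partial conjugate in the hypothesis as a kernel and then invoke Theorem~\ref{th:main}. Concretely, I would set \( \kernel\np{\primal,\primalbis} \defegal \SFM{\Exchange\np{\primal,\cdot}}{\couplingbis}\np{\primalbis} = \sup_{\dualbis \in \DUALBIS} \bp{ \couplingbis\np{\primalbis,\dualbis} \LowPlus \bp{-\Exchange\np{\primal,\dualbis}} } \), the Fenchel--Moreau $\couplingbis$-conjugate of the function $\Exchange\np{\primal,\cdot}$ on~$\DUALBIS$, read as a function on~$\PRIMALBIS$. With this choice the left-hand side assumption of the Proposition coincides with the left-hand side assumption of~\eqref{eq:main} for the kernel~$\kernel$, so Theorem~\ref{th:main} applies and gives \( \SFM{\fonctionprimal}{\coupling}\np{\dual} \leq \inf_{\dualbis \in \DUALBIS} \bp{ \SFM{\kernel}{\SumCoupling{\coupling}{\couplingbis}}\np{\dual,\dualbis} \UppPlus \SFM{\fonctionprimalbis}{-\couplingbis}\np{\dualbis} } \) for every $\dual \in \DUAL$.

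The crux is then to bound, for every pair $\np{\dual,\dualbis}$, the sum-coupling conjugate of~$\kernel$ by the partial conjugate of the conclusion, i.e. to prove that \( \SFM{\kernel}{\SumCoupling{\coupling}{\couplingbis}}\np{\dual,\dualbis} \leq \SFM{\bp{-\Exchange\np{\cdot,\dualbis}}}{\coupling}\np{\dual} = \sup_{\primal \in \PRIMAL} \bp{ \coupling\np{\primal,\dual} \LowPlus \Exchange\np{\primal,\dualbis} } \). After expanding the left-hand side by~\eqref{eq:product_conjugate}, I would use that, for the very $\dualbis$ fixed in the outer conjugate, the supremum defining~$\kernel$ dominates its $\dualbis$-term, $\kernel\np{\primal,\primalbis} \geq \couplingbis\np{\primalbis,\dualbis} \LowPlus \bp{-\Exchange\np{\primal,\dualbis}}$; by the correspondence~\eqref{eq:lower_upper_addition_minus} this reads \( -\kernel\np{\primal,\primalbis} \leq \bp{-\couplingbis\np{\primalbis,\dualbis}} \UppPlus \Exchange\np{\primal,\dualbis} \). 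Monotonicity of~$\LowPlus$ (property~\eqref{eq:lower_addition_leq}) together with the elementary inequality $a \LowPlus \bp{\bp{-a} \UppPlus b} \leq b$ then collapses the two $\couplingbis$ terms, yielding \( \couplingbis\np{\primalbis,\dualbis} \LowPlus \bp{-\kernel\np{\primal,\primalbis}} \leq \Exchange\np{\primal,\dualbis} \). Adding $\coupling\np{\primal,\dual}$ on the left and taking the supremum over $\np{\primal,\primalbis}$ --- whose right-hand side no longer depends on~$\primalbis$ --- gives the sought bound.

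Finally, chaining the two displays through monotonicity of~$\UppPlus$ and of the infimum over~$\dualbis$ produces the conclusion. The only delicate point is the arithmetic of Moreau's two additions at~$\pm\infty$: the elementary inequality $a \LowPlus \bp{\bp{-a} \UppPlus b} \leq b$ should be obtained from the mixed inequality~\eqref{eq:lower_upper_addition_inequality}, applied with middle entry $-a$, combined with $a \LowPlus \bp{-a} \leq 0$ and monotonicity~\eqref{eq:upper_addition_leq} of~$\UppPlus$; it must be checked over all of~$\barRR$, where it degenerates harmlessly to $-\infty \leq b$ whenever $a = \pm\infty$. No convexity, topology, or biconjugation hypothesis is needed, since only the one-sided implication is sought.
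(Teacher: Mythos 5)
Your proof is correct and follows essentially the same route as the paper: apply Theorem~\ref{th:main} with the kernel \( \kernel\np{\primal,\primalbis}=\SFM{\Exchange\np{\primal,\cdot}}{\couplingbis}\np{\primalbis} \) and then dominate \( \SFM{\kernel}{\SumCoupling{\coupling}{\couplingbis}}\np{\dual,\dualbis} \) by \( \SFM{\bp{-\Exchange\np{\cdot,\dualbis}}}{\coupling}\np{\dual} \), which is exactly the content of Lemma~\ref{lem:partial_Fenchel-Moreau_conjugate_coupling_kernel}. The only (cosmetic) difference is that you establish this bound directly from the elementary inequality \( a \LowPlus \bp{\np{-a}\UppPlus b}\leq b \) applied to the $\dualbis$-term of the supremum defining~$\kernel$, whereas the paper recognizes a biconjugate and invokes~\eqref{eq:Fenchel-Moreau_biconjugate_inequality}; the two arguments amount to the same computation.
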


\begin{proof}
  We use the following 
  Lemma~\ref{lem:partial_Fenchel-Moreau_conjugate_coupling_kernel}.
  We apply Theorem~\ref{th:main} with 
  the function
\( \kernel\np{\primal, \primalbis} =
    \SFM{ \bp{-\Exchange\np{\primal,\cdot}} }{\couplingbis}
    \np{\primalbis} \)
 defined by equality in the left hand side
  inequality in~\eqref{eq:partial_Fenchel-Moreau_conjugate_coupling_kernel}.
  Then, we insert the right hand side inequality 
  in~\eqref{eq:partial_Fenchel-Moreau_conjugate_coupling_kernel}
  % \begin{equation}
  %   \SFM{\kernel}{\SumCoupling{\coupling}{\couplingbis}}\np{\dual,\dualbis} \leq
  %   \SFM{\Exchange\np{\cdot,\dualbis}}{\coupling}\np{\dual} \eqsepv 
  %   \forall \np{\dual,\dualbis} \in \DUAL \times \DUALBIS 
  %   \label{hamiltonien-dual}
  % \end{equation}
  into implication~\eqref{eq:main}.
\end{proof}

\begin{lemma}
  For any function 
  \( \Exchange : \PRIMAL \times \DUALBIS \to \barRR \),
  we have that 
  \begin{multline}
    \kernel\np{\primal, \primalbis} \geq
    \SFM{ \bp{-\Exchange\np{\primal,\cdot}} }{\couplingbis}
    \np{\primalbis} \eqsepv  
    \forall \np{\primal, \primalbis} \in \PRIMAL \times \PRIMALBIS 
    \Rightarrow \\
    \SFM{\kernel}{\SumCoupling{\coupling}{\couplingbis}}\np{\dual,\dualbis} \leq
    \SFM{\Exchange\np{\cdot,\dualbis}}{\coupling}\np{\dual} \eqsepv 
    \forall \np{\dual,\dualbis} \in \DUAL \times \DUALBIS 
    \eqfinp
    \label{eq:partial_Fenchel-Moreau_conjugate_coupling_kernel}
  \end{multline}
  \label{lem:partial_Fenchel-Moreau_conjugate_coupling_kernel}
\end{lemma}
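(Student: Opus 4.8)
The plan is to fix a dual pair $\np{\dual,\dualbis} \in \DUAL \times \DUALBIS$ and to bound, term by term, the supremum that defines $\SFM{\kernel}{\SumCoupling{\coupling}{\couplingbis}}\np{\dual,\dualbis}$ in~\eqref{eq:product_conjugate}. First I would instantiate the left hand side hypothesis. By the definition~\eqref{eq:partial_Fenchel-Moreau_conjugate_couplingbis} of the partial conjugate,
\[
\kernel\np{\primal,\primalbis} \geq
\SFM{\bp{-\Exchange\np{\primal,\cdot}}}{\couplingbis}\np{\primalbis}
= \sup_{\dualbis \in \DUALBIS}
\Bp{\couplingbis\np{\primalbis,\dualbis} \LowPlus \Exchange\np{\primal,\dualbis}}
\eqfinv
\]
and since the right hand side is a supremum over $\DUALBIS$, retaining only the term indexed by our fixed $\dualbis$ yields the pointwise lower bound $\kernel\np{\primal,\primalbis} \geq \couplingbis\np{\primalbis,\dualbis} \LowPlus \Exchange\np{\primal,\dualbis}$ for all $\primal \in \PRIMAL$ and $\primalbis \in \PRIMALBIS$. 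Negating and using the correspondence~\eqref{eq:lower_upper_addition_minus} between $\LowPlus$ and $\UppPlus$ gives $-\kernel\np{\primal,\primalbis} \leq \bp{-\couplingbis\np{\primalbis,\dualbis}} \UppPlus \bp{-\Exchange\np{\primal,\dualbis}}$, so by monotonicity~\eqref{eq:lower_addition_leq} of $\LowPlus$ each summand of~\eqref{eq:product_conjugate} is at most $\coupling\np{\primal,\dual} \LowPlus \bp{ \couplingbis\np{\primalbis,\dualbis} \LowPlus \bp{ \bp{-\couplingbis\np{\primalbis,\dualbis}} \UppPlus \bp{-\Exchange\np{\primal,\dualbis}} } }$.

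The crux is then to collapse the two $\couplingbis$ terms. Abbreviating $b = \couplingbis\np{\primalbis,\dualbis}$, I would apply the mixed-addition inequality~\eqref{eq:lower_upper_addition_inequality} and monotonicity~\eqref{eq:upper_addition_leq} of $\UppPlus$ to obtain
\[
\couplingbis\np{\primalbis,\dualbis} \LowPlus
\Bp{ \bp{-\couplingbis\np{\primalbis,\dualbis}} \UppPlus
     \bp{-\Exchange\np{\primal,\dualbis}} }
\leq
\Bp{ \couplingbis\np{\primalbis,\dualbis} \LowPlus
     \bp{-\couplingbis\np{\primalbis,\dualbis}} } \UppPlus
\bp{-\Exchange\np{\primal,\dualbis}}
\leq
-\Exchange\np{\primal,\dualbis}
\eqfinv
\]
where the last step uses that $b \LowPlus \np{-b} \leq 0$ in the extended reals (with equality when $b$ is finite, and value $-\infty$ when $b = \pm\infty$). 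Verifying $b \LowPlus \np{-b} \leq 0$ across the infinite cases and threading it correctly through the mixed lower/upper additions is the only delicate point; everything else is bookkeeping in the $\np{\min,+}$-algebra.

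Finally, by associativity of $\LowPlus$ and monotonicity~\eqref{eq:lower_addition_leq} once more, the bound on each summand becomes $\coupling\np{\primal,\dual} \LowPlus \bp{-\Exchange\np{\primal,\dualbis}}$, which no longer depends on $\primalbis$. Taking the supremum over $\primal \in \PRIMAL$ and $\primalbis \in \PRIMALBIS$ on both sides, the left hand side is exactly $\SFM{\kernel}{\SumCoupling{\coupling}{\couplingbis}}\np{\dual,\dualbis}$ by~\eqref{eq:product_conjugate}, while the right hand side reduces to $\sup_{\primal \in \PRIMAL} \Bp{\coupling\np{\primal,\dual} \LowPlus \bp{-\Exchange\np{\primal,\dualbis}}} = \SFM{\Exchange\np{\cdot,\dualbis}}{\coupling}\np{\dual}$ by the definition~\eqref{eq:partial_Fenchel-Moreau_conjugate_coupling}. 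Since $\np{\dual,\dualbis}$ was arbitrary, this is precisely the right hand side inequality in~\eqref{eq:partial_Fenchel-Moreau_conjugate_coupling_kernel}, completing the proof.
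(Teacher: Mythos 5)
Your argument is correct. It reaches the same reduction as the paper --- bounding each term of the supremum in~\eqref{eq:product_conjugate} by \( \coupling\np{\primal,\dual} \LowPlus \bp{-\Exchange\np{\primal,\dualbis}} \) --- but by a different middle step. The paper keeps the partial conjugate \( \SFM{\bp{-\Exchange\np{\primal,\cdot}}}{\couplingbis} \) intact, pulls the supremum over~\( \primalbis \) inside, recognizes \( \sup_{\primalbis}\Bp{\couplingbis\np{\primalbis,\dualbis} \LowPlus \bp{-\SFM{\bp{-\Exchange\np{\primal,\cdot}}}{\couplingbis}\np{\primalbis}}} \) as the biconjugate \( \SFMbi{\bp{-\Exchange\np{\primal,\cdot}}}{\couplingbis}\np{\dualbis} \), and invokes~\eqref{eq:Fenchel-Moreau_biconjugate_inequality}. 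You instead restrict the supremum over \( \DUALBIS \) in~\eqref{eq:partial_Fenchel-Moreau_conjugate_couplingbis} to the single fixed index~\( \dualbis \) and perform the cancellation \( \couplingbis\np{\primalbis,\dualbis} \LowPlus \bp{-\couplingbis\np{\primalbis,\dualbis}} \leq 0 \) by hand, via~\eqref{eq:lower_upper_addition_inequality}, \eqref{eq:lower_substraction_le_zero} and the monotonicity properties \eqref{eq:lower_addition_leq}, \eqref{eq:upper_addition_leq}. Since the proof of the biconjugate inequality in the appendix rests on exactly these two facts, you have in effect inlined that lemma rather than cited it: your version is pointwise, avoids any interchange of suprema, and forces the extended-real bookkeeping into the open (which you handle correctly), whereas the paper's version is shorter and more modular because it reuses \eqref{eq:Fenchel-Moreau_biconjugate_inequality} as a black box. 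The only cosmetic point worth adding is that identifying \( \sup_{\primal \in \PRIMAL, \primalbis \in \PRIMALBIS} \) of a \( \primalbis \)-independent quantity with \( \sup_{\primal \in \PRIMAL} \) uses \( \PRIMALBIS \neq \emptyset \); in the degenerate case the left-hand side of the conclusion is \( -\infty \) and there is nothing to prove.
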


\begin{proof}
  Supposing that 
  \begin{equation}
    \kernel\np{\primal, \primalbis} \geq
    \SFM{ \bp{-\Exchange\np{\primal,\cdot} } }{\couplingbis}
    \np{\primalbis} \eqsepv  
    \forall \np{\primal, \primalbis} \in \PRIMAL \times \PRIMALBIS 
    \eqfinv 
    \label{eq:ExchangePRIMALtimesDUALBISkernel}
  \end{equation}
  we calculate, for all \( \np{\dual,\dualbis} \in \DUAL \times \DUALBIS \), 
  \begin{subequations}
    \begin{align*}
      \SFM{\kernel}{\SumCoupling{\coupling}{\couplingbis}}\np{\dual,\dualbis} 
      &= \sup_{\primal \in \PRIMAL, \primalbis \in \PRIMALBIS} 
        \Bp{ 
        \coupling\np{\primal,\dual} \LowPlus 
        \couplingbis\np{\primalbis,\dualbis}
        \LowPlus \bp{ - \kernel \np{\primal, \primalbis}}} 
        \intertext{ by the definition~\eqref{eq:product_conjugate}
        of~$\SFM{\kernel}{\SumCoupling{\coupling}{\couplingbis}}$} 
      & \leq
        \sup_{\primal \in \PRIMAL, \primalbis \in \PRIMALBIS} 
        \Bp{ 
        \coupling\np{\primal,\dual} \LowPlus 
        \couplingbis\np{\primalbis,\dualbis}
        \LowPlus 
        \SFM{ \Exchange\np{\primal,\cdot}
        }{\couplingbis} \np{\primalbis} }
        \intertext{ by inequality~\eqref{eq:ExchangePRIMALtimesDUALBISkernel} 
        for~$\kernel$ }
      & =
        \sup_{\primal \in \PRIMAL} \Bgp{
        \coupling\np{\primal,\dual} \LowPlus 
        \sup_{\primalbis \in \PRIMALBIS} 
        \bgp{ 
        \couplingbis\np{\primalbis,\dualbis}
        \LowPlus  \Bp{ - \bp{ -  
        \SFM{ \Exchange\np{\primal,\cdot}
        }{\couplingbis} \np{\primalbis}} } } }
        \intertext{ by the property~\eqref{eq:lower_addition_sup} that the 
operator~$\sup$ is ``distributive'' with respect to~$\LowPlus$ }
      & =
        \sup_{\primal \in \PRIMAL} \bgp{
        \coupling\np{\primal,\dual} \LowPlus 
        \SFM{ \bp{-\Exchange\np{\primal,\cdot}}
        }{\couplingbis\couplingbis}\np{\dualbis}
        }
        \intertext{ by definition~\eqref{eq:Fenchel-Moreau_biconjugate} of the 
        biconjugate }
      & \leq 
        \sup_{\primal \in \PRIMAL} \bgp{
        \coupling\np{\primal,\dual} \LowPlus 
        \bp{-\Exchange\np{\primal,\dualbis}}
        } 
        \intertext{  by the 
      inequality~\eqref{eq:Fenchel-Moreau_biconjugate_inequality} 
      between a function and its biconjugate, 
        and by the property~\eqref{eq:lower_addition_leq} that 
        the operator~$\LowPlus$ is monotone }
      &= 
        \SFM{ \Exchange\np{\cdot,\dualbis}}{\coupling}\np{\dual} 
    \end{align*}
        %         \intertext{ 
    by the 
    definition~\eqref{eq:partial_Fenchel-Moreau_conjugate_coupling}
    of partial Fenchel-Moreau conjugate. %} %\nonumber
  \end{subequations}
\end{proof}

\subsection{Fenchel conjugates of Bellman functions}

We consider the Bellman equation in stochastic dynamic 
programming and we  provide a ``Bellman-like'' 
equation for the Fenchel conjugates of the value functions.
Related works are \cite{Rockafellar-Wolenski:2001b} and 
\cite{Pennanen-Ari-Pekka:2018}.

\subsubsection{Basic sets and couplings}

Let $\epro$ be a probability space. 
Let $ 1 \leq p<+\infty$ and $q$ be defined by \(1/p+1/q=1 \).
Adopting the notation of 
Sect.~\ref{A_duality_formula_with_multiple_Fenchel-Moreau_conjugates},
%we put $\PRIMAL=\STATE=\RR^{n_{\STATE}}$ and 
we put $\STATE=\RR^{n_{\STATE}}$ and 
$\PRIMALBIS = \espaceL{p}$ 
the space of $p$-integrable
random variables with values in~$\RR^{n_{\STATE}}$.
Elements of $\PRIMALBIS$, that is, 
$p$-integrable random variables with values in~$\STATE$,
will be denoted by bold letters like~$\va{\State}$
and elements of $\DUALBIS = \espaceL{q}$ 
by~$\va{\Dualstate}$.

The coupling~$\coupling$ between 
%$\PRIMAL=\STATE=\RR^{n_{\STATE}}$ and $\DUAL=\DUALSTATE=\RR^{n_{\STATE}}$ 
$\STATE=\RR^{n_{\STATE}}$ and $\DUALSTATE=\RR^{n_{\STATE}}$ 
is the usual scalar product \( \proscal{}{} \).
The coupling~$\couplingbis$ between 
$\PRIMALBIS = \espaceL{p}$ and
$\DUALBIS  = \espaceL{q}$ 
is naturally derived in such a way that
\begin{subequations}
\begin{align}
  \coupling\np{\state,\dualstate}&=\proscal{\state}{\dualstate} \eqsepv
\forall \np{\state,\dualstate} \in \STATE\times\DUALSTATE \eqsepv \\
  \couplingbis\np{\va{\State},\va{\Dualstate}}&=
  \EE\bc{\proscal{\va{\State}}{\va{\Dualstate}}} \eqsepv
\forall \np{\va{\State},\va{\Dualstate}} \in \PRIMALBIS\times\DUALBIS
  \eqfinp 
\end{align}
  \label{eq:coupling:Bellman}
  \end{subequations}
In that case, the conjugates \( \SFM{\fonctionprimal}{\coupling} \),
\( \SFM{\fonctionprimalbis}{\couplingbis} \), 
\( \SFM{\fonctionprimalbis}{-\couplingbis} \)  and
\(  \SFM{ \kernel }{\SumCoupling{\coupling}{(-\couplingbis)}} \)
are denoted by
\( \LFM{\fonctionprimal} \), \( \LFM{\fonctionprimalbis} \),
\( \SFM{\fonctionprimalbis}{(-\star)} \) and
\(  \SFM{ \kernel }{\SumCoupling{\star}{(-\star)}} \).
 One can find such a difference coupling in~\cite{Volle:1983}.

\subsubsection{Bellman functions and Bellman equation}

Let time~$t=0,1,\ldots,\horizon$ be discrete, with $\horizon \in \NN^*$.
Consider a stochastic optimal control problem 
with state space~$\STATE=\RR^{n_{\STATE}}$, 
control space~$\CONTROL=\RR^{n_{\CONTROL}}$,
noise process~$\sequence{ \va{\Uncertain}_{t} }{t=1,\ldots,\horizon}$
taking values in~$\UNCERTAIN=\RR^{n_{\UNCERTAIN}}$ and defined over 
the probability space~\( \epro \).

For each time~$t=0,1,\dots,\horizon-1$, we have a dynamics 
\( \dynamics_{t} : \STATE \times \CONTROL \times \UNCERTAIN \to \STATE \)
and an instantaneous cost
\( \coutint_{t} : \STATE \times \CONTROL \times \UNCERTAIN \to 
]-\infty,+\infty] \);
we also have a final cost \( \coutfin: \STATE \to ]-\infty,+\infty] \).
These two costs can take the value $+\infty$, so that we can easily handle
state and control constraints.  
\begin{assumption}
  We make the following assumptions:
  \begin{enumerate}
  \item 
    for any \( \np{\state, \control} \in \STATE \times \CONTROL \), 
the $\RR^{n_{\STATE}}$-valued 
    random variable 
    \( \dynamics_t(\state,\control,\va{\Uncertain}_{t+1}) \)
    belongs to \( \espaceL{p} \),
    \label{it:assumption:Bellman1}
  \item
    the instantaneous costs
    \( \coutint_{t} : \STATE \times \CONTROL \times \UNCERTAIN \to \bRR \),
    for $t=0,\ldots,\horizon-1$, and
    the final cost \( \coutfin: \STATE \to \bRR \) are 
nonnegative\footnote{%
We could also consider functions that are uniformly bounded below.
However, for the sake of simplicity, and without loss of generality, 
we will deal with nonnegative functions.} measurable functions.
    \label{it:assumption:Bellman2}
  \end{enumerate}
  \label{assumption:Bellman}
\end{assumption}

By item~\ref{it:assumption:Bellman2} 
in Assumption~\ref{assumption:Bellman}, 
we can define \emph{Bellman functions} by,
for all \( \state \in \STATE \), 
\begin{subequations}
  \begin{align}
    \Value_{\horizon}\np{\state} &= \coutfin\np{\state} \eqfinv \\
%    \label{eq:bellman}
    \Value_t\np{\state} &= \inf_{\va{\State},\va{\Control}}
                          \EE\bc{ \sum_{s=t}^{\horizon-1} 
                          \coutint_s\np{\va{\State}_{s},\va{\Control}_{s},\va{\Uncertain}_{s+1}} 
                          \UppPlus \coutfin\np{ \va{\State}_{\horizon} } } 
                          \eqsepv t=\horizon-1,\ldots,0 \eqfinv
  \end{align}
\label{eq:Bellman_functions}
\end{subequations}
where \( \va{\State}_{t}=\state \in \STATE \),
\( \va{\State}_{s+1}=
\dynamics_{s}\np{\va{\State}_{s},\va{\Control}_{s},\va{\Uncertain}_{s+1}} \)
and \( \sigma\np{\va{\Control}_{s}} \subset \sigma\np{ \va{\State}_{s} } \),
for $s=t,\ldots,\horizon-1$.
In addition, the Bellman functions are nonnegative.                          %                           

\begin{assumption}
  We suppose that the Bellman functions in~\eqref{eq:Bellman_functions}
are measurable and satisfy the backward \emph{Bellman equation}
  \begin{equation}
     % \Value_\horizon= \coutfin \mtext{and}
    \Value_t\np{\state} =
    \inf_{\control \in \CONTROL} 
    \EE\bc{ \coutint_t(\state,\control,\va{\Uncertain}_{t+1}) \UppPlus  
      \Value_{t+1}\bp{ \dynamics_t\np{\state,\control,\va{\Uncertain}_{t+1}}}} 
    \eqsepv t=\horizon-1,\ldots,0 \eqfinp
    \label{eq:Bellman equation}
  \end{equation}
  This is the case when the 
  noise process~$\sequence{ \va{\Uncertain}_{t} }{t=1,\ldots,\horizon}$
  is a white noise and under technical assumptions 
  \cite{Bertsekas-Shreve:1996,Carpentier-Chancelier-Cohen-DeLara:2015}.
\end{assumption}

\subsubsection{Fenchel conjugates of the Bellman functions}

% Fenchel conjugates of value functions have been studied in different contexts
% \cite{DeMeyer:1996,Cardaliaguet-Rainer:2009,Pennanen-Ari-Pekka:2018}.

Now, we  provide a ``Bellman-like'' 
equation for the Fenchel conjugates of the value functions
(see \cite{Pennanen-Ari-Pekka:2018} for related considerations).

\begin{proposition}%[Fenchel meets Bellman] 
  The Bellman functions in~\eqref{eq:Bellman_functions}
satisfy the backward equalities
  \begin{equation}
    \Value_t\np{\state} = \inf_{\va{\State}\in\PRIMALBIS} 
    \bgp{ \inf_{\control \in \CONTROL} \Bp{
        \SFM{\bp{-\Hamiltonian\np{\state,\control,\cdot} }}{(-\star)}\np{\va{\State} } 
      } \UppPlus  \EE\bc{\Value_{t+1}\np{\va{\State}}}} 
    \eqsepv \forall t=\horizon-1,\ldots,0 
%   \quad\text{and}\quad \Value_\horizon=\coutfin
    \eqfinv
    \label{eq:Bellman equation_Hamiltonian}
  \end{equation}
  where the \emph{Hamiltonian}~$\Hamiltonian$ is defined by
  \begin{multline}
    \Hamiltonian\np{\state,\control,\va{\Dualstate}} =
    \EE\bc{ \coutint_t(\state,\control,\va{\Uncertain}_{t+1}) \UppPlus 
\proscal{\dynamics_t(\state,\control,\va{\Uncertain}_{t+1})}{\va{\Dualstate}}
    } \eqsepv %\\ 
\forall \np{ \state,\control,\va{\Dualstate} } 
\in \PRIMAL\times\CONTROL\times\DUALBIS
    \eqfinp
    \label{eq:Hamiltonian_Bellman}
  \end{multline}
  Moreover, letting $\ba{\LFM{\Value_t}}_{t =0,1,\ldots,\horizon}$ 
  be the Fenchel conjugates of the Bellman functions, 
    %     for $t =0,1,\dots,\horizon$ and 
  we have, for all $\dualstate \in \DUALSTATE$,
  \begin{equation}
    \LFM{\Value_t}\np{\dualstate} \leq \inf_{\va{\Dualstate}}
    \bgp{ \sup_{\control \in \CONTROL} 
      \Bp{\SFM{\Hamiltonian\np{\cdot,\control,\va{\Dualstate}}}{\star}
        \np{\dualstate}}
      \UppPlus \EE\bc{ \LFM{\Value_{t+1}}\np{\va{\Dualstate}} } } 
    \eqsepv \forall t=\horizon-1,\ldots,0 
    \eqfinp
    \label{ineq:fmb}
  \end{equation}
  \label{propo-fmb}
\end{proposition}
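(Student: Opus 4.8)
The plan is to establish the two assertions separately: first the Hamiltonian reformulation~\eqref{eq:Bellman equation_Hamiltonian} of the Bellman equation~\eqref{eq:Bellman equation}, and then the conjugate inequality~\eqref{ineq:fmb} by a direct application of Theorem~\ref{th:main} to the couplings~\eqref{eq:coupling:Bellman}.

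For~\eqref{eq:Bellman equation_Hamiltonian}, the crux is to compute the inner partial Fenchel-Moreau conjugate. For fixed $\np{\state,\control}$, the Hamiltonian~\eqref{eq:Hamiltonian_Bellman} is, by item~\ref{it:assumption:Bellman1} in Assumption~\ref{assumption:Bellman} (which makes $\dynamics_t\np{\state,\control,\va{\Uncertain}_{t+1}}$ an element of $\PRIMALBIS=\espaceL{p}$), the affine function $\va{\Dualstate} \mapsto \EE\bc{\coutint_t\np{\state,\control,\va{\Uncertain}_{t+1}}} \LowPlus \couplingbis\bp{\dynamics_t\np{\state,\control,\va{\Uncertain}_{t+1}},\va{\Dualstate}}$ of $\va{\Dualstate}\in\DUALBIS$. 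I would therefore compute
\[
  \SFM{\bp{-\Hamiltonian\np{\state,\control,\cdot}}}{(-\star)}\np{\va{\State}}
  = \sup_{\va{\Dualstate}\in\DUALBIS} \Bp{ -\EE\bc{\proscal{\va{\State}}{\va{\Dualstate}}} \LowPlus \Hamiltonian\np{\state,\control,\va{\Dualstate}} }
  = \EE\bc{\coutint_t\np{\state,\control,\va{\Uncertain}_{t+1}}} \UppPlus \chi \eqfinv
\]
where $\chi$ takes the value $0$ when $\va{\State}=\dynamics_t\np{\state,\control,\va{\Uncertain}_{t+1}}$ almost surely and $+\infty$ otherwise, since $\sup_{\va{\Dualstate}} \EE\bc{\proscal{\dynamics_t\np{\state,\control,\va{\Uncertain}_{t+1}}-\va{\State}}{\va{\Dualstate}}}$ is $0$ or $+\infty$ according to whether $\va{\State}$ coincides with the next state. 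Substituting this into the right-hand side of~\eqref{eq:Bellman equation_Hamiltonian}, using the nonnegativity of the costs (item~\ref{it:assumption:Bellman2} in Assumption~\ref{assumption:Bellman}) to commute $\inf_{\control}$ with the fixed nonnegative term $\EE\bc{\Value_{t+1}\np{\va{\State}}}$ and to turn the Moreau additions into ordinary ones, and then performing the infimum over $\va{\State}$ against the constraint $\va{\State}=\dynamics_t\np{\state,\control,\va{\Uncertain}_{t+1}}$, collapses the expression to $\inf_{\control}\EE\bc{\coutint_t\np{\state,\control,\va{\Uncertain}_{t+1}} \UppPlus \Value_{t+1}\bp{\dynamics_t\np{\state,\control,\va{\Uncertain}_{t+1}}}}$, which is exactly~\eqref{eq:Bellman equation}.

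For~\eqref{ineq:fmb}, I would invoke Theorem~\ref{th:main} with $\PRIMAL=\STATE$, $\DUAL=\DUALSTATE$ and $\coupling=\proscal{}{}$; with $\PRIMALBIS=\espaceL{p}$, $\DUALBIS=\espaceL{q}$ and the \emph{difference} coupling $-\couplingbis$ of~\eqref{eq:coupling:Bellman} in the role of the second coupling; with $\fonctionprimal=\Value_t$; with $\fonctionprimalbis\np{\va{\State}}=\EE\bc{\Value_{t+1}\np{\va{\State}}}$; and with kernel $\kernel\np{\state,\va{\State}}=\inf_{\control\in\CONTROL}\SFM{\bp{-\Hamiltonian\np{\state,\control,\cdot}}}{(-\star)}\np{\va{\State}}$. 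The left-hand side assumption of~\eqref{eq:main} then holds, in fact as the equality~\eqref{eq:Bellman equation_Hamiltonian} just proved, so the conclusion of~\eqref{eq:main} is available, and it remains to identify the two terms on its right-hand side. Reusing the partial-conjugate computation above, the constraint $\va{\State}=\dynamics_t\np{\state,\control,\va{\Uncertain}_{t+1}}$ collapses the supremum defining $\SFM{\kernel}{\SumCoupling{\star}{(-\star)}}\np{\dualstate,\va{\Dualstate}}$ and yields $\sup_{\state,\control}\bp{\proscal{\state}{\dualstate} \LowPlus \bp{-\Hamiltonian\np{\state,\control,\va{\Dualstate}}}}=\sup_{\control\in\CONTROL}\SFM{\Hamiltonian\np{\cdot,\control,\va{\Dualstate}}}{\star}\np{\dualstate}$; while the other term, being the conjugate of $\fonctionprimalbis$ with respect to the positive coupling $\couplingbis$ of~\eqref{eq:coupling:Bellman} (the opposite of the difference coupling chosen as second coupling), equals $\SFM{\fonctionprimalbis}{\couplingbis}\np{\va{\Dualstate}}=\sup_{\va{\State}\in\PRIMALBIS}\bp{\EE\bc{\proscal{\va{\State}}{\va{\Dualstate}}} \LowPlus \bp{-\EE\bc{\Value_{t+1}\np{\va{\State}}}}}$. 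Collecting the two identifications reproduces~\eqref{ineq:fmb}.

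The main obstacle, and the only genuinely analytic step, is the last identification $\SFM{\fonctionprimalbis}{\couplingbis}\np{\va{\Dualstate}}=\EE\bc{\LFM{\Value_{t+1}}\np{\va{\Dualstate}}}$: this is the statement that the Fenchel conjugate of the integral functional $\va{\State}\mapsto\EE\bc{\Value_{t+1}\np{\va{\State}}}$ on $\espaceL{p}$ is the integral functional $\va{\Dualstate}\mapsto\EE\bc{\LFM{\Value_{t+1}}\np{\va{\Dualstate}}}$ on $\espaceL{q}$, which is precisely where the choice of the difference coupling pays off. It follows from the interchange of supremum and expectation for integral functionals, valid here because $\Value_{t+1}$ is a nonnegative measurable (hence normal) integrand by Assumption~\ref{assumption:Bellman} and because $\espaceL{p}$ is decomposable with $1/p+1/q=1$. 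A subsidiary point requiring attention throughout is that all additions are the Moreau lower and upper additions $\LowPlus$, $\UppPlus$; the nonnegativity of the instantaneous and final costs is what rules out any $\np{+\infty}\UppPlus\np{-\infty}$ indeterminacy and legitimizes the commutations of $\inf$, $\sup$ and $\EE$ used above.
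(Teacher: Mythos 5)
Your proof is correct in substance and follows essentially the same route as the paper: dualize the dynamics constraint $\va{\State}=\dynamics_t\np{\state,\control,\va{\Uncertain}_{t+1}}$ to obtain the Hamiltonian form~\eqref{eq:Bellman equation_Hamiltonian}, feed that identity into Theorem~\ref{th:main} with the couplings~\eqref{eq:coupling:Bellman} (the difference coupling playing the role of $\couplingbis$), and finish by comparing $\sup$ and $\EE$. Two local choices differ harmlessly from the paper's: you verify~\eqref{eq:Bellman equation_Hamiltonian} by computing the partial conjugate explicitly as ``expected cost plus indicator of the constraint'' and substituting back, where the paper runs the same computation forward from~\eqref{eq:Bellman equation}; and you evaluate $\SFM{\kernel}{\SumCoupling{\star}{(-\star)}}$ directly from that explicit form (getting an equality), where the paper goes through Lemma~\ref{lem:partial_Fenchel-Moreau_conjugate_coupling_kernel} together with~\eqref{eq:Fenchel-Moreau_conjugate_infimum} and only obtains the inequality, which suffices.

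The one step you should repair is the last one. You assert the \emph{equality} $\SFM{\fonctionprimalbis}{\couplingbis}\np{\va{\Dualstate}}=\EE\bc{\LFM{\Value_{t+1}}\np{\va{\Dualstate}}}$ and justify it by ``$\Value_{t+1}$ is nonnegative measurable, hence a normal integrand.'' That implication is false (normality requires more than measurability, e.g.\ lower semicontinuity in the state variable), and the interchange theorem you invoke is precisely the hard direction $\sup_{\va{\State}}\EE\bc{\cdots}\geq\EE\bc{\sup_{\state}\cdots}$, which rests on a measurable selection argument not available under Assumption~\ref{assumption:Bellman} alone. Fortunately, \eqref{ineq:fmb} only needs the opposite, elementary direction $\sup_{\va{\State}\in\PRIMALBIS}\EE\bc{\proscal{\va{\State}}{\va{\Dualstate}}\LowPlus\bp{-\Value_{t+1}\np{\va{\State}}}}\leq\EE\bc{\LFM{\Value_{t+1}}\np{\va{\Dualstate}}}$, which is what the paper uses; replacing your claimed equality by this inequality closes the proof with no extra hypotheses.
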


\begin{proof}
In what follows, we will manipulate mathematical expectations of 
random variables that are either nonnegative 
(by item~\ref{it:assumption:Bellman2} in Assumption~\ref{assumption:Bellman}),
or nonpositive (by taking the opposite),
or integrable (by item~\ref{it:assumption:Bellman1} 
in Assumption~\ref{assumption:Bellman}, giving random variables
resulting from a scalar product between an element
of $\espaceL{p}$ and one of $\espaceL{q}$).
We will be careful to remain in the conditions where the usual rules
of algebra apply \cite{Loeve:1977}.

  By the Bellman equation~\eqref{eq:Bellman equation}, we have that 
  \begin{align*}
    \Value_t\np{\state} 
    &=
      \inf_{\va{\State}\in\PRIMALBIS, \control \in \CONTROL} 
      \EE\bc{ \coutint_t(\state,\control,\va{\Uncertain}_{t+1}) \UppPlus  \Value_{t+1}\np{\va{\State}}} \nonumber 
    \\
    & \phantom{\va{\State}\in\PRIMALBIS} \text{s.t.} \quad 
      \va{\State} = \dynamics_t(\state,\control,\va{\Uncertain}_{t+1})
      \nonumber 
    \\
    &= 
      \inf_{\va{\State}\in\PRIMALBIS, \control \in \CONTROL} \sup_{\va{\Dualstate}\in\DUALBIS} 
      \Bp{
      \EE\bc{ \coutint_t(\state,\control,\va{\Uncertain}_{t+1}) 
      \UppPlus  \Value_{t+1}\np{\va{\State}} \UppPlus 
      \proscal{ \dynamics_t(\state,\control,\va{\Uncertain}_{t+1}) 
      - \va{\State} }{\va{\Dualstate}}}
      }
      \intertext{ by using item~\ref{it:assumption:Bellman1} 
      in Assumption~\ref{assumption:Bellman} }       
      \nonumber 
    &= 
      \inf_{\va{\State}\in\PRIMALBIS, \control \in \CONTROL} \bgp{
      \sup_{\va{\Dualstate}\in\DUALBIS} \Bp{ 
      \EE\bc{ \coutint_t(\state,\control,\va{\Uncertain}_{t+1}) 
      \UppPlus  
      \proscal{ \dynamics_t(\state,\control,\va{\Uncertain}_{t+1}) 
      - \va{\State} }{\va{\Dualstate}}} } 
      \UppPlus \EE\bc{ \Value_{t+1}\np{\va{\State}} }  }
      \intertext{ by \eqref{eq:upper_addition_sup_constant} with 
      \(0 \leq \EE\bc{ \Value_{t+1}\np{\va{\State}} } \)
      since the Bellman functions are nonnegative, 
      and as $\EE$ and $\UppPlus$ commute because, 
      by Assumption~\ref{assumption:Bellman}, all terms inside the
      expectation~$\EE$ are either nonnegative or integrable }        
      \nonumber 
    &= 
      \inf_{\va{\State}\in\PRIMALBIS, \control \in \CONTROL} 
      \Bgp{  
      \sup_{\va{\Dualstate}\in\DUALBIS}     \Bp{
      \EE\Bc{\proscal{- \va{\State}}{ \va{\Dualstate}}}  \UppPlus 
      \Hamiltonian\np{\state,\control,\va{\Dualstate}} } \UppPlus 
      \EE\bc{\Value_{t+1}\np{\va{\State}} } } 
      \nonumber 
      \intertext{ by definition~\eqref{eq:Hamiltonian_Bellman} 
      of the Hamiltonian }
    &=
      \inf_{\va{\State}\in\PRIMALBIS, \control \in \CONTROL} 
      \Bgp{  
      \sup_{\va{\Dualstate}\in\DUALBIS}  \Bp{
      \EE\Bc{\proscal{- \va{\State}}{ \va{\Dualstate}}}  \LowPlus 
      \Hamiltonian\np{\state,\control,\va{\Dualstate} } } \UppPlus 
      \EE\bc{\Value_{t+1}\np{\va{\State}} } }
      \nonumber 
      \intertext{ as Moreau upper and lower additions coincide 
above because \( -\infty < 
      \EE\Bc{\proscal{- \va{\State}}{ \va{\Dualstate}}} < +\infty \)
      by definition of the spaces $\PRIMALBIS$ and $\DUALBIS$ 
      and of the coupling~\eqref{eq:coupling:Bellman} between them}
    &= 
      \inf_{\va{\State}\in\PRIMALBIS, \control \in \CONTROL}
      \Bgp{  
      \sup_{\va{\Dualstate}\in\DUALBIS}   \bgp{ 
      \EE\Bc{\proscal{- \va{\State}}{ \va{\Dualstate}}}  \LowPlus 
      \Bp{- \bp{- \Hamiltonian\np{\state,\control,\va{\Dualstate} } } } } 
    \UppPlus 
      \EE\bc{\Value_{t+1}\np{\va{\State}} } }
      \nonumber 
    \\
    &= 
      \inf_{\va{\State}\in\PRIMALBIS, \control \in \CONTROL} 
      \Bp{
      \SFM{\bp{-\Hamiltonian\np{\state,\control,\cdot} }}{(-\star)}
      \np{\va{\State} } 
      \UppPlus 
      \EE\bc{\Value_{t+1}\np{\va{\State}}}
      }
      \nonumber
      \intertext{ by definition of the Fenchel conjugate of
      \( \va{\Dualstate} \mapsto -\Hamiltonian\np{\state,\control,\cdot} \) 
      with respect to the opposite coupling $\;^{(-\star)}$ defined by 
       \( \np{\va{\State},\va{\Dualstate}} \mapsto 
       \EE\bc{ \proscal{- \va{\State}}{ \va{\Dualstate} } } \),
      so that we have proven~\eqref{eq:Bellman equation_Hamiltonian} }
    &= 
      \inf_{\va{\State}\in\PRIMALBIS} 
      \bgp{ \inf_{\control \in \CONTROL} \Bp{
      \SFM{\bp{-\Hamiltonian\np{\state,\control,\cdot} }}{(-\star)}
      \np{\va{\State} } 
      } \UppPlus  \EE\bc{\Value_{t+1}\np{\va{\State}}}} 
      \intertext{ by the property~\eqref{eq:upper_addition_inf} that the 
operator~$\inf$ is ``distributive'' with respect to~$\UppPlus$ }
    &= 
      \inf_{\va{\State}\in\PRIMALBIS} 
      \bgp{ \inf_{\control \in \CONTROL} 
      \kernel_{\control}\np{ \state,\va{\State} } \UppPlus 
      \EE\bc{\Value_{t+1}\np{\va{\State}}}} 
      \eqfinv
      \nonumber
  \end{align*}
  where we have defined
  \begin{equation}
    \kernel_{\control}\np{ \state,\va{\State} } 
    = \SFM{ \bp{- \Hamiltonian\np{\state,\control,\cdot} }}{(-\star)}
    \np{\va{\State} } 
    \eqsepv \forall \control \in \CONTROL 
    \eqsepv \forall \np{ \state,\va{\State} } \in \PRIMAL\times\PRIMALBIS
    \eqfinp
  \end{equation}
  By~\eqref{eq:partial_Fenchel-Moreau_conjugate_coupling_kernel},
  we obtain that 
  \begin{equation}
    \SFM{ \kernel_{\control} }{ \SumCoupling{\star}{(-\star)} }
\np{\dualstate,\va{\Dualstate}}
    \leq \SFM{ \Hamiltonian\np{\cdot,\control,\va{\Dualstate}} }{\star}
    \np{\dualstate}
    \eqsepv \forall \control \in \CONTROL 
    \eqsepv \forall \np{ \dualstate,\va{\Dualstate} } \in \DUAL\times\DUALBIS
    \eqfinp
    \label{eq:partial_Fenchel-Moreau_conjugate_coupling_kernel_control}
  \end{equation}
  Therefore, as we have just established that
  \( \Value_t\np{\state} =
  \inf_{\va{\State}\in\PRIMALBIS} 
  \bgp{ \inf_{\control \in \CONTROL} 
    \kernel_{\control}\np{ \state,\va{\State} } 
    \UppPlus \EE\bc{\Value_{t+1}\np{\va{\State}}}} \),
  we deduce from implication~\eqref{eq:main} that 
  \begin{align*}
    \LFM{\Value_t}\np{\dualstate} 
    & \leq
      \inf_{\va{\Dualstate}}
      \bgp{  
      \SFM{ \bp{ \inf_{\control \in \CONTROL} \kernel_{\control} } }{\SumCoupling{\star}{(-\star)}}
      \np{ \dualstate,\va{\Dualstate} }
      \UppPlus \EE\bc{\Value_{t+1}\np{\cdot}}^{\star}(\va{\Dualstate})
      }\\
    & =
      \inf_{\va{\Dualstate}}
      \bgp{  \sup_{\control \in \CONTROL} \Bp{ 
      \SFM{ \kernel_{\control} }{\SumCoupling{\star}{(-\star)}}
      \np{ \dualstate,\va{\Dualstate} } }
      \UppPlus \EE\bc{\Value_{t+1}\np{\cdot}}^{\star}(\va{\Dualstate})
      }
      \intertext{ since 
      \( \SFM{ \bp{ \inf_{\control \in \CONTROL} \kernel_{\control} } }{\SumCoupling{\star}{(-\star)}}
      = \sup_{\control \in \CONTROL} 
      \SFM{ \kernel_{\control} }{\SumCoupling{\star}{(-\star)}} \)
      by the formula~\eqref{eq:Fenchel-Moreau_conjugate_infimum} }
    & \leq 
      \inf_{\va{\Dualstate}}
      \Bp{ \sup_{\control \in \CONTROL} \Bp{ 
      \SFM{\Hamiltonian\np{\cdot,\control,\va{\Dualstate}}}{\star}
      \np{\dualstate} }
      \UppPlus \EE\bc{\Value_{t+1}\np{\cdot}}^{\star}(\va{\Dualstate})
      }
      \intertext{ as
      \( \SFM{ \kernel_{\control}}{\SumCoupling{\star}{(-\star)}}
      \np{\dualstate,\va{\Dualstate}} 
      \leq \SFM{ \Hamiltonian\np{\cdot,\control,\va{\Dualstate} } }{\star} 
      \np{\dualstate} \) 
      by~\eqref{eq:partial_Fenchel-Moreau_conjugate_coupling_kernel_control}}
    & \leq
      \inf_{\va{\Dualstate}}
      \Bp{ \sup_{\control \in \CONTROL} \Bp{ 
      \SFM{\Hamiltonian\np{\cdot,\control,\va{\Dualstate}}}{\star}
      \np{\dualstate} }
      \UppPlus \EE\bc{ \LFM{\Value_{t+1}}\np{\va{\Dualstate}} } } \eqfinv
  \end{align*}
as soon as we prove that 
\( \EE\bc{\Value_{t+1}\np{\cdot}}^{\star}(\va{\Dualstate})
\leq \EE\bc{ \LFM{\Value_{t+1}}\np{\va{\Dualstate}} } \).

Indeed, we have that
  \begin{align*}
    \EE\bc{\Value_{t+1}\np{\cdot}}^{\star}(\va{\Dualstate}) 
    &=
      \sup_{\va{\State}\in\PRIMALBIS} \Bp{ 
      \EE\Bc{ \proscal{\va{\State}}{ \va{\Dualstate} } }  \LowPlus \bp{- 
      \EE\bc{ \Value_{t+1}\np{\va{\State}} } } } \\
    & =
      \sup_{\va{\State}\in\PRIMALBIS} \Bp{ 
      \EE\Bc{ \proscal{\va{\State}}{ \va{\Dualstate} } \LowPlus \bp{- 
      \Value_{t+1}\np{\va{\State}} } } } 
      \intertext{ because \( -\infty < 
      \EE\Bc{\proscal{- \va{\State}}{ \va{\Dualstate}}} < +\infty \)
      by definition of the spaces $\PRIMALBIS$ and $\DUALBIS$ 
      and of the coupling~\eqref{eq:coupling:Bellman} between them}
    & \leq 
      \EE\Bc{ \sup_{\va{\State}\in\PRIMALBIS} \Bp{ 
      \proscal{\va{\State}}{ \va{\Dualstate} } \LowPlus \bp{- 
      \Value_{t+1}\np{\va{\State}} } } } =  
      \EE\bc{ \LFM{\Value_{t+1}}\np{\va{\Dualstate}} } 
      \eqfinv
  \end{align*}
by definition of \( \LFM{\Value_{t+1}}\np{\va{\Dualstate}} \).
  This ends the proof.
\end{proof}

Proposition~\ref{propo-fmb} may be useful to obtain upper and lower
estimates in approximations of Bellman functions.
We just provide a sketch of the argument.
\begin{enumerate}
\item 
Suppose that the Bellman functions 
 $\sequence{\Value_t}{t =0,1,\ldots,\horizon}$ 
% in~\eqref{eq:bellman} 
satisfy the Bellman equation~\eqref{eq:Bellman equation}
and are convex l.s.c.. 
This is the case in 
\emph{Stochastic Dual Dynamic Programming} (SDDP), when 
the dynamics~\( \dynamics_{t} \) are jointly linear in state and control, 
the instantaneous costs~\( \coutint_{t} \) are 
jointly convex in state and control,
the final cost~\( \coutfin \) is convex, 
together with technical assumptions
(see details in \cite{MR2746854,girardeau2014convergence} 
and references therein).
\item 
The Fenchel conjugates $\sequence{\LFM{\Value_t}}{t =0,1,\ldots,\horizon}$ 
of the Bellman functions are convex l.s.c., by construction.
Suppose that they satisfy a ``Bellman like'' \emph{equation}
  \begin{equation}
    \LFM{\Value_t}\np{\dualstate} = \inf_{\va{\Dualstate}}
    \bgp{ \sup_{\control \in \CONTROL} 
      \Bp{\SFM{\Hamiltonian\np{\cdot,\control,\va{\Dualstate}}}{\star}
        \np{\dualstate}}
      \UppPlus \EE\bc{ \LFM{\Value_{t+1}}\np{\va{\Dualstate}} } } 
    \eqsepv \forall t=\horizon-1,\ldots,0 \eqfinv 
    \label{eq:fmb}
  \end{equation}
which is~\eqref{ineq:fmb}, where the inequality is an equality.
For this, one needs assumptions of the kind described 
in~\S\ref{The_duality_equality_case}, as well as the equality 
 $ \EE\bc{\Value_{t+1}\np{\cdot}}^{\star}(\va{\Dualstate})
    = \EE\bc{ \LFM{\Value_{t+1}}\np{\va{\Dualstate}} }$ 
(see~\cite{rockafellar1968integrals,Rockafellar1971integrals}).
\item 
With the Bellman operators deduced from the 
Bellman equation~\eqref{eq:Bellman equation}
and ``Bellman like'' equation~\eqref{ineq:fmb},
one can easily produce piecewise linear lower bound functions
\( \underline{\Value}_{t, (k)} \leq \underline{\Value}_{t, (k+1)}
 \leq {\Value}_{t} \) and
\( \underline{\tilde\Value}_{t, (k)} \leq 
\underline{\tilde\Value}_{t, (k+1)} \leq \LFM{{\Value}_{t}} \),
for \(k\in\NN\), 
by a proper algorithm (like the SDDP algorithm).
\item 
Since the Bellman functions 
 $\sequence{\Value_t}{t =0,1,\ldots,\horizon}$ 
are convex l.s.c. and proper\footnote{%
They are nonnegative and we exclude the degenerate case where they would have an empty domain.}, we deduce that
\begin{equation}
 \underline{\Value}_{t, (k)} \leq \underline{\Value}_{t, (k+1)}
 \leq {\Value}_{t} = \LFMbi{{\Value}_{t}} \leq 
\LFM{ \underline{\tilde\Value}_{t, (k+1)} } \leq
\LFM{ \underline{\tilde\Value}_{t, (k)} } \eqfinp
\end{equation}
Thus, we can control the evolution of the algorithm.
\end{enumerate}

\section{Conclusion}

Working on Fenchel conjugates of Bellman functions,
we obtained Inequality~\eqref{ineq:fmb}.
This was our impetus to investigate 
inequalities with more than one coupling.
The sum coupling \( \SumCoupling{\coupling}{\couplingbis} \) 
in~\eqref{eq:product_coupling} leads to other nice formulas like
the following one (with obvious notations)
\begin{equation}
  \fonctiontrois\np{\primal,\primalbis} =
\inf_{\control \in \CONTROL} \Bp{ {\cal L} \np{\primal,\control}
\UppPlus {\cal M} \np{\control,\primalbis} } 
\Rightarrow 
\SFM{\fonctiontrois}{\SumCoupling{\coupling}{\couplingbis}}\np{\dual,\dualbis} 
= \sup_{\control \in \CONTROL} \Bp{ 
\SFM{{\cal L}}{\coupling}\np{\dual,\control} 
\LowPlus
\SFM{{\cal M}}{\couplingbis}\np{\control,\dualbis} }
\eqfinv
\end{equation}
where the conjugates \( \SFM{{\cal L}}{\coupling} \) and
\( \SFM{{\cal M}}{\couplingbis} \) are partial.
\bigskip

\textbf{Acknowledgements.}
We want to thank 
%Antonin Chambolle,
Juan Enrique Martínez Legaz,
Roger Wets, 
Lionel Thibault,
Terry Rockafellar,
Guillaume Carlier,
Filippo Santambrogio,
Jean-Paul Penot,
Patrick L. Combettes,
Marianne Akian, 
Stéphane Gaubert,
Michel Théra,
Michel Volle,
for their comments on first versions of this work. 

\appendix 

\section{Appendix}
\label{Appendix}

\subsection{Background on J.~J. Moreau lower and upper additions}
\label{Moreau_lower_and_upper_additions}

% \subsection{Background on J.~J. Moreau lower and upper additions}
% \label{Moreau_lower_and_upper_additions}

% A function $f: \Primal \to \bar\RR$, where 
% \( \bar\RR = [-\infty,+\infty] \) is said to be an
% \emph{extended function}, that we will shorten into function.

% The \emph{domain}~$\dom f$ of the extended function 
% $f: \Primal \to ]-\infty,+\infty] $ is
% \begin{equation}
% \dom f = \{ \primal \in \Primal \mid f(\primal) < +\infty \} \eqfinp 
% \end{equation}
% The extended function $f: \Primal \to ]-\infty,+\infty] $ 
% is said to be \emph{proper} if $\dom f \not = \emptyset$. 
% Thus, a \emph{proper function} cannot take the value $-\infty$,
% and must take at least one finite value. 

When we manipulate functions with values 
in~$\bar\RR = [-\infty,+\infty] $,
we adopt the following Moreau \emph{lower addition} or
\emph{upper addition}, depending on whether we deal with $\sup$ or $\inf$
operations. 
We follow \cite{Moreau:1970}.
In the sequel, $u$, $v$ and $w$ are any elements of~$\bar\RR$.

\subsubsection*{Moreau lower addition}

\begin{subequations}
  The Moreau \emph{lower addition} extends the usual addition with 
  \begin{equation}
    % \np{+\infty} - \np{+\infty} = \np{-\infty} - \np{-\infty} = -\infty \eqfinp
    \np{+\infty} \LowPlus \np{-\infty} = \np{-\infty} \LowPlus \np{+\infty} = -\infty \eqfinp
    \label{eq:lower_addition}
  \end{equation}
  % From the lower addition, we naturally deduce the \emph{lower substraction}:
  % \begin{equation}
  %   u \minusdot v = u \LowPlus \np{-v}  \eqfinp
  %   \label{eq:lower_substraction}
  % \end{equation}
  With the \emph{lower addition}, \( \np{\bar\RR, \LowPlus } \) is a convex cone,
  with $ \LowPlus $ commutative and associative.
  The lower addition displays the following properties:
  \begin{align}
    u \leq u' \eqsepv v \leq v' 
    & 
      \Rightarrow u \LowPlus v \leq u'  \LowPlus v' \eqfinv \label{eq:lower_addition_leq}\\
    (-u) \LowPlus (-v) 
    & 
      \leq -(u \LowPlus v) \eqfinv \label{eq:lower_addition_minus}\\ 
    (-u) \LowPlus u 
    & 
      \leq 0 \eqfinv \label{eq:lower_substraction_le_zero}\\
    \sup_{a\in\AAA} f(a) \LowPlus \sup_{b\in\BB} g(b)
    &=
      \sup_{a\in\AAA,b\in\BB} \bp{f(a) \LowPlus g(b)} \eqfinv\label{eq:lower_addition_sup}\\ 
    \inf_{a\in\AAA} f(a) \LowPlus \inf_{b\in\BB} g(b) 
    & \leq 
      \inf_{a\in\AAA,b\in\BB} \bp{f(a) \LowPlus g(b)} \eqfinv\label{eq:lower_addition_inf}\\ 
    t < +\infty \Rightarrow    \inf_{a\in\AAA} f(a) \LowPlus t 
    & =
      \inf_{a\in\AAA} \bp{f(a) \LowPlus t} \eqfinp\label{eq:lower_addition_inf_constant}
  \end{align}
\end{subequations}

\subsubsection*{Moreau upper addition}

\begin{subequations}
  The Moreau \emph{upper addition} extends the usual addition with 
  \begin{equation}
    \np{+\infty} \UppPlus \np{-\infty} = 
    \np{-\infty} \UppPlus \np{+\infty} = +\infty \eqfinp
    \label{eq:upper_addition}
  \end{equation}
  % From the upper addition, we naturally deduce the \emph{upper substraction}:
  % \begin{equation}
  %   u \dotminus v = u \UppPlus \np{-v}  \eqfinp
  %   \label{eq:upper_substraction}
  % \end{equation}
  With the \emph{upper addition}, \( \np{\bar\RR, \UppPlus } \) is a convex cone,
  with $ \UppPlus $ commutative and associative.
  The upper addition displays the following properties:
  \begin{align}
    u \leq u' \eqsepv v \leq v' 
    & 
      \Rightarrow u \UppPlus v \leq u' \UppPlus v' \eqfinv
      \label{eq:upper_addition_leq} \\
    (-u) \UppPlus (-v) 
    & 
      \geq -(u \UppPlus v) \eqfinv
      \label{eq:upper_addition_minus} 
    \\ 
    (-u) \UppPlus u 
    & 
      \geq 0 \eqfinv
      \label{eq:upper_substraction_ge_zero} 
    \\ 
    \inf_{a\in\AAA} f(a) \UppPlus \inf_{b\in\BB} g(b) 
    & =
      \inf_{a\in\AAA,b\in\BB} \bp{f(a) \UppPlus g(b)} \eqfinv
      \label{eq:upper_addition_inf} 
    \\
    \sup_{a\in\AAA} f(a) \UppPlus \sup_{b\in\BB} g(b)
    & \geq 
      \sup_{a\in\AAA,b\in\BB} \bp{f(a) \UppPlus g(b)} \eqfinv
      \label{eq:upper_addition_sup}
    \\ 
    -\infty < t \Rightarrow    \sup_{a\in\AAA} f(a) \UppPlus t 
    & =
      \sup_{a\in\AAA} \bp{f(a) \UppPlus t} \eqfinp
      \label{eq:upper_addition_sup_constant}
      % \\  u \UppPlus (v \LowPlus w) & \geq  (u \UppPlus v) \LowPlus w  
  \end{align}
\end{subequations}

\subsubsection*{Joint properties of the Moreau lower and upper addition}

\begin{subequations}
We obviously have that
\begin{equation}
  u \LowPlus v \leq u \UppPlus v \eqfinp 
\label{eq:lower_leq_upper_addition}
\end{equation}
The Moreau lower and upper additions are related by
\begin{equation}
-(u \UppPlus v) = (-u) \LowPlus (-v) \eqsepv 
-(u \LowPlus v) = (-u) \UppPlus (-v) \eqfinp 
\label{eq:lower_upper_addition_minus}  
\end{equation}
They satisfy the inequality
\begin{equation}
(u \UppPlus v) \LowPlus w \leq u \UppPlus (v \LowPlus w) \eqfinp
                                 \label{eq:lower_upper_addition_inequality} 
\end{equation}
with 
  \begin{equation}
(u \UppPlus v) \LowPlus w < u \UppPlus (v \LowPlus w) 
\iff
\begin{cases}
 u=+\infty \mtext{ and } w=-\infty \eqsepv \\
\mtext{ or } \\
u=-\infty \mtext{ and } w=+\infty 
\mtext{ and } -\infty < v < +\infty \eqfinp
\end{cases}
                                 \label{eq:lower_upper_addition_equality} 
\end{equation}
Finally, we have that 
\begin{align}
& u \LowPlus (-v) \leq 0 \iff u \leq v  \iff  0 \leq v \UppPlus (-u) 
\eqfinv
\label{eq:lower_upper_addition_comparisons}
\\
& u \LowPlus (-v) \leq w \iff u \leq v \UppPlus w \iff u \LowPlus (-w) \leq v
\eqfinv 
\\  
& w \leq v \UppPlus (-u) \iff u \LowPlus w \leq v \iff u \leq v \UppPlus (-w) 
\eqfinp
\end{align}
 \end{subequations}

% \begin{lemma} $a \LowPlus ( - b) \le 0 \iff a \le b  \iff  (-a) \UppPlus b \ge 0$
% \label{lem:comparaison}
% \end{lemma}

% \begin{proof}
%   We prove the first equivalence and leave the second one to the reader. 
%   Assume that $a \LowPlus ( - b) \le 0$.
%   Now, to obtain $a \le b$, we consider three cases.
%   \begin{enumerate}
%   \item 
%     If \( -\infty < b < +\infty \), then from 
%     \( a \LowPlus (-b)\le 0 \), we deduce that $a$ cannot take the value $+ \infty$. If $a$ is finite we have 
%     $a \le b$ by standard algebra and if $a$ is equal to $-\infty$ we indeed also have $a = -\infty \le b$.
%   \item 
%     If \( b =-\infty \), then from 
%     \( a \LowPlus \bp{ - b } \leq 0 \), we deduce that 
%     \( a \LowPlus (+\infty) \leq 0 \), hence that $a = -\infty$. Therefore, 
%     \( a =-\infty \leq b =-\infty \).
%   \item 
%     If \( b = +\infty \), then we have \( a \leq +\infty = b \).
%   \end{enumerate}
%   Assume that $a \le b$, we then have by Equation~\eqref{eq:lower_addition_leq} that 
%   $a \LowPlus (-b) \le b \LowPlus (-b)$. Which gives $a \LowPlus (-b) \le b \LowPlus (-b) \le 0$ 
%   using Equation~\eqref{eq:lower_substraction_le_zero}. 
% \end{proof}

\subsection{Background on Fenchel-Moreau conjugacy with respect to a coupling}

Let be given two sets $\PRIMAL$ and $\DUAL$.
Consider a \emph{coupling} function 
\( \coupling : \PRIMAL \times \DUAL \to \barRR \).
% \( \coupling : \PRIMAL \times \DUAL \to  ]-\infty,+\infty[ \).
We also use the notation \( \PRIMAL 
\overset{\coupling}{\leftrightarrow} \DUAL \) for a coupling, so that
\begin{equation}
  \PRIMAL \overset{\coupling}{\leftrightarrow} \DUAL 
  \iff
  \coupling : \PRIMAL \times \DUAL \to \barRR \eqfinp
  \label{eq:coupling}
\end{equation}

\begin{definition}
  The \emph{Fenchel-Moreau conjugate} of a 
  function \( \fonctionprimal : \PRIMAL  \to \barRR \), 
  with respect to the coupling~$\coupling$ in~\eqref{eq:coupling}, is
  the function \( \SFM{\fonctionprimal}{\coupling} : \DUAL  \to \barRR \) 
  defined by
  \begin{equation}
    \SFM{\fonctionprimal}{\coupling}\np{\dual} = 
    \sup_{\primal \in \PRIMAL} \Bp{ \coupling\np{\primal,\dual} 
      \LowPlus \bp{ -\fonctionprimal\np{\primal} } } 
    \eqsepv \forall \dual \in \DUAL
    \eqfinp
    \label{eq:upper-Fenchel-Moreau_conjugate}
  \end{equation}
  We associate with the coupling $\coupling$
the coupling $\coupling': \DUAL \times \PRIMAL \to \barRR$ defined by 
  $\coupling'\np{\dual,\primal}= \coupling\np{\primal,\dual}$.
  The \emph{Fenchel-Moreau biconjugate} is
  the function \( \SFMbi{\fonctionprimal}{\coupling} : \PRIMAL  \to \barRR \) defined by
  \begin{equation}
    \SFMbi{\fonctionprimal}{\coupling}\np{\primal} 
    = \np{\fonctionprimal^{\coupling}}^{\coupling'}\np{\primal} 
    = \sup_{\dual \in \DUAL} 
    \Bp{ \coupling\np{\primal,\dual} 
      \LowPlus \bp{ - \SFM{\fonctionprimal}{\coupling}\np{\dual} } } 
    \eqsepv \forall \primal \in \PRIMAL 
    \eqfinp
    \label{eq:Fenchel-Moreau_biconjugate}
  \end{equation}
\end{definition}

The following property is well known. 
\begin{proposition}
  For any function \( \fonctionprimal : \PRIMAL  \to \barRR \), we have that 
  \begin{equation}
    \SFMbi{\fonctionprimal}{\coupling}\np{\primal} \leq \fonctionprimal\np{\primal}  \eqfinp  
    \label{eq:Fenchel-Moreau_biconjugate_inequality} 
  \end{equation}
  % \label{pr:condition_Fenchel-Moreau_biconjugate}
\end{proposition}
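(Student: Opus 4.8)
The plan is to establish the generalized Fenchel inequality first, namely that
\( \coupling\np{\primal,\dual} \LowPlus \bp{ - \SFM{\fonctionprimal}{\coupling}\np{\dual} } \leq \fonctionprimal\np{\primal} \)
for every pair \( \np{\primal,\dual} \in \PRIMAL \times \DUAL \), and then to conclude by taking the supremum over~$\dual$. Fix \( \primal \in \PRIMAL \). For any \( \dual \in \DUAL \), the definition~\eqref{eq:upper-Fenchel-Moreau_conjugate} of the conjugate as a supremum over the primal variable immediately yields the lower bound
\begin{equation*}
  \SFM{\fonctionprimal}{\coupling}\np{\dual}
  = \sup_{\primal' \in \PRIMAL}
  \Bp{ \coupling\np{\primal',\dual} \LowPlus \bp{ -\fonctionprimal\np{\primal'} } }
  \geq \coupling\np{\primal,\dual} \LowPlus \bp{ -\fonctionprimal\np{\primal} }
  \eqfinv
\end{equation*}
obtained simply by restricting the supremum to the single term \( \primal'=\primal \).

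The key step is then to rewrite this bound. Setting \( u = \coupling\np{\primal,\dual} \), \( v = \fonctionprimal\np{\primal} \) and \( w = \SFM{\fonctionprimal}{\coupling}\np{\dual} \), the inequality above reads \( u \LowPlus \np{-v} \leq w \). I would invoke the comparison property~\eqref{eq:lower_upper_addition_comparisons} --- precisely the equivalence \( u \LowPlus \np{-v} \leq w \iff u \LowPlus \np{-w} \leq v \) --- to transfer the conjugate to the other side, giving
\begin{equation*}
  \coupling\np{\primal,\dual} \LowPlus \bp{ - \SFM{\fonctionprimal}{\coupling}\np{\dual} }
  \leq \fonctionprimal\np{\primal}
  \eqfinp
\end{equation*}
Since this holds for every \( \dual \in \DUAL \) while the right-hand side does not depend on~$\dual$, passing to the supremum over~$\dual$ and recognizing the left-hand side as the biconjugate via definition~\eqref{eq:Fenchel-Moreau_biconjugate} delivers
\( \SFMbi{\fonctionprimal}{\coupling}\np{\primal} \leq \fonctionprimal\np{\primal} \), as desired.

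The only delicate point is the extended-real arithmetic: because the lower and upper Moreau additions disagree on the indeterminate forms \( \np{+\infty}\LowPlus\np{-\infty} \), one cannot naively "move a term across the inequality" as in ordinary algebra. The equivalence one needs is exactly the content of~\eqref{eq:lower_upper_addition_comparisons}, which is tailored to handle these cases uniformly, so once that property is in hand the argument involves no case distinction on infinite values. I therefore expect no genuine obstacle here --- the result is classical --- and the proof is essentially a two-line application of the definition together with the comparison rule from the appendix.
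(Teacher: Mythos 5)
Your proof is correct, but it is organized differently from the paper's. The paper computes the quantity \( \SFMbi{\fonctionprimal}{\coupling}\np{\primal} \LowPlus \bp{-\fonctionprimal\np{\primal}} \) globally: it writes the biconjugate as a supremum over~$\dual$, distributes \( \bp{-\fonctionprimal\np{\primal}} \) inside via~\eqref{eq:lower_addition_sup}, bounds the inner expression by \( \SFM{\fonctionprimal}{\coupling}\np{\dual} \LowPlus \bp{-\SFM{\fonctionprimal}{\coupling}\np{\dual}} \leq 0 \) using~\eqref{eq:lower_substraction_le_zero}, and only then invokes the comparison rule with \( w=0 \), i.e.\ the labelled line of~\eqref{eq:lower_upper_addition_comparisons}. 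You instead prove the pointwise Fenchel--Young inequality \( \coupling\np{\primal,\dual} \LowPlus \bp{-\fonctionprimal\np{\primal}} \leq \SFM{\fonctionprimal}{\coupling}\np{\dual} \), transfer the conjugate across the inequality for each fixed~$\dual$, and take the supremum at the end. Your route is slightly more economical --- it avoids the distributivity, associativity and commutativity manipulations of \( \LowPlus \) under the supremum --- at the cost of relying on the stronger three-variable transfer rule \( u \LowPlus \np{-v} \leq w \iff u \LowPlus \np{-w} \leq v \). That rule is indeed stated in the paper, but note it is the \emph{second}, unlabelled line of the ``Finally, we have that'' block in the appendix, not the line actually carrying the label~\eqref{eq:lower_upper_addition_comparisons} (which is the special case \( w=0 \)); your citation should point to that second line. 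With that reference fixed, the argument is complete and handles the indeterminate forms correctly, since the transfer rule is valid on all of \( \barRR \).
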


\begin{proof}
  We prove~\eqref{eq:Fenchel-Moreau_biconjugate_inequality} as follows.
  \begin{subequations}
    \begin{align}
      \SFMbi{\fonctionprimal}{\coupling}\np{\primal} 
      \LowPlus \bp{ -\fonctionprimal\np{\primal} }
      &= \sup_{\dual \in \DUAL} 
        \Bp{ \coupling\np{\primal,\dual} \LowPlus \bp{-\SFM{\fonctionprimal}{\coupling}\np{\dual} } }
        \LowPlus \bp{-\fonctionprimal\np{\primal}}
        \tag{by~\eqref{eq:Fenchel-Moreau_biconjugate} 
        and~\eqref{eq:lower_addition_sup} }
      \\
      &= \sup_{\dual \in \DUAL} 
        \bgp{ \Bp{ \coupling\np{\primal,\dual} \LowPlus 
        \bp{-\SFM{\fonctionprimal}{\coupling}\np{\dual} } } 
        \LowPlus \bp{-\fonctionprimal\np{\primal}} } 
        \tag{by~\eqref{eq:lower_addition_sup} } 
      \\
      &= \sup_{\dual \in \DUAL} 
        \bgp{ \coupling\np{\primal,\dual} \LowPlus 
        \bp{-\SFM{\fonctionprimal}{\coupling}\np{\dual} } 
        \LowPlus \bp{-\fonctionprimal\np{\primal}} } 
        \tag{by associativity of~$\LowPlus$}
      \\
      &= \sup_{\dual \in \DUAL} 
        \bgp{ \coupling\np{\primal,\dual} \LowPlus \bp{-\fonctionprimal\np{\primal}} 
        \LowPlus \bp{-\SFM{\fonctionprimal}{\coupling}\np{\dual} } 
        } 
        \tag{by commutativity of~$\LowPlus$}
      \\
      & \leq \sup_{\dual \in \DUAL} 
        \bgp{
        \sup_{\primal \in \PRIMAL} \Bp{ \coupling\np{\primal,\dual} \LowPlus \bp{-\fonctionprimal\np{\primal}} }
        \LowPlus \bp{-\SFM{\fonctionprimal}{\coupling}\np{\dual} } 
        } 
        \tag{by~\eqref{eq:lower_addition_leq} }
      \\
      &= \sup_{\dual \in \DUAL} 
        \Bp{ \SFM{\fonctionprimal}{\coupling}\np{\dual} 
        \LowPlus \bp{-\SFM{\fonctionprimal}{\coupling}\np{\dual} } } 
        \tag{by~\eqref{eq:upper-Fenchel-Moreau_conjugate} }
      \\
      & \leq 0 \eqfinp 
        \tag{by~\eqref{eq:lower_substraction_le_zero}}
    \end{align}
  \end{subequations}
  We have obtained that \( \SFMbi{\fonctionprimal}{\coupling}\np{\primal} 
  \LowPlus \bp{ -\fonctionprimal\np{\primal} } \leq 0 \). 
  Now, using~\eqref{eq:lower_upper_addition_comparisons},
  we obtain~\eqref{eq:Fenchel-Moreau_biconjugate_inequality}. 
  This ends the proof.
\end{proof}

The following properties are easy to establish.
\begin{proposition}
  For any family 
  \( \sequence{\fonctionprimal_{\control}}{\control\in\CONTROL} \) of
  functions \( \fonctionprimal_{\control} : \PRIMAL  \to \barRR \), we have that
  \begin{subequations}
    \begin{align}
      \SFM{\bp{\inf_{\control\in\CONTROL}\fonctionprimal_{\control}}}{\coupling}
\np{\primal}  
      &= \sup_{\control\in\CONTROL}\SFM{\fonctionprimal_{\control}}{\coupling}
\np{\primal} 
      \\
      - \SFM{\bp{\inf_{\control\in\CONTROL}\fonctionprimal_{\control}}}{\coupling}
\np{\primal}  
      &= \inf_{\control\in\CONTROL} \bp{-\SFM{\fonctionprimal_{\control}}{\coupling}
\np{\primal} }
        \eqfinp
        \label{eq:Fenchel-Moreau_conjugate_infimum}
    \end{align}
  \end{subequations}
\end{proposition}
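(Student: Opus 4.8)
The plan is to unfold the definition~\eqref{eq:upper-Fenchel-Moreau_conjugate} of the Fenchel-Moreau conjugate, turn the negated infimum into a supremum via $-\inf = \sup-$, and then exploit the distributivity~\eqref{eq:lower_addition_sup} of the supremum over the Moreau lower addition~$\LowPlus$ to extract the supremum over~$\control$. Only the first equality needs a genuine argument; the second one will be obtained from it by negation.

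For any $\dual \in \DUAL$, I would start from the definition and compute
\[
\SFM{\bp{\inf_{\control\in\CONTROL}\fonctionprimal_{\control}}}{\coupling}\np{\dual}
= \sup_{\primal \in \PRIMAL} \Bp{ \coupling\np{\primal,\dual} \LowPlus \sup_{\control\in\CONTROL}\bp{ -\fonctionprimal_{\control}\np{\primal} } }
\eqfinv
\]
using~\eqref{eq:upper-Fenchel-Moreau_conjugate} together with $-\inf_{\control}\fonctionprimal_{\control}\np{\primal} = \sup_{\control}\bp{-\fonctionprimal_{\control}\np{\primal}}$. The decisive step is to pull the inner supremum outside the lower addition: since the term $\coupling\np{\primal,\dual}$ is constant in~$\control$, property~\eqref{eq:lower_addition_sup} applied with a singleton index set on the left factor yields
\[
\coupling\np{\primal,\dual} \LowPlus \sup_{\control\in\CONTROL}\bp{ -\fonctionprimal_{\control}\np{\primal} }
= \sup_{\control\in\CONTROL} \Bp{ \coupling\np{\primal,\dual} \LowPlus \bp{ -\fonctionprimal_{\control}\np{\primal} } }
\eqfinp
\]
Substituting this in, exchanging the two suprema over $\primal$ and $\control$, and reading the inner supremum as a conjugate through~\eqref{eq:upper-Fenchel-Moreau_conjugate} gives
\[
\SFM{\bp{\inf_{\control\in\CONTROL}\fonctionprimal_{\control}}}{\coupling}\np{\dual}
= \sup_{\control\in\CONTROL} \sup_{\primal \in \PRIMAL} \Bp{ \coupling\np{\primal,\dual} \LowPlus \bp{ -\fonctionprimal_{\control}\np{\primal} } }
= \sup_{\control\in\CONTROL}\SFM{\fonctionprimal_{\control}}{\coupling}\np{\dual}
\eqfinv
\]
which is the first identity. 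The second identity then follows at once by negating both sides and using $-\sup = \inf-$, so that $-\sup_{\control}\SFM{\fonctionprimal_{\control}}{\coupling}\np{\dual} = \inf_{\control}\bp{-\SFM{\fonctionprimal_{\control}}{\coupling}\np{\dual}}$.

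I do not expect any real obstacle here. The one point that deserves care is the distributivity step: because the functions take values in~$\barRR$ and we use the Moreau lower addition, the identity $c \LowPlus \sup = \sup\np{c \LowPlus \cdot}$ cannot be taken for granted from ordinary arithmetic but must be invoked as the stated property~\eqref{eq:lower_addition_sup}. That property holds unconditionally for~$\LowPlus$ and~$\sup$, so no finiteness or properness hypothesis on $\coupling$ or on the family $\sequence{\fonctionprimal_{\control}}{\control\in\CONTROL}$ is required, and the remaining manipulations (negating an infimum, swapping two suprema) are routine.
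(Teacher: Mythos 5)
Your proof is correct, and it is exactly the argument the paper has in mind: the paper gives no proof at all for this proposition (it is introduced with ``the following properties are easy to establish''), and the natural route is the one you take --- unfold the definition~\eqref{eq:upper-Fenchel-Moreau_conjugate}, use $-\inf=\sup\,-$, invoke the unconditional distributivity~\eqref{eq:lower_addition_sup} of $\sup$ over~$\LowPlus$ (correctly identified as the one non-trivial step, and the reason the conjugate is defined with the \emph{lower} addition), swap the suprema, and negate for the second identity. The only cosmetic remark is that the conjugate is a function on~$\DUAL$, so your use of the argument~$\dual$ is the right reading of the statement (the paper's displayed formula writes~$\np{\primal}$, which is a typo).
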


\begin{proposition}
Let be given two ``primal'' sets $\PRIMAL$, $\PRIMALBIS$
and two ``dual'' sets $\DUAL$, $\DUALBIS$, 
together   with two coupling functions
  \begin{equation}
    \coupling : \PRIMAL \times \DUAL \to \barRR \eqsepv
    \couplingbis : \PRIMALBIS \times \DUALBIS \to \barRR \eqfinp 
  \end{equation}
  For any bivariate function 
  \( \kernel : \PRIMAL \times \PRIMALBIS \to \barRR \),
  we have that 
  \begin{equation}
    \SFM{ \Bp{ - \bp{ \primal \mapsto 
          \SFM{\kernel\np{\primal,\cdot}}{\couplingbis} }}}{\coupling}
    = \SFM{\kernel}{\coupling \LowPlus \couplingbis} 
    =   \SFM{ \Bp{ - \bp{ \primalbis \mapsto 
          \SFM{\kernel\np{\cdot,\primalbis}}{\coupling} }}}{\couplingbis}
    \eqfinp
\label{eq:composition_of_conjugates}
  \end{equation}
\end{proposition}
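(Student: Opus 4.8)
The plan is to prove each equality by a direct unfolding of the definition~\eqref{eq:upper-Fenchel-Moreau_conjugate} of the Fenchel-Moreau conjugate, collapsing the resulting nested supremum by means of the distributivity property~\eqref{eq:lower_addition_sup} of $\sup$ with respect to~$\LowPlus$. Throughout, the parameter $\dualbis$ (resp.\ $\dual$) is held fixed, so that all three expressions are understood as functions of $\np{\dual,\dualbis} \in \DUAL \times \DUALBIS$.

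First I would fix $\np{\dual,\dualbis}$ and treat the left-hand expression. Applying definition~\eqref{eq:upper-Fenchel-Moreau_conjugate} to the $\coupling$-conjugate of the function $\primal \mapsto -\SFM{\kernel\np{\primal,\cdot}}{\couplingbis}\np{\dualbis}$ gives $\sup_{\primal \in \PRIMAL} \Bp{ \coupling\np{\primal,\dual} \LowPlus \SFM{\kernel\np{\primal,\cdot}}{\couplingbis}\np{\dualbis} }$. Substituting the inner $\couplingbis$-conjugate $\SFM{\kernel\np{\primal,\cdot}}{\couplingbis}\np{\dualbis} = \sup_{\primalbis \in \PRIMALBIS} \bp{ \couplingbis\np{\primalbis,\dualbis} \LowPlus \bp{-\kernel\np{\primal,\primalbis}} }$ yields a nested supremum. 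The key step is to merge the two suprema into a single one over $\PRIMAL \times \PRIMALBIS$, which is exactly~\eqref{eq:lower_addition_sup}; after reordering the three summands by commutativity and associativity of $\LowPlus$, one obtains $\sup_{\primal \in \PRIMAL, \primalbis \in \PRIMALBIS} \Bp{ \coupling\np{\primal,\dual} \LowPlus \couplingbis\np{\primalbis,\dualbis} \LowPlus \bp{-\kernel\np{\primal,\primalbis}} }$, which is precisely $\SFM{\kernel}{\coupling \LowPlus \couplingbis}\np{\dual,\dualbis}$ by~\eqref{eq:product_conjugate}.

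The right-hand equality follows by the symmetric argument: expanding the outer $\couplingbis$-conjugate of $\primalbis \mapsto -\SFM{\kernel\np{\cdot,\primalbis}}{\coupling}\np{\dual}$ and then the inner $\coupling$-conjugate leads, via the same use of~\eqref{eq:lower_addition_sup} and the commutativity of $\LowPlus$, to the same double supremum over $\PRIMAL \times \PRIMALBIS$. I do not expect any genuine obstacle: the distributivity identity~\eqref{eq:lower_addition_sup} is an unconditional equality, valid even with $\pm\infty$ values, so no finiteness hypothesis on $\coupling$, $\couplingbis$ or $\kernel$ is required. The only point demanding a little care is the bookkeeping of the associativity and commutativity of $\LowPlus$ when reordering the three terms inside the supremum.
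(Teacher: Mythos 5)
Your proof is correct: the paper states this proposition without proof (treating it as an easy consequence of the definitions), and your direct unfolding of~\eqref{eq:upper-Fenchel-Moreau_conjugate} followed by the merging of the nested suprema via the unconditional equality~\eqref{eq:lower_addition_sup}, together with associativity and commutativity of~$\LowPlus$, is exactly the intended argument. You are also right that no finiteness assumptions are needed, since~\eqref{eq:lower_addition_sup} and the involutivity of $u\mapsto -u$ on $\barRR$ hold without restriction.
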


\newcommand{\noopsort}[1]{} \ifx\undefined\allcaps\def\allcaps#1{#1}\fi


\begin{thebibliography}{10}

\bibitem{MR2458436}
J.-P. Aubin and H.~Frankowska.
\newblock {\em Set-valued analysis}.
\newblock Modern Birkh\"auser Classics. Birkh\"auser Boston, Inc., Boston, MA,
  2009.

\bibitem{MR3616647}
H.~H. Bauschke and P.~L. Combettes.
\newblock {\em Convex analysis and monotone operator theory in {H}ilbert
  spaces}.
\newblock CMS Books in Mathematics/Ouvrages de Math\'ematiques de la SMC.
  Springer-Verlag, second edition, 2017.

\bibitem{Bertsekas-Shreve:1996}
D.~P. Bertsekas and S.~E. Shreve.
\newblock {\em Stochastic Optimal Control: The Discrete-Time Case}.
\newblock Athena Scientific, Belmont, Massachusetts, 1996.

\bibitem{Cabot-Jourani-Thibault:2018}
A.~Cabot, A.~Jourani, and L.~Thibault.
\newblock Envelopes for sets and functions {II}: generalized polarity and
  conjugacy, 2018.
\newblock preprint.

\bibitem{Carpentier-Chancelier-Cohen-DeLara:2015}
P.~Carpentier, J.-P. Chancelier, G.~Cohen, and M.~{De Lara}.
\newblock {\em Stochastic Multi-Stage Optimization. At the Crossroads between
  Discrete Time Stochastic Control and Stochastic Programming}.
\newblock Springer-Verlag, Berlin, 2015.

\bibitem{girardeau2014convergence}
P.~Girardeau, V.~Leclere, and A.~B. Philpott.
\newblock On the convergence of decomposition methods for multistage stochastic
  convex programs.
\newblock {\em Mathematics of Operations Research}, 40(1):130--145, 2014.

\bibitem{MR2279394}
V.~L. Levin.
\newblock Abstract convexity and the {M}onge-{K}antorovich duality.
\newblock In {\em Generalized convexity and related topics}, volume 583 of {\em
  Lecture Notes in Econom. and Math. Systems}, pages 33--72. Springer, Berlin,
  2007.

\bibitem{Loeve:1977}
M.~Lo\`eve.
\newblock {\em Probability Theory I}.
\newblock Springer-Verlag, New York, fourth edition, 1977.

\bibitem{Martinez-Legaz:2005}
J.~E. {Mart\'{\i}nez-Legaz}.
\newblock Generalized convex duality and its economic applications.
\newblock In S.~S. Hadjisavvas~N., Komlósi~S., editor, {\em Handbook of
  Generalized Convexity and Generalized Monotonicity. Nonconvex Optimization
  and Its Applications}, volume~76, pages 237--292. Springer-Verlag, 2005.

\bibitem{Moreau:1970}
J.~J. Moreau.
\newblock Inf-convolution, sous-additivit\'e, convexit\'e des fonctions
  num\'eriques.
\newblock {\em J. Math. Pures Appl. (9)}, 49:109--154, 1970.

\bibitem{MR1442257}
D.~Pallaschke and S.~Rolewicz.
\newblock {\em Foundations of mathematical optimization}, volume 388 of {\em
  Mathematics and its Applications}.
\newblock Kluwer Academic Publishers Group, Dordrecht, 1997.

\bibitem{Pennanen-Ari-Pekka:2018}
T.~Pennanen and A.-P. Perkki{\"o}.
\newblock Shadow price of information in discrete time stochastic optimization.
\newblock {\em Mathematical Programming}, 168(1):347--367, Mar 2018.

\bibitem{rockafellar1968integrals}
R.~Rockafellar.
\newblock Integrals which are convex functionals.
\newblock {\em Pacific J. Math.}, 24(3):525--539, 1968.

\bibitem{MR0187062}
R.~T. Rockafellar.
\newblock Extension of {F}enchel's duality theorem for convex functions.
\newblock {\em Duke Math. J.}, 33:81--89, 1966.

\bibitem{Rockafellar1971integrals}
R.~T. Rockafellar.
\newblock Integrals which are convex functionals, {II}.
\newblock {\em Pacific J. Math.}, 39(2):439--469, 1971.

\bibitem{Rockafellar-Wets:1998}
R.~T. Rockafellar and R.~J.-B. Wets.
\newblock {\em Variational Analysis}.
\newblock Springer-Verlag, Berlin, 1998.

\bibitem{Rockafellar-Wolenski:2001b}
R.~T. Rockafellar and P.~R. Wolenski.
\newblock Envelope representations in hamilton-jacobi theory for fully convex
  problems of control.
\newblock In {\em Proceedings of the 40th IEEE Conference on Decision and
  Control (Cat. No.01CH37228)}, volume~3, pages 2768--2771, 2001.

\bibitem{MR1834382}
A.~Rubinov.
\newblock {\em Abstract convexity and global optimization}, volume~44 of {\em
  Nonconvex Optimization and its Applications}.
\newblock Kluwer Academic Publishers, Dordrecht, 2000.

\bibitem{Santambrogio:2015}
F.~Santambrogio.
\newblock {\em Optimal Transport for Applied Mathematicians: Calculus of
  Variations, PDEs, and Modeling}.
\newblock Progress in Nonlinear Differential Equations and Their Applications.
  Birkh{\"a}user, 2015.

\bibitem{MR2746854}
A.~Shapiro.
\newblock Analysis of stochastic dual dynamic programming method.
\newblock {\em European J. Oper. Res.}, 209(1):63--72, 2011.

\bibitem{MR1461544}
I.~Singer.
\newblock {\em Abstract convex analysis}.
\newblock Canadian Mathematical Society Series of Monographs and Advanced
  Texts. John Wiley \& Sons, Inc., New York, 1997.

\bibitem{Volle:1983}
M.~Volle.
\newblock Multiapplications duales et probl\`emes d'optimisation en dualit\'e.
\newblock {\em Comptes Rendus de l'Acad\'emie des Sciences Paris}, S\'erie I(t.
  296):11--14, 1983.

\end{thebibliography}
\end{document}